
\documentclass[11pt,twoside]{amsart}


\textwidth=175mm
\textheight=220mm
\topmargin=0mm
\topskip 0mm 
\oddsidemargin=-5mm
\evensidemargin=-5mm
\headheight=0mm
\headsep=5mm 
\footskip 5mm
\numberwithin{equation}{section}
\hyphenation{semi-stable}
\setlength{\parskip}{3pt}

\hyphenation{mo-no-mials}


\usepackage{latexsym,amssymb,amsmath,amsthm,amsfonts}
\usepackage{graphicx}
\usepackage[table]{xcolor}
\usepackage{tikz}
\usetikzlibrary{matrix,arrows}
\definecolor{VerdeOlivo}{rgb}{0.3,0.5,0.1}
\definecolor{Magenta}{rgb}{.65,0.15,.2}
\newcommand{\bff}[1]{{\bf #1}}


\newtheorem{Theorem}{Theorem}[section] 
\newtheorem{Definition}[Theorem]{Definition} 
\newtheorem{Lemma}[Theorem]{Lemma} 
\newtheorem{Corollary}[Theorem]{Corollary} 
\newtheorem{Conjecture}[Theorem]{Conjecture} 
\newtheorem{Remark}[Theorem]{Remark} 
\newtheorem{Example}[Theorem]{Example} 
\newtheorem{Proposition}[Theorem]{Proposition}

\newtheorem*{Theorem36}{Theorem 3.6}
\newtheorem*{Theorem38}{Theorem 3.8}

\newtheorem*{Corollary39}{Corollary 3.9}
\definecolor{Gris}{gray}{0.3}


\begin{document}


\title[Critical ideals of trees]{Critical ideals of trees}


\author{Hugo Corrales}
\address{
Escuela Superior de Economia\\
Plan de Agua Prieta No. 66\\
Col. Plutarco El\'ias Calles\\
11340 Ciudad de M\'exico, M\'exico.
}
\email[H. ~Corrales]{hhcorrales@gmail.com}

\author{Carlos E. Valencia}
\address{
Departamento de
Matem\'aticas\\
Centro de Investigaci\'on y de Estudios Avanzados del
IPN\\
Apartado Postal
14--740 \\
07000 Ciudad de M\'exico, M\'exico.
} 
\email[C. ~Valencia\footnote{Corresponding author}]{cvalencia@math.cinvestav.edu.mx, cvalencia75@gmail.com}

\thanks{The authors were partially supported by SNI}

\keywords{Critical ideal, tree, $2$-matching, Gr\"obner bases, critical group}
\subjclass[2010]{Primary 13F20; Secondary 13P10, 05C05, 05C70, 05C50, 13C40.}



\begin{abstract}
Given a graph $G=(V, E)$, its generalized Laplacian matrix is given by 
\[
L(G,X_G)_{u,v}=
\begin{cases}
x_u&\text{if }u=v,\\
-m_{uv}&\text{if }u\neq v,
\end{cases}
\]
where $X_G=\{x_u\, | \, u\in V(G)\}$ is a set of indeterminates and $m_{uv}$ is the number of edges between $u$ and $v$.
The $j$-critical ideal of $G$ is the determinantal ideal generated by the minors of size $j$ of $L(G, X)$.
A $2$-matching of $G$ is a subset $\mathcal{M}$ of its edges such that every vertex of $G$ has at most two incident edges in $\mathcal{M}$. 
We give a combinatorial description of a set of generators of the $j$-critical ideal of a tree $T$ as a function of a set of special $2$-matchings, 
which we called minimal, of the graph $T^\ell$ obtained from $T$ by adding a loop at each of its vertices.
Also, we prove that the algebraic co-rank of $T$ is equal to the $2$-matching number of $T$, the maximum number of edges of a $2$-matching of $T$.
As a consequence, one can compute each invariant factor of the critical group of any graph $G$ such that $G\setminus v$ is a tree for some of its 
vertices $v$, as the greatest common divisor of the evaluation of some polynomials associated to the minimal $2$-matchings of $T^\ell$. 
For instance, in the regular case, we recover some of the results obtained by Levine~\cite{Levine} and Toumpakari~\cite{Toumpakary} about the critical group of a wired regular tree. 
Additionally, we can prove that the path $P_n$ is the unique simple graph with $n$ vertices and $n-1$ trivial critical ideals.
We conjecture that the set of generators that we give is a reduced Gr\"obner basis and we can prove this for the $|V(T)|-1$-critical ideal.
Finally, we apply the result in order to calculate the critical ideals of trees with depth two and some arithmetical trees associated to the reduction of elliptic curves of Kodaira type $I_n^*$.
\end{abstract}

\maketitle

\section{Introduction} 

The critical ideals of a graph were introduced in~\cite{Corrales} as a generalization of the critical group 
and the characteristic polynomial of the adjacency and Laplacian matrices of a graph. 
Critical ideals have been shown to be very useful.
For instance, in~\cite{Alfaro},  critical ideals were used to classify the simple graphs whose critical group has two of their invariant factors equal to one.
Also, in~\cite{AV} and~\cite{AVE}, they were  used to classify the simple graphs whose critical group 
has three of their invariant factors equal to one and bounded clique number.

Given a graph $G=(V,E)$, let $L(G,X_G)$ be the \emph{generalized Laplacian matrix} of $G$, which is given by
\[
L(G,X_G)_{u,v}=
\begin{cases}
x_u&\text{if }u=v,\\
-m_{uv}&\text{if }u\neq v,
\end{cases}
\]
where $X_G=\{x_u|u\in V(G)\}$ is the set of indeterminates indexed by the vertices of $G$ and $m_{uv}$ is the number of edges between $u$ and $v$.
For any $1\leq j\leq n$, the $j$-critical ideal of $G$ is given by
\[
I_j(G,X_G)=\big\langle\, j\textrm{-minors of } L(G,X_G)\big\rangle\subseteq \mathbb{Z}[X_G].
\]
The critical group of $G$, denoted by $K(G)$, is the torsion part of the cokernel of the matrix $L(G, \bff{d}_G)$ obtained from $L(G,X_G)$ 
by evaluating $X_G$ at the degree vector $\bff{d}_G$ of $G$.
Is not difficult to see that the critical group of $G$ can be obtained as an evaluation of a set of generators of its critical ideals, see~\cite[Proposition 3.6]{Corrales}.
Moreover, all the information of the critical group of $G$ is contained in its critical ideals.
For instance, if 
\[
\gamma(G)=\textrm{max}\{j\, | \, I_j(G,X)=\mathbb{Z}[X_G]\},
\] 
then $\textrm{rank}(K(G))\leq n-1-\gamma(G)$.
In contrast with the critical group of a graph, is easier to relate some combinatorial invariants of the graph to its critical ideals. 
For instance, \cite[Theorem 3.13]{Corrales} asserts that $\gamma(G)\leq\textrm{min}(2n-\alpha(G),2n-\omega(G)-1)$,
where $\alpha(G)$ and $\omega(G)$ are the stability and the clique numbers of $G$, respectively. 
In a similar way, the results obtained in~\cite{Alfaro} about the characterization of the connected graphs with $\gamma(G)\leq 2$ 
and with two invariant factors of the critical group of a graph suggest a most evident role of the combinatorial structure of $G$ for the critical ideals of a graph. 
Critical ideals are a very powerful tool to understood the critical group of a graph.
Moreover, the critical ideals of only one graph say something about the critical group of a big family of graphs.
For instance, the critical group of an arithmetical graph can be computed as an evaluation of the critical ideals of the base graph of the arithmetical graph.
Also, if $H$ is obtained from a graph $G$ by duplicating or replicating its vertices, 
then much information about the critical ideals and critical group of $H$ is contained in the critical ideals of $G$, see~\cite{twin}.

This paper focuses mainly on giving an explicit description of a set of generators of the critical ideals of a tree $T$, a connected graph without cycles. 
Our description of these critical ideals is based on the set of $2$-matchings of $T^\ell$,
the (non-simple) graph obtained from $T$ by adding a loop at each vertex of $T$. 
More precisely, a $2$-matching of a graph $G$ is a subset $\mathcal{M}$ of its edges 
such that every vertex of $G$ has at most two incident edges in $\mathcal{M}$.
We can think of a $2$-matching as a disjoint union of paths, where we are considering a loop as a path with only one vertex.
To each $2$-matching $\mathcal{M}$ of $T^\ell$ we associate a minor of $L(T,X_T)$, denoted by $d(\mathcal{M},X)$, 
in such a way that if $|\mathcal{M}|=j$, then $d(\mathcal{M},X)$ is a $j$-minor. 
This association leads to our main result.
\begin{Theorem36}
Let $1\leq j\leq n$, $T$ be a tree with $n$ vertices, and $\mathcal{V}_2^*(T^\ell,j)$ 
be the set of minimal $2$-matchings of $T^\ell$ of size $j$ (see Definition~\ref{defbas}).
Then 
\[
I_j(T,X_T)=\left\langle \,d(\mathcal{M},X)\,\Big| \, \mathcal{M}\in \mathcal{V}_2^*(T^\ell,j)\right\rangle.
\]
\end{Theorem36}
Moreover, we conjecture that $\{d(\mathcal{M},X)\,\Big| \, \mathcal{M}\in \mathcal{V}_2^*(T^\ell,j)\}$ 
is a reduced Gr\"obner basis for $I_j(T,X_T)$, see Conjecture~\ref{conj}.
We can prove this for the $n-1$-critical ideal, see Theorem~\ref{Groebner}.
In general, due to the complexity of the relations of the generators is very difficult to prove that 
$\{d(\mathcal{M},X)\,\Big| \, \mathcal{M}\in \mathcal{V}_2^*(T^\ell,j)\}$ is a reduced Gr\"obner basis for $I_j(T,X_T)$.

On the other hand, let $\gamma(G)=\textrm{max}\{j\,|\,I_j(G,X)=\mathbb{Z}[X_G]\}$ be the algebraic corank of $G$.
Is not difficult to see that if $G$ has $n$ vertices, then the critical group of $G$ has at most $n-1-\gamma(G)$ non trivial invariant factors.
A remarkable consequence of Theorem~\ref{TeoRed} is the characterization of the algebraic corank of a tree in terms of its combinatorics. 
If we set $\nu_2(G)$ as the maximum size of a $2$-matching on $G$, then we get the following result:
\begin{Theorem38}
If $T$ is a tree, then $\gamma(T)=\nu_2(T)$.
\end{Theorem38}

This result led us to prove Conjecture 4.12 given in~\cite{Corrales}.
\begin{Corollary39}
If $G$ is a simple graph with $n$ vertices, then $\gamma(G)=n-1$ if and only if $G=P_n$.
\end{Corollary39}

Although it seems that the set of trees is a restricted class of graphs, the calculations presented here 
can be applied for the calculation of the critical group of an important family of graphs, one of the largest so far.
In fact so far the family of trees is the largest family of graphs for which it has been able to calculate their critical ideals.

This article is organized as follows: In Section~\ref{matchings}, we introduce the concept of $2$-matching and present some of their basic properties, 
which be useful for establishing the main result of this paper.
In Section~\ref{trees} we establish the correspondence between $2$-matchings of $T^\ell$ and minors of $L(T,X_T)$ and illustrate it with several examples. 
After doing this we focus on the algebraic relations between the minors associated to $2$-matchings. 
In Section~\ref{grobner} we prove that the minors associated to the minimal $2$-matching of $T^\ell$ form a reduced Gr\"obner basis for the $n-1$ critical ideals of $T$.

Finally, Section~\ref{applications} is devoted to presenting three applications of the results obtained in the previous sections in the computation of the critical ideals and critical groups of trees.
Firstly we present some arithmetical trees associated to the reduction of elliptic curves of Kodaira type $I_n^*$. 
In the next subsection we study the critical ideals of the graph obtained from a regular tree by collapsing the leaves to a single vertex and 
obtain some results obtained by Levine~\cite{Levine} and Toumpakary~\cite{Toumpakary} about the critical groups of wired trees. 
Thirdly, we describe the critical ideals of all the trees with depth two.


\section{2-matchings of trees}\label{matchings}
In this section we introduce the concept of a $2$-matching of a graph, which plays an important role throughout this paper.
After that, we present some of its properties when the graph is a tree, which will be very useful for giving a description of its critical ideals.

\begin{Definition}
Let $G$ be a graph (possibly with loops and multiple edges) and $\mathcal{M}$ be a set of edges of $G$.
We say that $\mathcal{M}$ is a \emph{$2$-matching} if every vertex of $G$ has at most two incident edges in $\mathcal{M}$.
\end{Definition}
It is important to note that a loop $vv$ is counted twice as an incident edge of $v$.
The set of all $2$-matchings of a graph $G$ will be denoted by $\mathcal{V}_2(G)$.
Moreover, let $\mathcal{V}_2(G,j)$ be the set of $2$-matchings of $G$ of size $j$, that is, with $j$ edges.
Also, the \emph{$2$-matching number} of $G$, denoted by $\nu_2(G)$, is the maximum number of edges of a $2$-matching of $G$.
A \emph{maximum} $2$-matching of $G$ is a $2$-matching of $G$ of size $\nu_2(G)$.

The concept of a $2$-matching applies to any class of graphs, however in this chapter we are primarily interested in the case when $G$ is a tree.
If $T$ is a tree, then is not difficult to see that its $2$-matchings consist of a disjoint union of paths.
We recall that a loop is a path of length zero.
For instance, let $\mathcal{C}$ be the tree given in Figure~\ref{cat}.a.
If we take (see Figure~\ref{cat}.b and~\ref{cat}.c)
\[
\mathcal{M}_1=\{v_1v_2,v_2v_5,v_6v_7,v_6v_8\}\quad\textrm{and}\quad\mathcal{M}_2=\{v_1v_2,v_2v_3,v_2v_4,v_6v_8\},
\]
then $\mathcal{M}_1\in\mathcal{V}_2(\mathcal{C},4)$ and $\mathcal{M}_2\not\in\mathcal{V}_2(\mathcal{C})$ because $\mathcal{M}_2$ has $3$ incident edges on $v_2$.
\begin{figure}[h!]
\begin{center}
\begin{tabular}{c@{\extracolsep{10mm}}c@{\extracolsep{10mm}}c}
\begin{tikzpicture}
\draw {
	(0,0) node[draw, circle, fill=gray, inner sep=0pt, minimum width=4pt] (v5) {}
	(-1,0) node[draw, circle, fill=gray, inner sep=0pt, minimum width=4pt] (v2) {}
	(-2,0) node[draw, circle, fill=gray, inner sep=0pt, minimum width=4pt] (v1) {}
	(1,0) node[draw, circle, fill=gray, inner sep=0pt, minimum width=4pt] (v6) {}
	(2,0) node[draw, circle, fill=gray, inner sep=0pt, minimum width=4pt] (v9) {}
	(-1,1) node[draw, circle, fill=gray, inner sep=0pt, minimum width=4pt] (v3) {}
	(-1,-1) node[draw, circle, fill=gray, inner sep=0pt, minimum width=4pt] (v4) {}
	(1,1) node[draw, circle, fill=gray, inner sep=0pt, minimum width=4pt] (v7) {}
	(1,-1) node[draw, circle, fill=gray, inner sep=0pt, minimum width=4pt] (v8) {}
	
	(v1) to (v2) 
	(v2) to (v3) 
	(v2) to (v4) 
	(v2) to (v5) 
	(v5) to (v6) 
	(v6) to (v7) 
	(v6) to (v8) 
	(v6) to (v9)
	
	(v1)+(0.2,0.2) node {$v_1$}
	(v2)+(0.2,0.2) node {$v_2$}
	(v3)+(0.2,0.2) node {$v_3$}
	(v4)+(0.2,0.2) node {$v_4$}
	(v5)+(0.2,0.2) node {$v_5$}
	(v6)+(0.2,0.2) node {$v_6$}
	(v7)+(0.2,0.2) node {$v_7$}
	(v8)+(0.2,0.2) node {$v_8$}
	(v9)+(0.2,0.2) node {$v_9$}
};
\end{tikzpicture}
&
\begin{tikzpicture}
\draw {	
	(0,0) node[draw, circle, fill=gray, inner sep=0pt, minimum width=4pt] (v51) {}
	(-1,0) node[draw, circle, fill=gray, inner sep=0pt, minimum width=4pt] (v21) {}
	(-2,0) node[draw, circle, fill=gray, inner sep=0pt, minimum width=4pt] (v11) {}
	(1,0) node[draw, circle, fill=gray, inner sep=0pt, minimum width=4pt] (v61) {}
	(2,0) node[draw, circle, fill=gray, inner sep=0pt, minimum width=4pt] (v91) {}
	(-1,1) node[draw, circle, fill=gray, inner sep=0pt, minimum width=4pt] (v31) {}
	(-1,-1) node[draw, circle, fill=gray, inner sep=0pt, minimum width=4pt] (v41) {}
	(1,1) node[draw, circle, fill=gray, inner sep=0pt, minimum width=4pt] (v71) {}
	(1,-1) node[draw, circle, fill=gray, inner sep=0pt, minimum width=4pt] (v81) {}
	
	(v11) to (v21) 
	(v21) to (v51) 
	(v61) to (v71) 
	(v61) to (v81)
	(v21) edge[dashed] (v31) 
	(v21) edge[dashed] (v41) 
	(v51) edge[dashed] (v61) 
	(v61) edge[dashed] (v91)
	
	(v11)+(0.2,0.2) node {$v_1$}
	(v21)+(0.2,0.2) node {$v_2$}
	(v31)+(0.2,0.2) node {$v_3$}
	(v41)+(0.2,0.2) node {$v_4$}
	(v51)+(0.2,0.2) node {$v_5$}
	(v61)+(0.2,0.2) node {$v_6$}
	(v71)+(0.2,0.2) node {$v_7$}
	(v81)+(0.2,0.2) node {$v_8$}
	(v91)+(0.2,0.2) node {$v_9$}
};
\end{tikzpicture}
&
\begin{tikzpicture}
\draw {	
	(0,0) node[draw, circle, fill=gray, inner sep=0pt, minimum width=4pt] (v52) {}
	(-1,0) node[draw, circle, fill=gray, inner sep=0pt, minimum width=4pt] (v22) {}
	(-2,0) node[draw, circle, fill=gray, inner sep=0pt, minimum width=4pt] (v12) {}
	(1,0) node[draw, circle, fill=gray, inner sep=0pt, minimum width=4pt] (v62) {}
	(2,0) node[draw, circle, fill=gray, inner sep=0pt, minimum width=4pt] (v92) {}
	(-1,1) node[draw, circle, fill=gray, inner sep=0pt, minimum width=4pt] (v32) {}
	(-1,-1) node[draw, circle, fill=gray, inner sep=0pt, minimum width=4pt] (v42) {}
	(1,1) node[draw, circle, fill=gray, inner sep=0pt, minimum width=4pt] (v72) {}
	(1,-1) node[draw, circle, fill=gray, inner sep=0pt, minimum width=4pt] (v82) {}
	
	(v12) to (v22) 
	(v22) to (v32) 
	(v22) to (v42) 
	(v62) to (v82)
	(v22) edge[dashed] (v52) 
	(v52) edge[dashed] (v62) 
	(v62) edge[dashed] (v72) 
	(v62) edge[dashed] (v92)
	
	(v12)+(0.2,0.2) node {$v_1$}
	(v22)+(0.2,0.2) node {$v_2$}
	(v32)+(0.2,0.2) node {$v_3$}
	(v42)+(0.2,0.2) node {$v_4$}
	(v52)+(0.2,0.2) node {$v_5$}
	(v62)+(0.2,0.2) node {$v_6$}
	(v72)+(0.2,0.2) node {$v_7$}
	(v82)+(0.2,0.2) node {$v_8$}
	(v92)+(0.2,0.2) node {$v_9$}	
	};
\end{tikzpicture}\\
$(a)$ A caterpillar tree $\mathcal{C}$ & $(b)$ A $2$-matching of $\mathcal{C}$  & $(c)$ A non $2$-matching of $\mathcal{C}$
\end{tabular}
\caption{A caterpillar tree without two pairs of legs.}
\label{cat}
\end{center}
\end{figure}
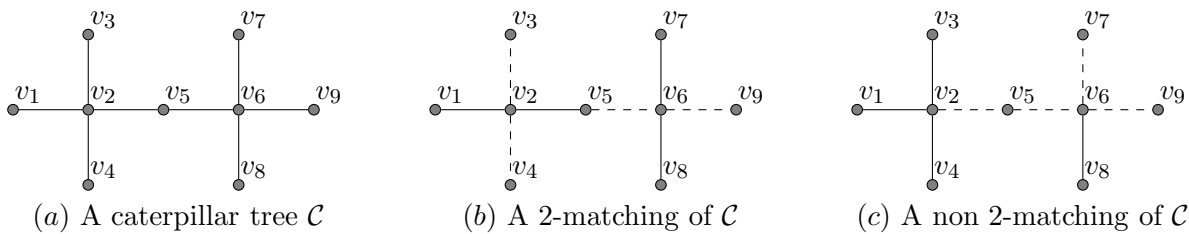

Now, we focus our attention on a special set of the $2$-matchings, the maximal ones.
\begin{Definition}\label{2matmax}
A $2$-matching $\mathcal{M}$ of a graph $G$ is called \emph{maximal} if there is no $2$-matching $\mathcal{N}$ such that $\mathcal{M}\subsetneq\mathcal{N}$. 
\end{Definition}

Note that a $2$-matching with size $\nu_2(G)$ is immediately maximal, but a maximal $2$-matching can have less than $\nu_2(G)$ edges. 
Even more, two maximal $2$-matchings can have different sizes.
\medskip

Is not difficult to check that the $2$-matching $\mathcal{M}_1$ (see Figure~\ref{cat}.b) is maximal.
The maximal $2$-matchings will play an important role in the description of the critical ideals of a tree.
In what follows, we will present the first property of the maximal $2$-matching of a tree.
Given two vertices $u,v$ of a tree $T$, let $P_{u,v}$ be the unique path in $T$ that joins $u$ and $v$.

\begin{Proposition}
If $\mathcal{M}$ is a maximal $2$-matching of a tree $T$, then there are $u\neq v$ leaves of $T$ such that $E(P_{u,v})\subseteq \mathcal{M}$.
\end{Proposition}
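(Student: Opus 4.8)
I would argue by contradiction, using that a $2$-matching of a tree is a disjoint union of paths.

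First I would fix the language and reduce the claim. Since $T$ is a tree, the spanning subgraph $(V(T),\mathcal{M})$ is a vertex-disjoint union of paths $P_1,\dots,P_s$, some possibly trivial (a single vertex); every vertex lies on exactly one $P_i$ and $\mathcal{M}=\bigcup_i E(P_i)$. We may assume $|V(T)|\ge 2$, so $\varnothing$ is not maximal and at least one $P_i$ is nontrivial. Call a vertex \emph{saturated} if two edges of $\mathcal{M}$ meet it; such a vertex is necessarily an interior vertex of its path $P_i$, which then has at least two edges. The useful reformulation of maximality is: for every $e=uv\in E(T)\setminus\mathcal{M}$ at least one of $u,v$ is saturated, for otherwise $\mathcal{M}\cup\{e\}$ would be a strictly larger $2$-matching. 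Finally, it is enough to produce a nontrivial $P_i=(x_0,\dots,x_k)$, $k\ge 1$, with $\deg_T(x_0)=\deg_T(x_k)=1$: then $x_0\ne x_k$ are leaves of $T$, and since $T$ is a tree the path $P_i$ equals $P_{x_0,x_k}$, whence $E(P_{x_0,x_k})=E(P_i)\subseteq\mathcal{M}$.

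The heart of the proof is the following. Assume no such $P_i$ exists; then every nontrivial $P=(x_0,\dots,x_k)$ has an endpoint $x\in\{x_0,x_k\}$ with $\deg_T(x)\ge 2$, so it has a $T$-edge $xw$ with $w\notin V(P)$ (and $xw\notin\mathcal{M}$, since $x$ already meets the matching edge of $P$ at $x$). As $x$ is a path-endpoint it is not saturated, so maximality applied to $xw$ forces $w$ to be saturated; hence $w$ is an interior vertex of a nontrivial path, which I call $f(P)$, and I record the edge $e_P:=xw$. Then $f(P)\ne P$, and $f$ is a fixed-point-free self-map of the finite nonempty set $\mathcal{S}$ of nontrivial paths; iterating it, $\mathcal{S}$ contains a cycle $P_1\to P_2\to\cdots\to P_r\to P_1$ with $r\ge 2$ and the $P_i$ pairwise distinct. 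Now $e_{P_1},\dots,e_{P_r}$ are pairwise distinct and belong to no $E(P_j)$, because $e_{P_i}$ meets $P_i$ at a terminal vertex and $P_{i+1}$ at an interior vertex while the $P_j$ are vertex-disjoint. Therefore the subgraph of $T$ with vertex set $V(P_1)\cup\cdots\cup V(P_r)$ and edge set $E(P_1)\cup\cdots\cup E(P_r)\cup\{e_{P_1},\dots,e_{P_r}\}$ is connected and has $\sum_i(|V(P_i)|-1)+r=\sum_i|V(P_i)|$ edges on $\sum_i|V(P_i)|$ vertices, so it contains a cycle --- contradicting that $T$ is a tree. (Equivalently: contracting each $P_i$ to a point makes $T$ into a tree in which $P_1,\dots,P_r$ would form a cycle.)

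The step I expect to carry the weight is the implication ``a non-leaf endpoint $x$ of an $\mathcal{M}$-path has a non-matching neighbour $w$, which maximality forces to be saturated, hence interior to another $\mathcal{M}$-path'': this is exactly what makes $f$ well defined and valued in the finite set $\mathcal{S}$, after which the tree-has-no-cycle contradiction is automatic. The remaining work is routine bookkeeping: checking the $e_{P_i}$ are mutually distinct and distinct from the internal path edges (so that the edge count of the exhibited subgraph really forces a cycle), and setting aside the degenerate one-vertex tree.
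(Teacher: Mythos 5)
Your proof is correct, but it follows a genuinely different route from the paper's. The paper proves the proposition by induction on the number of vertices: it deletes a leaf $a$ with pendant edge $e=ab$, distinguishes the cases $e\notin\mathcal{M}$ and $e\in\mathcal{M}$ (claiming that $\mathcal{M}$, respectively $\mathcal{M}\setminus e$, is a maximal $2$-matching of $T\setminus a$), applies the induction hypothesis to get leaves $u\neq v$ of $T\setminus a$, and lifts them back to $T$, replacing $b$ by $a$ when $b\in\{u,v\}$. You instead argue globally and by contradiction: you use the decomposition of $\mathcal{M}$ into vertex-disjoint paths, reformulate maximality as ``every edge of $E(T)\setminus\mathcal{M}$ has a saturated endpoint,'' and show that if no matching path joined two leaves of $T$, then sending each nontrivial path to the path containing the saturated neighbour of a non-leaf endpoint would give a fixed-point-free self-map of a finite set, whose cycle yields a subgraph of $T$ with as many edges as vertices, hence a cycle in the tree. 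Your argument is longer and needs the (routine) distinctness bookkeeping for the connecting edges $e_{P_i}$, but it is self-contained, makes the role of saturation explicit (which is exactly the notion the paper exploits in its later deletion lemmas), and sidesteps the delicate points of the leaf-deletion induction — in particular, that $\mathcal{M}\setminus e$ need not literally remain maximal in $T\setminus a$ and that the leaves supplied by the induction must be checked to be leaves of $T$, issues the paper's shorter proof passes over quickly. One small polish: the cleanest reason that $xw\notin\mathcal{M}$ is that an edge of $\mathcal{M}$ cannot join $V(P)$ to its complement, since $P$ is a whole connected component of $(V(T),\mathcal{M})$; this is implicit in your setup and immediate to state.
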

\begin{proof}
The proof follows by induction on the number of vertices of $T$. 
Is clear that the result is true for trees with three or less vertices.

Now, assume that the result is true for all the trees with $k$ or less vertices.
Let $T$ be a tree with $k+1$ vertices, $\mathcal{M}$ a maximal $2$-matching of $T$, $a$ a leaf of $T$, and $e=ab$ the edge of $T$ incident with $a$.
If $e\notin \mathcal{M}$, then $\mathcal{M}$ is a maximal $2$-matching of $T\setminus a$ and the result follows by the induction hypothesis.
On the other hand, if $e\in \mathcal{M}$, then $\mathcal{M}\setminus e$ is a maximal $2$-matching of $T\setminus a$.
Now, by the induction hypothesis, let there be $u\neq v$ leaves of $T\setminus a$ such that $E(P_{u,v})\subseteq \mathcal{M}\setminus e$.
If $b\neq u,v$, then the result follows. 
Otherwise, if $b=u$, then $a$ and $v$ are leaves of $T$ such that $E(P_{u,v})\subseteq \mathcal{M}$.
\end{proof}

In the following, we study the $2$-matching number of a tree when we delete one of its edges or vertices, 
in order to get a recursive way to calculate the set of its $2$-matchings and its $2$-matching number.
Before presenting the result, we introduce some concepts.
Given a tree $T$ and a vertex $v$, we say that $v$ is \emph{saturated} if any maximum $2$-matching of $T$ has two edges incident to it.
In a similar way, we say that an edge $e$ of $T$ is saturated when it appears in all the maximum $2$-matchings of $T$.

\begin{Lemma}\label{T-e}
If $T$ is a tree without loops and $e=uv$ is an edge of $T$, then
\[
\nu_2(T)-\nu_2(T\setminus e)=
\begin{cases}
0&\text{if and only if }u\text{ or }v\text{ are saturated in } T\setminus e,\\
1&\text{ if and only if } e \text{ is saturated in } T.
\end{cases}
\]
\end{Lemma}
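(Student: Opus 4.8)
The plan is to establish a clean dichotomy: when we delete $e=uv$ from $T$, every $2$-matching of $T\setminus e$ is still a $2$-matching of $T$, so $\nu_2(T\setminus e)\leq\nu_2(T)$; conversely, given a maximum $2$-matching $\mathcal{M}$ of $T$, deleting $e$ from $\mathcal{M}$ (if present) costs at most one edge, so $\nu_2(T)-\nu_2(T\setminus e)\in\{0,1\}$. Thus exactly one of the two cases holds, and it suffices to prove one of the two biconditionals; I would prove the "$=1$" case, namely $\nu_2(T)-\nu_2(T\setminus e)=1$ if and only if $e$ is saturated in $T$, and note the "$=0$" case follows by negation once it is paired with the correct combinatorial condition.

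For the "if" direction of the "$=1$" case: suppose $e$ is saturated, i.e. $e$ lies in every maximum $2$-matching of $T$. Then no maximum $2$-matching of $T$ avoids $e$, so $\nu_2(T\setminus e)<\nu_2(T)$, forcing $\nu_2(T)-\nu_2(T\setminus e)=1$ by the bound above. For the "only if" direction: suppose $\nu_2(T)-\nu_2(T\setminus e)=1$. If some maximum $2$-matching $\mathcal{M}$ of $T$ did not contain $e$, then $\mathcal{M}$ would be a $2$-matching of $T\setminus e$ of size $\nu_2(T)$, contradicting $\nu_2(T\setminus e)=\nu_2(T)-1$; hence $e$ is in every maximum $2$-matching of $T$, i.e. $e$ is saturated. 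This half is essentially formal.

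The substantive content is the "$=0$" case, i.e. showing that $\nu_2(T)=\nu_2(T\setminus e)$ if and only if $u$ or $v$ is saturated in $T\setminus e$. For the "if" direction: suppose $u$ is saturated in $T\setminus e$ and take a maximum $2$-matching $\mathcal{N}$ of $T\setminus e$; since $\mathcal{N}$ already has two edges incident to $u$, we cannot add $e$ without violating the $2$-matching condition at $u$, which is the obstruction to "improving" $\mathcal{N}$ — and in fact one shows $\mathcal{N}$ is already maximum in $T$, so $\nu_2(T)=\nu_2(T\setminus e)$. (Here one must argue that no other rearrangement in $T$ beats $|\mathcal{N}|$; since $T\setminus e$ is obtained by deleting a single edge and any $2$-matching of $T$ containing $e$ has $e$ contributing to the degree count at both $u$ and $v$, if $u$ is saturated in $T\setminus e$ then swapping out a $u$-edge for $e$ gives no net gain.) For the "only if" direction: assume $\nu_2(T)=\nu_2(T\setminus e)$ and suppose for contradiction that neither $u$ nor $v$ is saturated in $T\setminus e$; then there is a maximum $2$-matching $\mathcal{N}$ of $T\setminus e$ with at most one edge incident to $u$ and at most one incident to $v$, so $\mathcal{N}\cup\{e\}$ is a $2$-matching of $T$ of size $\nu_2(T\setminus e)+1=\nu_2(T)+1$, a contradiction.

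The main obstacle I anticipate is the "if" direction of the "$=0$" case: from "$u$ is saturated in $T\setminus e$" one must rule out the possibility that $T$ has a strictly larger $2$-matching that uses $e$ together with a genuinely different configuration of edges around $u$ and $v$ — this requires an exchange/augmenting-path argument exploiting that $T$ is a tree (so that $2$-matchings are disjoint unions of paths, as noted after the definition of $2$-matching) and that removing $e$ disconnects $T$ into the two components containing $u$ and $v$. I would handle this by restricting a hypothetical maximum $2$-matching $\mathcal{M}'$ of $T$ containing $e$ to each of the two components, using saturation of $u$ to bound the contribution on $u$'s side, and then recombining to contradict maximality. The remaining pieces are bookkeeping on incidence counts and are routine.
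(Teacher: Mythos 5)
Your proposal is correct and follows essentially the same route as the paper: bound $\nu_2(T)-\nu_2(T\setminus e)\in\{0,1\}$ by restricting to the two components $T_u$ and $T_v$ of $T\setminus e$, get ``$=1$ iff $e$ saturated'' formally, and settle the ``$=0$'' case by degree bookkeeping at $u$ and $v$ (if neither endpoint is saturated, adjoin $e$ to suitable maximum $2$-matchings of the components; if $u$ is saturated, any $2$-matching of $T$ through $e$ loses an edge on $u$'s side), which is exactly the paper's argument, merely organized as two direct biconditionals rather than the paper's chain of equivalences. One cosmetic caveat: your opening remark that ``it suffices to prove one of the two biconditionals'' is not literally true without the extra link between the two saturation conditions, but it is harmless since you in fact prove both biconditionals.
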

\begin{proof}
Let $T_u$, respectively, $T_v$, be the connected components of $T\setminus e$ that contain the vertex $u$, respectively, $v$.
Let $\mathcal{M}$ be a maximum $2$-matching of $T$, $\mathcal{M}_u=\mathcal{M}\cap E(T_u)$, and $\mathcal{M}_v=\mathcal{M}\cap E(T_v)$.
Note that $\mathcal{M}_u$ and $\mathcal{M}_v$ are not necessarily maximum $2$-matchings of $T_u$ and $T_v$ respectively.
However, we can ensure that at least one of them is and the other is almost of maximum size ($\nu_2(T_v)-1$).
Since $|\mathcal{M}|=|\mathcal{M}_u|+|\mathcal{M}_u|+|\mathcal{M}\cap \{e\}|$ and $0\leq |\mathcal{M}\cap \{e\}|\leq 1$,
then $\nu_2(T)\leq \nu_2(T\setminus e)+1$.
In a similar way, taking maximum $2$-matchings of $T_u$ and $T_v$ we get that $\nu_2(T\setminus e)\leq \nu_2(T)$
and therefore $\nu_2(T)-1\leq \nu_2(T\setminus e) \leq \nu_2(T)$.

Now, $e$ is not saturated in $T$ if and only if there exist $\mathcal{N}_u$ and $\mathcal{N}_v$ maximum $2$-matchings of $T_u$ and $T_v$ respectively such that
$\mathcal{N}=\mathcal{N}_u\cup \mathcal{N}_v$ is a maximum $2$-matching of $T$.
This happens if and only if $\nu_2(T)= \nu_2(T\setminus e)$.
That is, $\nu_2(T)=\nu_2(T\setminus e)+1$ if and only if $e$ is saturated in $T$.

Finally, $e$ is saturated in $T$ if and only if each maximum $2$-matching $\mathcal{M}$ of $T$ is such that
$e\in \mathcal{M}$ and $\mathcal{M}\setminus e$ is a maximum $2$-matching of $T\setminus e$ if and only if 
${\rm deg}_{T[\mathcal{M}\setminus e]}(u),{\rm deg}_{T[\mathcal{M}\setminus e]}(v)\leq 1$.
This happens if and only if $u$ and $v$ are not saturated in $T\setminus e$.
That is, $e$ is saturated in $T$ if and only if $u$ and $v$ are not saturated in $T\setminus e$
or equivalently $\nu_2(T)=\nu_2(T\setminus e)$ if an only if $u$ or $v$ are saturated in $T\setminus e$.
\end{proof}

Now, we present how the $2$-matching number of a tree changes when we delete one if the trees vertices. 
In the following, $N_T(v)$ denotes the set of vertices of $T$ which are adjacent to $v$.

\begin{Lemma}\label{T-v}
Let $T$ be a tree without loops, $v$ a vertex of $T$, $N_T(v)=\{w_1,\ldots,w_s\}$, 
and $T_i$ the connected component of $T\setminus v$ that contains $w_i$.
Then
\[
\nu_2(T)-\nu_2(T\setminus v)=
\begin{cases}
2 & \text{if and only if } v \text{ is saturated in } T,\\
1 & \text{if and only if  there exists } 1\leq j \leq s \text{ such that } vw_j \text{ is saturated  and }\\
& w_i \text{ is saturated in } T_{i} \text{ for all } i\neq j,\\
0 & \text{if and only if } w_i \text{ is saturated in } T_{i} \text{ for all } 1\leq i \leq s.
\end{cases}
\]
\end{Lemma}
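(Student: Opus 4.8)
The plan is to deduce Lemma~\ref{T-v} from Lemma~\ref{T-e} by a bridge–deletion argument, reducing the removal of a vertex $v$ to the successive removal of the edges $vw_1,\ldots,vw_s$ incident with $v$. Concretely, write $T^{(0)}=T$ and $T^{(k)}=T\setminus\{vw_1,\ldots,vw_k\}$, so that $T^{(s)}$ is the disjoint union of the isolated vertex $v$ together with the components $T_1,\ldots,T_s$; hence $\nu_2(T\setminus v)=\sum_{i=1}^s\nu_2(T_i)=\nu_2(T^{(s)})$. By Lemma~\ref{T-e} applied to each bridge in turn, $\nu_2(T^{(k-1)})-\nu_2(T^{(k)})\in\{0,1\}$, and it equals $1$ exactly when $vw_k$ is saturated in $T^{(k-1)}$, and $0$ exactly when $v$ or $w_k$ is saturated in $T^{(k)}$. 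Therefore $\nu_2(T)-\nu_2(T\setminus v)$ is the number of indices $k$ for which the edge $vw_k$ is saturated in $T^{(k-1)}$; the whole game is to show this count is $0$, $1$ or $2$ according to the stated cases, and in particular that it never exceeds $2$.

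The key observation for the upper bound is that in any $2$-matching of $T$ the vertex $v$ has at most two incident edges, so at most two of the bridges $vw_k$ can be used simultaneously; I would make this precise by showing that if $vw_j$ is saturated in $T^{(j-1)}$ for three distinct indices, one obtains (by splicing the corresponding maximum $2$-matchings of the pieces) a $2$-matching of $T$ using three edges at $v$, a contradiction. For the case analysis I would argue as follows. If $v$ is saturated in $T$, every maximum $2$-matching uses two bridges at $v$; deleting a non-used bridge keeps $v$ saturated and costs nothing, while deleting a used one is forced to cost $1$ (it is saturated in the current tree, since its other endpoint cannot absorb the drop without $v$ losing its second edge) — so exactly two deletions cost $1$ and $\nu_2(T)-\nu_2(T\setminus v)=2$. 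If $v$ is not saturated but some $vw_j$ is saturated in $T$, then after removing the other bridges (each removal must be free, i.e. the relevant $w_i$ stays saturated in $T_i$, for otherwise we would create a second costly step and hence force $v$ to be saturated) we are left with $vw_j$ as a bridge joining $T_j$ to the rest, still saturated, whose removal costs $1$; this gives the middle case, and the condition "$w_i$ saturated in $T_i$ for $i\ne j$" falls out of Lemma~\ref{T-e} applied to those free deletions. The last case is the complementary one: if $w_i$ is saturated in $T_i$ for all $i$, then by Lemma~\ref{T-e} each deletion $T^{(k-1)}\rightsquigarrow T^{(k)}$ is free, so $\nu_2(T)=\nu_2(T\setminus v)$.

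The main obstacle I anticipate is the order-dependence of the intermediate trees $T^{(k)}$: the statement "$w_i$ is saturated in $T_i$" refers to the fixed small trees $T_i$, not to the partially-deleted $T^{(k)}$, so I must check that saturation of $w_i$ in $T_i$ is exactly the right book-keeping quantity regardless of the order in which the bridges are peeled off. I would handle this by proving the cleaner two-sided statement first — namely that $\nu_2(T)-\nu_2(T\setminus v)$ equals $2$ iff $v$ is saturated in $T$, and equals $0$ iff every $w_i$ is saturated in $T_i$ — directly from the $2$-matching structure (a maximum $2$-matching of $T$ restricts on each $T_i$ to something of size $\nu_2(T_i)$ or $\nu_2(T_i)-1$, and $v$ can be incident to edges into at most two of the $T_i$), and then let the middle case be forced by elimination together with one application of Lemma~\ref{T-e} to pin down which bridge is the saturated one. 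This makes the argument independent of any ordering and isolates the only genuinely delicate point, which is the gluing/restriction compatibility of maximum $2$-matchings across the cut vertex $v$.
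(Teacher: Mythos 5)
Your final plan (restrict a maximum $2$-matching across the cut vertex, prove the extreme cases $0$ and $2$ directly, use Lemma~\ref{T-e} for the middle case) is essentially the paper's proof, but as written the proposal has two concrete problems. First, the peeling argument contains a false step: ``deleting a used one is forced to cost $1$ (it is saturated in the current tree \dots)''. An edge used by \emph{some} maximum $2$-matching need not be saturated, and Lemma~\ref{T-e} only charges a cost of $1$ for edges lying in \emph{every} maximum $2$-matching. The star $S(3)$ with center $v$ already refutes your claim: $v$ is saturated, yet no single edge $vw_i$ is saturated (the maximum $2$-matchings are the three pairs of edges at $v$), so the first bridge you delete costs $0$ even though it is used by a maximum $2$-matching; only the later deletions, in the partially deleted graphs, become costly. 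You do flag the order-dependence and retreat to the direct restriction argument, which is fine (and is exactly what the paper does: $\mathcal{M}_i=\mathcal{M}\cap E(T_i)$ has size at least $\nu_2(T_i)-1$, with maximality forced when $vw_i\notin\mathcal{M}$ or $v$ is saturated, and the difference is bounded by $|\mathcal{M}\cap\delta_T(v)|\le 2$). A minor additional point: the intermediate graphs $T^{(k)}$ are forests, not trees, so even the telescoping use of Lemma~\ref{T-e} needs a component-wise remark.

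The genuine gap is the middle case. From the two equivalences you propose to prove first, elimination only gives: the difference is $1$ if and only if ($v$ is not saturated in $T$ \emph{and} some $w_i$ is not saturated in $T_i$). That is not yet the stated condition, and ``one application of Lemma~\ref{T-e} to pin down which bridge is the saturated one'' does not bridge it. You still need (i) an exchange argument showing that when the difference is $1$ at most one index $j$ can have $w_j$ unsaturated in $T_j$: if there were two, take maximum $2$-matchings of the $T_i$ chosen with degree at most $1$ at those two attachment vertices and add the two edges from $v$, producing a $2$-matching of size $\nu_2(T\setminus v)+2>\nu_2(T)$; and (ii) a proof that this unique $vw_j$ is saturated in $T$, which requires first applying your $0$-case to the component of $T\setminus vw_j$ containing $v$ (all its branch vertices $w_i$, $i\ne j$, are saturated in $T_i$) to compute $\nu_2(T\setminus vw_j)=\sum_i\nu_2(T_i)=\nu_2(T)-1$, and only then invoking Lemma~\ref{T-e}; the same chain read in reverse gives the converse direction (this is precisely the displayed computation in the paper's proof). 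With (i) and (ii) supplied your argument closes, and it then coincides with the paper's; without them the middle case is asserted rather than proved.
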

\begin{proof}
Given a maximum $2$-matching $\mathcal{M}$ of $T$, let $\mathcal{M}_i=\mathcal{M}\cap E(T_i)$.
Note that $\mathcal{M}_i$ is not necessarily a maximum $2$-matchings of $T_i$.
However, this is true in the following cases: $(i)$ $vw_i\notin \mathcal{M}$ and $(ii)$ $vw_i\in \mathcal{M}$ but $v$ is saturated in $T$.
Is clear that if $\mathcal{M}_i$ is not a maximum $2$-matching of $T_i$, then there exists a maximum $2$-matchings $\mathcal{M}'_i$ 
of $T_i$ such that $|\mathcal{M}'_i| >|\mathcal{M}_i|$.
{\bf Case $(i)$}. If $vw_i\notin \mathcal{M}$, then $\mathcal{M}'=(\mathcal{M}\setminus \mathcal{M}_i) \cup \mathcal{M}'_i$
is a $2$-matching of $T$ with $|\mathcal{M}'|> |\mathcal{M}|$, a contradiction.
{\bf Case $(ii)$}. If $vw_i\in \mathcal{M}$ and $v$ is saturated in $T$, then $\mathcal{M}'=[\mathcal{M}\setminus (\mathcal{M}_i\cup\{vw_i\})] \cup \mathcal{M}'_i$ 
is a $2$-matching of $T$ with $|\mathcal{M}'|\geq |\mathcal{M}|$ and ${\rm deg}_{\mathcal{M}'}(v)=1$, a contradiction.

On the other hand, since $T\setminus v=T_1\sqcup \cdots \sqcup T_s$, then $\nu_2(T\setminus v)=\nu_2(T_1)+\cdots+\nu_2(T_s)$ and
\[
2 \geq |\mathcal{M}\cap \delta_T(v)|=|\mathcal{M}\setminus \big(\bigcup_{i=1}^s \mathcal{M}_i \big)| \geq \nu_2(T)- \sum_{i=1}^s \nu_2(T_i)= 
\nu_2(T)-\nu_2(T\setminus v)=\nu_2(T)-\nu_2(T\setminus v),
\]
where $\delta_T(v)=\{vw \, | \, vw\in E(T)\}$.
That is, $\nu_2(T)-\nu_2(T\setminus v)\leq 2$.
Also, clearly $\nu_2(T\setminus v)\leq \nu_2(T)$ and therefore $\nu_2(T\setminus v) \leq \nu_2(T)\leq \nu_2(T\setminus v)+ 2$.

Now, if $\nu_2(T)-\nu_2(T\setminus v)=2$, then $|\mathcal{M}\cap \delta_T(v)|=2$ and $v$ is saturated in $T$.
Also, if $v$ is saturated in $T$, then the $\mathcal{M}_i$ are maximum $2$-matchings in $T_i$ and
\[
\nu_2(T)=|\mathcal{M}|=\sum_{i=1}^s |\mathcal{M}_i|+2=\sum_{i=1}^s \nu_2(T_i)+2=\nu_2(T\setminus v)+2.
\]
Also, it is not difficult to check that $\nu_2(T)=\nu_2(T\setminus v)$ if and only if $w_i$ is saturated in $T_{w_i}$ for all $w_i\in N_T(v)$.
Finally, if $\nu_2(T)=\nu_2(T\setminus v)+1$, then there exist $1\leq j\leq s$ such that $vw_j$ is saturated and $w_i$ is saturated in $T_{w_i}$ for all $w_i\in N_T(v)\setminus w_j$.
For the converse, we have the following:
\[
\nu_2(T\setminus v)= \sum_{i=1}^s \nu_2(T_i)\overset{w_i \text{ is saturated }\forall \, i\neq j}{=}\nu_2(T_v)+\nu_2(T_j)=\nu_2(T\setminus vw_j)\overset{{\rm Lemma}~\ref{T-e}}{=}\nu_2(T)-1,
\]
where $T_v$ is the connected component of $T\setminus vw_j$ that contains $v$.
\end{proof}

The next result proves that if $w_i$ is saturated in $T_i$, then $w_i$ is saturated in $T$.
Is not difficult to check that the converse is not true in general.

\begin{Proposition}\label{extension}
Let $T$ be a tree, $uv\in E(T)$, and $T_u$ the connected component of $T\setminus uv$ that contains $u$.
If $u$ is saturated in $T_u$, then $u$ is saturated in $T$.
\end{Proposition}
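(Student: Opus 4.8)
The plan is to reduce the statement to a single identity between $2$-matching numbers and then invoke Lemmas~\ref{T-e} and~\ref{T-v}. Write $T_v$ for the connected component of $T\setminus uv$ containing $v$. Since every edge of a tree is a bridge, $uv$ is the only edge of $T$ joining $T_u$ and $T_v$, so $T\setminus uv=T_u\sqcup T_v$ and therefore $\nu_2(T\setminus uv)=\nu_2(T_u)+\nu_2(T_v)$. We may assume $u$ has at least one neighbour inside $T_u$, for otherwise $T_u$ is the single vertex $u$, which is never saturated, and the hypothesis is vacuous.

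First I would record two consequences of the hypothesis that $u$ is saturated in $T_u$. Reading ``saturated in the disconnected graph $T\setminus uv$'' as ``saturated in the component that contains the vertex'', $u$ is saturated in $T\setminus uv$; hence Lemma~\ref{T-e}, applied to the edge $uv$, puts us in its first case, giving
\[
\nu_2(T)=\nu_2(T\setminus uv)=\nu_2(T_u)+\nu_2(T_v).
\]
On the other hand, applying Lemma~\ref{T-v} to the tree $T_u$ and the vertex $u$: since $u$ is saturated in $T_u$, the first alternative of that lemma holds, so
\[
\nu_2(T_u)=\nu_2(T_u\setminus u)+2.
\]

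Next I would compute $\nu_2(T\setminus u)$ and put the pieces together. Every neighbour of $u$ other than $v$ lies in $T_u$, so deleting $u$ from $T$ produces the disjoint union $T\setminus u=(T_u\setminus u)\sqcup T_v$, and consequently $\nu_2(T\setminus u)=\nu_2(T_u\setminus u)+\nu_2(T_v)$. Substituting the two displayed identities,
\[
\nu_2(T)=\nu_2(T_u)+\nu_2(T_v)=\big(\nu_2(T_u\setminus u)+2\big)+\nu_2(T_v)=\nu_2(T\setminus u)+2,
\]
so $\nu_2(T)-\nu_2(T\setminus u)=2$. By Lemma~\ref{T-v}, now applied to $T$ and $u$, this equality is equivalent to $u$ being saturated in $T$, which is exactly the claim.

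There is no serious obstacle here; the content is the bookkeeping of $\nu_2$ over the three subgraphs $T_u$, $T_v$ and $T_u\setminus u$. The only subtlety is that Lemma~\ref{T-e} is being used with $T\setminus uv$ disconnected, so one must read ``$u$ saturated in $T\setminus uv$'' as ``$u$ saturated in its component $T_u$'', which is precisely the hypothesis. One could instead argue directly from a maximum $2$-matching $\mathcal{M}$ of $T$: when $uv\notin\mathcal{M}$, the trace $\mathcal{M}\cap E(T_u)$ is forced to be a maximum $2$-matching of $T_u$ and hence meets $u$ in two edges; when $uv\in\mathcal{M}$, the identity $\nu_2(T_u\setminus u)=\nu_2(T_u)-2$ (again Lemma~\ref{T-v}) shows that $\mathcal{M}\cap E(T_u)$ still contributes one edge at $u$. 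The counting argument above simply merges both cases.
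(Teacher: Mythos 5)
Your proof is correct and follows essentially the same route as the paper: use Lemma~\ref{T-e} to get $\nu_2(T)=\nu_2(T_u)+\nu_2(T_v)$, then the identity $\nu_2(T)-\nu_2(T\setminus u)=\nu_2(T_u)-\nu_2(T_u\setminus u)=2$ via Lemma~\ref{T-v} applied to $T_u$, and conclude by Lemma~\ref{T-v} applied to $T$. The extra remarks (the degenerate one-vertex case and the component-wise reading of ``saturated in $T\setminus uv$'') are harmless clarifications of the same argument.
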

\begin{proof}
Let $T_v$ be the connected component of $T\setminus uv$ that contains $v$.
Since $u$ is saturated in $T_u$ and $T\setminus e=T_u\sqcup T_v$, then by Lemma~\ref{T-e}, $\nu_2(T)=\nu_2(T_u)+\nu_2(T_v)$.
Thus 
\[
\nu_2(T)-\nu_2(T \setminus u)=\nu_2(T_u)+\nu_2(T_v)-[\nu_2(T_u\setminus u)+\nu_2(T_v)]=\nu_2(T_u)-\nu_2(T_u\setminus u)\overset{u \text{ is saturated in } T_u}{=}2.
\]
Finally, by Lemma~\ref{T-v}, $u$ is saturated in $T$.
\end{proof}

As a consequence, we get the following result, which will be useful for proving one of the main results of this paper.
\begin{Corollary}\label{v+2}
If $T$ is a tree with at least three vertices, then it has at least one saturated vertex.
\end{Corollary}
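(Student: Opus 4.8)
The plan is to induct on the number $n$ of vertices of $T$. The base case $n=3$ is immediate: the only tree on three vertices is the path $P_3$, whose single maximum $2$-matching is the pair of its two edges, so its central vertex is saturated.

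For the inductive step I would take a tree $T$ with $n\ge 4$ vertices, pick a leaf $a$ with neighbour $b$, and pass to $T'=T\setminus a$, which is again a tree, now on $n-1\ge 3$ vertices; by the induction hypothesis $T'$ has a saturated vertex $w$. If $b$ itself is saturated in $T'$ (in particular if $w=b$), then, since $T'$ is exactly the connected component of $T\setminus ab$ containing $b$, Proposition~\ref{extension} applied to the edge $ab$ immediately gives that $b$ is saturated in $T$, and we are done.

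The remaining case is when $b$ is \emph{not} saturated in $T'$; then automatically $w\neq b$, so $w$ is not an endpoint of $ab$ and $T\setminus w$ is just $T'\setminus w$ with the pendant vertex $a$ re-attached at $b$. Here I would compare $2$-matching numbers. First, since $T\setminus ab=T'\sqcup\{a\}$ and neither $a$ (isolated, hence never saturated) nor $b$ (not saturated in $T'$) is saturated in $T\setminus ab$, Lemma~\ref{T-e} gives $\nu_2(T)=\nu_2(T')+1$. Second, $w$ being saturated in $T'$ gives, by Lemma~\ref{T-v}, $\nu_2(T'\setminus w)=\nu_2(T')-2$. Third, deleting the edge $ab$ from any maximum $2$-matching of $T\setminus w$ leaves a $2$-matching of $T'\setminus w$, so $\nu_2(T\setminus w)\le\nu_2(T'\setminus w)+1$. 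Chaining these,
\[
\nu_2(T)-\nu_2(T\setminus w)\ \ge\ \big(\nu_2(T')+1\big)-\big(\nu_2(T'\setminus w)+1\big)\ =\ \nu_2(T')-\nu_2(T'\setminus w)\ =\ 2 .
\]
On the other hand Lemma~\ref{T-v} applied to $T$ and $w$ says this difference is at most $2$, so it equals $2$, and one last appeal to Lemma~\ref{T-v} yields that $w$ is saturated in $T$.

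The only place I expect any friction is this last case: the crude bookkeeping only pins $\nu_2(T)-\nu_2(T\setminus w)$ down to $\{2,3\}$, and the point is that the value $3$ is forbidden by the inequality $\nu_2(T)-\nu_2(T\setminus w)\le 2$ that is built into Lemma~\ref{T-v}. Everything else — that deleting a leaf stays within trees, and that re-attaching a pendant vertex perturbs $\nu_2$ by at most one — is routine.
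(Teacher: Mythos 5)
Your proof is correct, and all three appeals to the paper's lemmas are legitimate: in the case where the support vertex $b$ is saturated in $T'=T\setminus a$ you invoke Proposition~\ref{extension} exactly as intended, and in the other case your chain $\nu_2(T)=\nu_2(T')+1$ (Lemma~\ref{T-e}, since neither the isolated vertex $a$ nor the unsaturated $b$ is saturated in $T\setminus ab$), $\nu_2(T'\setminus w)=\nu_2(T')-2$ (Lemma~\ref{T-v}), and $\nu_2(T\setminus w)\le\nu_2(T'\setminus w)+1$ does force $\nu_2(T)-\nu_2(T\setminus w)=2$, whence $w$ is saturated in $T$ by the equivalence in Lemma~\ref{T-v}. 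The paper, by contrast, gives no argument at all: its proof of Corollary~\ref{v+2} is the one-line assertion that the statement ``follows by Lemma~\ref{T-v} and Proposition~\ref{extension}'' (plus the remark that the two-vertex tree is the exception), so your leaf-deletion induction, with the split on whether $b$ is saturated in $T'$, is genuinely your own scaffolding and supplies the missing details using essentially the paper's toolkit plus Lemma~\ref{T-e}. For comparison, a non-inductive reading of the paper's hint is also possible: for each edge $e=uv$, Lemma~\ref{T-e} says either an endpoint of $e$ is saturated in its component of $T\setminus e$ (and Proposition~\ref{extension} then produces a saturated vertex of $T$), or $e$ is saturated in $T$; if the latter holds for every edge, then $E(T)$ itself is a $2$-matching, so $T$ is a path on $n\ge 3$ vertices and its internal vertices are saturated. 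Your induction costs a base case and a case analysis but avoids that global dichotomy; both arguments are sound, and yours is fully rigorous as written.
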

\begin{proof}
This follows by Lemma~\ref{T-v} and Proposition~\ref{extension}.
The only tree that does not has a saturated vertex is the tree with only one edge.
\end{proof}

Now, we will study the $2$-matchings of the graphs obtained from a given graph by adding a loop at each of its vertices.


\subsection{Two matchings of $G^\ell$}
Given a simple graph $G$, let $G^\ell$ be the graph obtained from $G$ by adding a loop at each vertex of $G$.
That is, $E(G^\ell)=E(G)\cup\{uu\, | \, u\in V(G)\}$.
Given $\mathcal{M}\in \mathcal{V}_2(G^\ell)$, let $\ell(\mathcal{M})=\mathcal{M}\cap\{uu\, | \,u\in V(G)\}$.
Now, we introduce the concept of a minimal $2$-matching, which is central in the description of the critical ideals of a tree.

\begin{Definition}\label{defbas}
A $2$-matching $\mathcal{M}$ of $G^\ell$ is called minimal if there does not exist a $2$-matching $\mathcal{M}'$ 
of $G^\ell$ such that $\ell(\mathcal{M}')\subsetneq \ell(\mathcal{M})$ and $|\mathcal{M}|=|\mathcal{M}'|$.
The set of all minimal $2$-matchings of $G^\ell$ will be denoted by $\mathcal{V}_2^*(G^\ell)$. 
Moreover, let $\mathcal{V}_2^*(G^\ell,j)=\mathcal{V}_2^*(G^\ell)\cap \mathcal{V}_2(G^\ell,j)$ for any $1\leq j\leq n$.
\end{Definition}
\begin{Remark}
Note that the definition of a minimal $2$-matching makes sense only for graphs with a loop at each of their vertices. 
\end{Remark}

If $\mathcal{M}\in\mathcal{V}_2(G^\ell,j)\setminus \mathcal{V}_2(G,j)$ and $\mathcal{N}\in\mathcal{V}_2(G,j)$, 
then $|\mathcal{M}|=|\mathcal{N}|$ and $\ell(\mathcal{N})=\emptyset\subsetneq\ell(\mathcal{M})$.
Thus $\mathcal{V}_2^*(G^\ell,j)=\mathcal{V}_2(G,j)$ for all $1\leq j\leq \nu_2(G)$.
Moreover, next result shows that some maximal $2$-matchings of $T$ are part of a minimal $2$-matching of $T^\ell$.

\begin{Proposition}\label{minmax}
If $\mathcal{M}$ is a maximal $2$-matching of $T[N_T(\mathcal{M})]$, then 
\[
\mathcal{N}=\mathcal{M}\cup\{uu\, |\, u\notin V(\mathcal{M})\}
\]
is a minimal $2$-matching of $T^\ell$.
\end{Proposition}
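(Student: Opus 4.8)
The plan is to verify the two defining properties separately: first that $\mathcal{N}$ is a $2$-matching of $T^\ell$, and then that it is minimal in the sense of Definition~\ref{defbas}. The first part is essentially bookkeeping. Every vertex $u \notin V(\mathcal{M})$ has exactly one incident edge in $\mathcal{N}$ coming from the loop $uu$ — but recall the convention that a loop counts twice toward the incidence count, so $u$ has incidence $2$, which is allowed. Every vertex $u \in V(\mathcal{M})$ has incidence at most $2$ already in $\mathcal{M}$, and none of the loops $uu$ we added touch $V(\mathcal{M})$, so the incidence is unchanged. Hence $\mathcal{N}\in\mathcal{V}_2(T^\ell)$. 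Note $\ell(\mathcal{N})=\{uu \mid u\notin V(\mathcal{M})\}$ and $|\mathcal{N}|=|\mathcal{M}|+(n-|V(\mathcal{M})|)$, where $n=|V(T)|$.

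For minimality, I would argue by contradiction: suppose there is a $2$-matching $\mathcal{M}'$ of $T^\ell$ with $\ell(\mathcal{M}')\subsetneq\ell(\mathcal{N})$ and $|\mathcal{M}'|=|\mathcal{N}|$. Write $\mathcal{M}'=F'\cup\ell(\mathcal{M}')$ where $F'$ is the set of non-loop edges of $\mathcal{M}'$, i.e.\ a $2$-matching of $T$. Since $\ell(\mathcal{M}')\subsetneq\ell(\mathcal{N})$ there is at least one vertex $w\notin V(\mathcal{M})$ with $ww\notin\mathcal{M}'$; more generally, counting loops, $|\ell(\mathcal{M}')|\le |\ell(\mathcal{N})|-1 = n-|V(\mathcal{M})|-1$. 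From $|\mathcal{M}'|=|\mathcal{N}|$ we get $|F'| = |\mathcal{M}| + (n-|V(\mathcal{M})|) - |\ell(\mathcal{M}')| \ge |\mathcal{M}|+1$. The key structural point is now to locate where these extra non-loop edges of $F'$ must live: every vertex $u\in V(\mathcal{M})$ already carries incidence $2$ from... wait — that is not literally true, so the honest version is that the vertices outside $V(\mathcal{M})$ whose loops are \emph{missing} from $\mathcal{M}'$ are the only ones that can absorb new non-loop edges without violating the $2$-matching condition relative to what $\mathcal{M}$ already uses. I would make this precise by showing that $F'$ restricted to the induced subgraph $T[N_T(\mathcal{M})]$ (together with $\mathcal{M}$) produces a $2$-matching of $T[N_T(\mathcal{M})]$ strictly containing $\mathcal{M}$, contradicting maximality of $\mathcal{M}$ in $T[N_T(\mathcal{M})]$.

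The cleanest way to organize the second part is: for each vertex $u$, compare $\mathrm{deg}_{\mathcal{M}}(u)$ with the number of non-loop edges of $\mathcal{M}'$ at $u$. A vertex $u\in V(\mathcal{M})$ has $\mathrm{deg}_{F'}(u)\le 2$, and a vertex $u\notin V(\mathcal{M})$ with $uu\in\mathcal{M}'$ has $\mathrm{deg}_{F'}(u)\le 0$, while a vertex $u\notin V(\mathcal{M})$ with $uu\notin\mathcal{M}'$ has $\mathrm{deg}_{F'}(u)\le 2$ but lies in $N_T(\mathcal{M})$ only if it is adjacent to $V(\mathcal{M})$. The surplus $|F'|-|\mathcal{M}|\ge 1$ then forces $F'$ to use an edge that, when added to $\mathcal{M}$, keeps incidences $\le 2$ at every vertex and stays inside $T[N_T(\mathcal{M})]$ — precisely a $2$-matching of $T[N_T(\mathcal{M})]$ properly containing $\mathcal{M}$, which is impossible. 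I expect the main obstacle to be the careful handling of the incidence count on the boundary vertices — those in $N_T(\mathcal{M})\setminus V(\mathcal{M})$ — and making sure the edge-counting inequality genuinely produces an edge that is both admissible for enlarging $\mathcal{M}$ and confined to $T[N_T(\mathcal{M})]$; this is where one has to use that any non-loop edge of $F'$ not contained in $T[N_T(\mathcal{M})]$ has both endpoints outside $N_T(\mathcal{M})$, hence outside $V(\mathcal{M})$, and can be freely traded for the corresponding loops without changing $|\mathcal{M}'|$, reducing to the previous case.
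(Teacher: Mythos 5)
Your verification that $\mathcal{N}$ is a $2$-matching, and the opening of your minimality argument, follow the same route as the paper: assume a violating $\mathcal{M}'$ exists, separate its loops, and count that its non-loop part $F'$ has at least $|\mathcal{M}|+1$ edges. The genuine gap is in how this surplus is turned into a contradiction. Maximality of $\mathcal{M}$ in $T[N_T(\mathcal{M})]$ only says that $\mathcal{M}$ itself cannot be enlarged by an edge of that induced subgraph; it does not bound the size of an arbitrary $2$-matching of $T[N_T(\mathcal{M})]$ (maximal is not maximum), and $F'$ has no a priori relation to $\mathcal{M}$. So your key assertion -- that $F'$ restricted to $T[N_T(\mathcal{M})]$ ``together with $\mathcal{M}$'' yields a $2$-matching of $T[N_T(\mathcal{M})]$ strictly containing $\mathcal{M}$ -- is unsupported: the union $\mathcal{M}\cup F'$ will in general violate the incidence bound at vertices of $V(\mathcal{M})$, and nothing forces $F'$ to contain $\mathcal{M}$ or any extension of it. This is precisely the step you flag with ``I expect the main obstacle to be\dots'', and it is the whole content of the proposition; the raw edge count alone cannot close it, because a large $F'$ could in principle be arranged quite differently from $\mathcal{M}$, and ruling that out is where the hypothesis must actually be used.

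The auxiliary reduction you propose is also shaky. An edge of $F'$ not contained in $T[N_T(\mathcal{M})]$ need not have both endpoints outside $N_T(\mathcal{M})$ (one endpoint may lie in $N_T(\mathcal{M})\setminus V(\mathcal{M})$); what is true, and what you actually need, is that both endpoints avoid $V(\mathcal{M})$. More seriously, ``trading'' such an edge for a loop is not free: the vertex receiving the loop may already carry other $F'$-edges, and when the trade is possible it enlarges $\ell(\mathcal{M}')$ and can make it equal to $\ell(\mathcal{N})$, destroying the assumed violation rather than reducing to the previous case. The paper's proof keeps the same contradiction setup but closes in one stroke: using that $\mathcal{M}$ is maximal on $N_T(\mathcal{M})$, it argues the extra non-loop edges of the competing $2$-matching must have an end in $V(T)\setminus V(\mathcal{M})$ and plays this off against the fact that $\mathcal{N}$ has a loop at every such vertex. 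Your write-up never reaches a concrete contradiction of this (or any other) kind, so the minimality half remains unproved as it stands.
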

\begin{proof} 
Assume that $\mathcal{N}$ is a not minimal $2$-matching of $T^\ell$.
Thus, there exists a $2$-matching $\mathcal{N}'$ of $T^\ell$ such that $|\mathcal{N}|=|\mathcal{N}'|$ and $\ell(\mathcal{N}')\subsetneq\ell(\mathcal{N})$. 
That is, $\mathcal{N}'$ has at least one more edge than $\mathcal{N}$. 
Since $\mathcal{M}$ is maximal on $N_T(\mathcal{M})$, $\mathcal{N}'$ must have an edge with at least one end in $V(T)\setminus V(\mathcal{M})$,
a contradiction to the fact that $\mathcal{N}$ has a loop at all the vertices of $V(T)\setminus V(\mathcal{M})$.
\end{proof}

\begin{Example}
Let $\mathcal{C}$ be the caterpillar tree considered in Figure~\ref{cat}.a.
It is not difficult to check that $\nu_2(\mathcal{C})=4$.
Thus, any minimal $2$-matching of $\mathcal{C}^\ell$ with at least one loop has at least size $5$.
The $2$-matching $\mathcal{M}_1=\{v_1v_2,v_2v_5,v_5v_6,v_6v_9, v_3v_3\}$ (see Figure~\ref{M5M6}.a)
is a minimal $2$-matching of $\mathcal{C}^\ell$ of size $5$ with only one loop.
Also, the $2$-matching given in Figure~\ref{M5M6}.b is a minimal $2$-matching of $\mathcal{C}^\ell$ of size $6$.

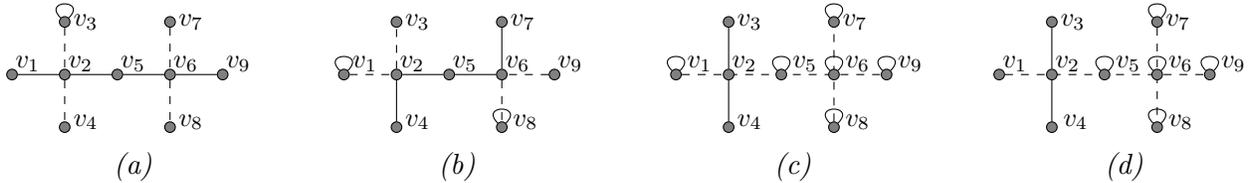
\begin{figure}[ht]
\begin{center}
\begin{tabular}{c@{\extracolsep{8mm}}c@{\extracolsep{8mm}}c@{\extracolsep{8mm}}c}
\begin{tikzpicture}[every loop/.style={}, scale=0.7]
\draw {
	(0,0) node[draw, circle, fill=gray, inner sep=0pt, minimum width=4pt] (v5) {}
	(-1,0) node[draw, circle, fill=gray, inner sep=0pt, minimum width=4pt] (v2) {}
	(-2,0) node[draw, circle, fill=gray, inner sep=0pt, minimum width=4pt] (v1) {}
	(1,0) node[draw, circle, fill=gray, inner sep=0pt, minimum width=4pt] (v6) {}
	(2,0) node[draw, circle, fill=gray, inner sep=0pt, minimum width=4pt] (v9) {}
	(-1,1) node[draw, circle, fill=gray, inner sep=0pt, minimum width=4pt] (v3) {}
	(-1,-1) node[draw, circle, fill=gray, inner sep=0pt, minimum width=4pt] (v4) {}
	(1,1) node[draw, circle, fill=gray, inner sep=0pt, minimum width=4pt] (v7) {}
	(1,-1) node[draw, circle, fill=gray, inner sep=0pt, minimum width=4pt] (v8) {}
	
	(v1) to (v2) 
	(v2) to (v5) 
	(v5) to (v6) 
	(v6) to (v9)
	(v3) edge[dashed] (v2)
	(v2) edge[dashed] (v4)  
	(v7) edge[dashed] (v6)
	(v6) edge[dashed] (v8)
	(v3) edge[loop] (v3)
	
	(v1)+(0.3,0.2) node {\small $v_1$}
	(v2)+(0.3,0.2) node {\small $v_2$}
	(v3)+(0.4,0) node { \small $v_3$}
	(v4)+(0.4,0.1) node {\small $v_4$}
	(v5)+(0.3,0.2) node {\small $v_5$}
	(v6)+(0.3,0.2) node {\small $v_6$}
	(v7)+(0.4,0) node {\small $v_7$}
	(v8)+(0.4,0.1) node {\small $v_8$}
	(v9)+(0.3,0.2) node {\small $v_9$}
	};
\end{tikzpicture}
&
\begin{tikzpicture}[every loop/.style={}, scale=0.7]
\draw {	
	(0,0) node[draw, circle, fill=gray, inner sep=0pt, minimum width=4pt] (v5) {}
	(-1,0) node[draw, circle, fill=gray, inner sep=0pt, minimum width=4pt] (v2) {}
	(-2,0) node[draw, circle, fill=gray, inner sep=0pt, minimum width=4pt] (v1) {}
	(1,0) node[draw, circle, fill=gray, inner sep=0pt, minimum width=4pt] (v6) {}
	(2,0) node[draw, circle, fill=gray, inner sep=0pt, minimum width=4pt] (v9) {}
	(-1,1) node[draw, circle, fill=gray, inner sep=0pt, minimum width=4pt] (v3) {}
	(-1,-1) node[draw, circle, fill=gray, inner sep=0pt, minimum width=4pt] (v4) {}
	(1,1) node[draw, circle, fill=gray, inner sep=0pt, minimum width=4pt] (v7) {}
	(1,-1) node[draw, circle, fill=gray, inner sep=0pt, minimum width=4pt] (v8) {}
	
	(v4) to (v2) 
	(v2) to (v5) 
	(v5) to (v6) 
	(v6) to (v7)
	(v1) edge[loop] (v1) 
	(v8) edge[loop] (v8)
	(v1) edge[dashed] (v2) 
	(v2) edge[dashed] (v3) 
	(v9) edge[dashed] (v6) 
	(v6) edge[dashed] (v8)
	
	(v1)+(0.4,0.2) node {\small $v_1$}
	(v2)+(0.3,0.2) node {\small $v_2$}
	(v3)+(0.4,0) node { \small $v_3$}
	(v4)+(0.4,0.1) node {\small $v_4$}
	(v5)+(0.3,0.2) node {\small $v_5$}
	(v6)+(0.3,0.2) node {\small $v_6$}
	(v7)+(0.4,0) node {\small $v_7$}
	(v8)+(0.45,0.1) node {\small $v_8$}
	(v9)+(0.3,0.2) node {\small $v_9$}
	};
\end{tikzpicture}
&
\begin{tikzpicture}[every loop/.style={}, scale=0.7]
\draw {
	(v1) edge[dashed] (v9) 
	(v8) edge[dashed] (v7) 	
	(0,0) node[draw, circle, fill=gray, inner sep=0pt, minimum width=4pt] (v5) {}
	(-1,0) node[draw, circle, fill=gray, inner sep=0pt, minimum width=4pt] (v2) {}
	(-2,0) node[draw, circle, fill=gray, inner sep=0pt, minimum width=4pt] (v1) {}
	(1,0) node[draw, circle, fill=gray, inner sep=0pt, minimum width=4pt] (v6) {}
	(2,0) node[draw, circle, fill=gray, inner sep=0pt, minimum width=4pt] (v9) {}
	(-1,1) node[draw, circle, fill=gray, inner sep=0pt, minimum width=4pt] (v3) {}
	(-1,-1) node[draw, circle, fill=gray, inner sep=0pt, minimum width=4pt] (v4) {}
	(1,1) node[draw, circle, fill=gray, inner sep=0pt, minimum width=4pt] (v7) {}
	(1,-1) node[draw, circle, fill=gray, inner sep=0pt, minimum width=4pt] (v8) {}
	
	(v2) to (v3) 
	(v2) to (v4)
	(v1) edge[loop] (v1) 
	(v5) edge[loop] (v5) 
	(v6) edge[loop] (v6) 
	(v7) edge[loop] (v7) 
	(v8) edge[loop] (v8) 
	(v9) edge[loop] (v9)
	
	(v1)+(0.45,0.2) node {\small $v_1$}
	(v2)+(0.3,0.2) node {\small $v_2$}
	(v3)+(0.4,0) node { \small $v_3$}
	(v4)+(0.4,0.1) node {\small $v_4$}
	(v5)+(0.45,0.2) node {\small $v_5$}
	(v6)+(0.45,0.2) node {\small $v_6$}
	(v7)+(0.4,0) node {\small $v_7$}
	(v8)+(0.45,0.1) node {\small $v_8$}
	(v9)+(0.45,0.2) node {\small $v_9$}
};
\end{tikzpicture}
&
\begin{tikzpicture}[every loop/.style={}, scale=0.7]
\draw {
	(v7) edge[dashed] (v8) 
	(v1) edge[dashed] (v9) 
	(0,0) node[draw, circle, fill=gray, inner sep=0pt, minimum width=4pt] (v5) {}
	(-1,0) node[draw, circle, fill=gray, inner sep=0pt, minimum width=4pt] (v2) {}
	(-2,0) node[draw, circle, fill=gray, inner sep=0pt, minimum width=4pt] (v1) {}
	(1,0) node[draw, circle, fill=gray, inner sep=0pt, minimum width=4pt] (v6) {}
	(2,0) node[draw, circle, fill=gray, inner sep=0pt, minimum width=4pt] (v9) {}
	(-1,1) node[draw, circle, fill=gray, inner sep=0pt, minimum width=4pt] (v3) {}
	(-1,-1) node[draw, circle, fill=gray, inner sep=0pt, minimum width=4pt] (v4) {}
	(1,1) node[draw, circle, fill=gray, inner sep=0pt, minimum width=4pt] (v7) {}
	(1,-1) node[draw, circle, fill=gray, inner sep=0pt, minimum width=4pt] (v8) {}
	
	(v2) to (v3) 
	(v2) to (v4)
	(v9) edge[loop] (v9) 
	(v6) edge[loop] (v6) 
	(v7) edge[loop] (v7) 
	(v8) edge[loop] (v8) 
	(v5) edge[loop] (v5)
	
	(v1)+(0.3,0.2) node {\small $v_1$}
	(v2)+(0.3,0.2) node {\small $v_2$}
	(v3)+(0.4,0) node { \small $v_3$}
	(v4)+(0.45,0.1) node {\small $v_4$}
	(v5)+(0.45,0.2) node {\small $v_5$}
	(v6)+(0.45,0.2) node {\small $v_6$}
	(v7)+(0.4,0) node {\small $v_7$}
	(v8)+(0.45,0.1) node {\small $v_8$}
	(v9)+(0.45,0.2) node {\small $v_9$}
	};
\end{tikzpicture}\\
(a)&(b)&(c)&(d)
\end{tabular}
\caption{$\mathcal{C}^{\ell}$ and some of its minimal $2$-matchings.}\label{M5M6}
\end{center}
\end{figure}
Let $\mathcal{M}_3=\{v_1v_1,v_3v_2,v_2v_4,v_5v_5, v_6v_6, v_7v_7,v_8v_8, v_9v_9\}$ be the $2$-matching given in Figure~\ref{M5M6}.c.
Using Proposition~\ref{minmax}, it is not difficult to check that $\mathcal{M}_3$ is a minimal $2$-matching of size $8$.
Moreover, $\mathcal{M}_4=\mathcal{M}_3 \setminus \{v_1v_1\}$ (see Figure~\ref{M5M6}.d) is also a minimal $2$-matching, of size $7$.
\end{Example}

Now, we give a recursive description of all the minimal $2$-matchings on $T^\ell$.

\begin{Proposition}
If $T$ is a tree and $e=uv\in E(T)$, then
\[
\mathcal{V}_2^*(T^\ell)\subseteq\{\mathcal{V}_2^*(T_u^\ell+e)^e\cup \mathcal{V}_2^*(T_v^\ell+e)^e\}\cup\{\mathcal{V}_2^*(T_u^\ell)\cup \mathcal{V}_2^*(T_v^\ell)\},
\]
where $T_x$ is the subtree of $T\setminus e$ that contains the vertex $x$ and $\mathcal{V}_2^*(G)^e$ is the set of minimal $2$-matchings of $G$ that contain the edge $e$.
\end{Proposition}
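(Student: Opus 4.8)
The plan is to pick an arbitrary $\mathcal{M}\in\mathcal{V}_2^*(T^\ell)$ and analyse it through the partition $E(T^\ell)=E(T_u^\ell)\sqcup\{e\}\sqcup E(T_v^\ell)$, where the loops of $T^\ell$ are split between $T_u$ and $T_v$ according to the component of $T\setminus e$ containing their base vertex, and $e$ is the unique edge joining the two components. Writing $\mathcal{M}_x=\mathcal{M}\cap E(T_x^\ell)$ for $x\in\{u,v\}$, I would split into the cases $e\notin\mathcal{M}$ and $e\in\mathcal{M}$ and, in each, verify that the relevant restrictions of $\mathcal{M}$ are themselves \emph{minimal} $2$-matchings (Definition~\ref{defbas}) on the smaller graphs; the conclusion then follows since $\mathcal{M}$ is the union of those restrictions. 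Throughout I take $T_x^\ell+e$ to be $T_x^\ell$ with the edge $e$ adjoined (its other endpoint a new leaf carrying no loop), so that $\ell(\mathcal{M}')$ for a $2$-matching $\mathcal{M}'$ of $T_x^\ell+e$ involves only loops at vertices of $T_x$.

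If $e\notin\mathcal{M}$, then $\mathcal{M}=\mathcal{M}_u\sqcup\mathcal{M}_v$ and each $\mathcal{M}_x$ is a $2$-matching of $T_x^\ell$. To see $\mathcal{M}_u\in\mathcal{V}_2^*(T_u^\ell)$, suppose for contradiction that some $2$-matching $\mathcal{M}_u'$ of $T_u^\ell$ satisfies $|\mathcal{M}_u'|=|\mathcal{M}_u|$ and $\ell(\mathcal{M}_u')\subsetneq\ell(\mathcal{M}_u)$; since $V(T_u)\cap V(T_v)=\emptyset$, the set $\mathcal{M}_u'\cup\mathcal{M}_v$ is a $2$-matching of $T^\ell$ of the same cardinality as $\mathcal{M}$ with $\ell(\mathcal{M}_u'\cup\mathcal{M}_v)\subsetneq\ell(\mathcal{M})$, contradicting minimality of $\mathcal{M}$. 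By symmetry $\mathcal{M}_v\in\mathcal{V}_2^*(T_v^\ell)$, hence $\mathcal{M}\in\{\mathcal{V}_2^*(T_u^\ell)\cup\mathcal{V}_2^*(T_v^\ell)\}$.

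If $e\in\mathcal{M}$, set $\widehat{\mathcal{M}}_x=\mathcal{M}_x\cup\{e\}$ for $x\in\{u,v\}$. Because $\mathcal{M}$ is a $2$-matching of $T^\ell$ containing $e=uv$, the vertex $v$ meets at most one edge of $\mathcal{M}_v$ (and $u$ at most one of $\mathcal{M}_u$), so $\widehat{\mathcal{M}}_u$ is a $2$-matching of $T_u^\ell+e$ containing $e$, similarly $\widehat{\mathcal{M}}_v$, and $\mathcal{M}=\widehat{\mathcal{M}}_u\cup\widehat{\mathcal{M}}_v$ with the two pieces meeting only in $e$. To get minimality of $\widehat{\mathcal{M}}_u$ in $T_u^\ell+e$, suppose a $2$-matching $\mathcal{M}_u'$ of $T_u^\ell+e$ has $|\mathcal{M}_u'|=|\widehat{\mathcal{M}}_u|$ and $\ell(\mathcal{M}_u')\subsetneq\ell(\widehat{\mathcal{M}}_u)$. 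Then $\mathcal{M}_u'\cup\mathcal{M}_v$ is still a $2$-matching of $T^\ell$: the only vertex common to the edge sets of $T_u^\ell+e$ and $T_v^\ell$ is $v$, which is incident with at most the single edge $e$ inside $T_u^\ell+e$ and with at most one edge of $\mathcal{M}_v$. As $\mathcal{M}_u'$ and $\mathcal{M}_v$ are edge-disjoint, $|\mathcal{M}_u'\cup\mathcal{M}_v|=|\widehat{\mathcal{M}}_u|+|\mathcal{M}_v|=|\mathcal{M}|$, while $\ell(\mathcal{M}_u'\cup\mathcal{M}_v)=\ell(\mathcal{M}_u')\cup\ell(\mathcal{M}_v)\subsetneq\ell(\widehat{\mathcal{M}}_u)\cup\ell(\mathcal{M}_v)=\ell(\mathcal{M})$, using $V(T)=V(T_u)\sqcup V(T_v)$; this contradicts minimality of $\mathcal{M}$. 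Hence $\widehat{\mathcal{M}}_u\in\mathcal{V}_2^*(T_u^\ell+e)^e$ and, symmetrically, $\widehat{\mathcal{M}}_v\in\mathcal{V}_2^*(T_v^\ell+e)^e$, so $\mathcal{M}\in\{\mathcal{V}_2^*(T_u^\ell+e)^e\cup\mathcal{V}_2^*(T_v^\ell+e)^e\}$. Combining the two cases gives the claimed inclusion. The main obstacle is exactly this second case: one must check that replacing $\widehat{\mathcal{M}}_u$ by $\mathcal{M}_u'$ --- which need not contain $e$ --- and regluing with the $T_v$-side still yields a legal $2$-matching of $T^\ell$, and this works precisely because $e\in\mathcal{M}$ leaves $v$ with a free incidence slot on the $T_v$-side; the rest is routine set bookkeeping, the only delicate point being that the loop sets must be compared verbatim, which is legitimate since $T_x^\ell+e$ carries loops only at vertices of $T_x$.
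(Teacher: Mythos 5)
Your proof is correct, and its skeleton matches the paper's: split on whether $e\in\mathcal{M}$, restrict $\mathcal{M}$ to the two sides of $e$, and establish minimality of each restriction by regluing a hypothetical better $2$-matching with the untouched side to contradict the minimality of $\mathcal{M}$. The genuine difference is how you reglue when $e\in\mathcal{M}$. The paper pairs the replacement $\mathcal{M}'_u$ with $\mathcal{M}\cap E(T_v^\ell+e)$, which still contains $e$; it must then argue that $e\notin\mathcal{M}'_u$ (otherwise an immediate contradiction), observe that the union then has size $|\mathcal{M}|+1$, and delete a loop or an edge of $\mathcal{M}'_u$ before contradicting minimality --- a deletion that in fact has to be made at $u$ whenever $\mathcal{M}'_u$ has two edges there, since otherwise the glued set has three incidences at $u$; the paper leaves this choice implicit. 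You instead pair $\mathcal{M}'_u$ with the $e$-free part $\mathcal{M}\cap E(T_v^\ell)$, so the glued set is automatically a legal $2$-matching of $T^\ell$ of size exactly $|\mathcal{M}|$ with strictly smaller loop set, whether or not $e\in\mathcal{M}'_u$; this removes the case split on $e\in\mathcal{M}'_u$, the size discrepancy, and the deletion step, and your explicit remark that $T_x^\ell+e$ carries loops only at vertices of $T_x$ is exactly what makes the loop-set comparison legitimate. In short: same decomposition, but a cleaner exchange argument that quietly repairs the delicate (unstated) point in the published proof, at no loss of generality.
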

\begin{proof}
Let $\mathcal{M}$ be a minimal $2$-matching of $T^\ell$.
First, assume that $e\in\mathcal{M}$ and let $\mathcal{M}_u=\mathcal{M}\cap E(T_u^\ell+e)$ and $\mathcal{M}_v=\mathcal{M}\cap E(T_v^\ell+e)$.
As $\mathcal{M}=\mathcal{M}_u\cup \mathcal{M}_v$, it is enough to prove that $\mathcal{M}_u$ is a minimal $2$-matching of $T_u^\ell+e$.
Assume that $\mathcal{M}_u$ is not minimal. 
Thus there exists $\mathcal{M}'_u\in\mathcal{V}_2(T_u^\ell+e)$ such that 
$\ell(\mathcal{M}'_u)\subsetneq\ell(\mathcal{M}_u)$ and $|\mathcal{M}'_u|=|\mathcal{M}_u|$.
Note that $\ell(\mathcal{M}'_u\cup \mathcal{M}_v)\subsetneq \ell(\mathcal{M})$ and
\[
|\mathcal{M}'_u\cup \mathcal{M}_v|=\begin{cases}
|\mathcal{M}|+1&\mathrm{if\ }e\not\in \mathcal{M}'_u,\\
|\mathcal{M}|&\mathrm{if\ }e\in \mathcal{M}'_u.\end{cases}
\]
Thus, since $\mathcal{M}$ is minimal, $e\not\in \mathcal{M}'_x$ and $|\mathcal{M}'_u\cup\mathcal{M}_v|=|\mathcal{M}|+1$.
If we remove one loop (or an edge different from $e$) of $\mathcal{M}'_u$, then we get a $2$-matching $\mathcal{M}''_u$ of $T_u^\ell$
such that $|\mathcal{M}''_u\cup\mathcal{M}_v|=|\mathcal{M}|$ and $\ell(\mathcal{M}''_u\cup\mathcal{M}_v)\subsetneq\ell(\mathcal{M})$ 
which also contradicts the minimality of $\mathcal{M}$.
Thus, $\mathcal{M}_u$ is minimal on $T_u^\ell+e$.
As $e\in\mathcal{M}_u,\mathcal{M}_v$, $\mathcal{M}\in \mathcal{V}_2^*(T_u^\ell+e)^e\cup \mathcal{V}_2^*(T_v^\ell+e)^e$.

Finally, if we assume that $e\not\in \mathcal{M}$ and $\mathcal{M}_u=\mathcal{M}\cap T_u$ and $\mathcal{M}_v\cap T_v$, 
then the minimality of $\mathcal{M}_u$ and $\mathcal{M}_v$ can be deduced in a similar way.
\end{proof}


\section{Critical Ideals of Trees}\label{trees}
This section is devoted to establishing a relationship between the generators of the critical ideals of a tree $T$ and the $2$-matchings of $T^\ell$.
This relationship allows giving a complete and compact combinatorial description of the critical ideals of $T$.
Moreover, we prove that the critical ideals of $T$ are generated by the set of minimal $2$-matchings of $T^\ell$.

Since the $j$-critical ideal of a graph $G$ is generated by the $j$-minors of their generalized Laplacian matrix, 
then it only depends on the non-vanishing $j$-minors of $L(G,X_G)$.
Therefore, we begin by giving a description of the non-vanishing $j$-minors of the generalized Laplacian matrix of a tree.


\begin{subsection}{The non-vanishing minors of $L(T,X_T)$}
In this subsection we prove that the non-vanishing $j$-minors of $L(T,X_T)$ correspond to the $2$-matchings of $T$ of size $j$.
We begin by introducing some notation. 

Given a $2$-matching $\mathcal{M}$ of $T^\ell$, we associate to it the sets $t(\mathcal{M}),h(\mathcal{M})\subset V(T)$ as follows:
First, if $\mathcal{M}=\{v_{j_1}v_{j_2},v_{j_2}v_{j_3},\ldots,v_{j_m}v_{j_{m+1}}\}$ is a path, then 
\[
h(\mathcal{M})=\{v_{j_2},\ldots,v_{j_{m+1}}\}\text{ and } t(\mathcal{M})=\{v_{j_1},\ldots,v_{j_m}\}.
\]
That is, if $\overrightarrow{\mathcal{M}}$ is one of the two oriented paths obtained from $\mathcal{M}$,
then $h(\mathcal{M})$ are the heads and $t(\mathcal{M})$ the tails of their arcs.
Moreover, if $\mathcal{M}$ is non-connected and $\{\mathcal{M}_1,\ldots,\mathcal{M}_k\}$ are its connected components, 
then $h(\mathcal{M})=\cup_{s=1}^m h(\mathcal{M}_i)$ and $t(\mathcal{M})=\cup_{s=1}^m t(\mathcal{M}_i)$.

On the other hand, given a $2$-matching $\mathcal{M}$ of $T$, let $L(T,X)[t(\mathcal{M}),h(\mathcal{M})]$ be a square submatrix of $L(T,X)$. 
Clearly, $L(T,X)[t(\mathcal{M}),h(\mathcal{M})]$ has size $|\mathcal{M}|=\big|t(\mathcal{M})\big|=\big|h(\mathcal{M})\big|$. 
Let $a_\mathcal{M}$ be the leading coefficient of $\det\left(L(T,X)[t(\mathcal{M}),h(\mathcal{M})]\right)$ and 
\[
d(\mathcal{M},X)=\left\{\begin{array}{rl}
\det\left(L(T,X)[h(\mathcal{M}),t(\mathcal{M})]\right)&\textrm{if }a_\mathcal{M}>0,\\
-\det\left(L(T,X)[h(\mathcal{M}),t(\mathcal{M})]\right)&\textrm{if }a_\mathcal{M}<0.
\end{array}\right.
\]
Thus $d(\mathcal{M},X)$ is a generator of the $|\mathcal{M}|$-critical ideal of $T$.
As we will see in Lemma~\ref{Lemmarel}, $d(\mathcal{M},X)$ does not depend on the orientation of the paths on $\mathcal{M}$.
That is, the correspondence $\mathcal{M} \longmapsto  d(\mathcal{M},X)$ between $\mathcal{V}_2(T^\ell)$ and $\mathbb{Z}[X_G]$ is well defined. 
The next example will illustrate this correspondence between $2$-matchings and generators of the critical ideals of $T$.

\begin{Example}\label{Exam1}
Let $T$ be the tree given in Figure \ref{cat}. 
Is not difficult to see that 
\[
\mathcal{M}=\{v_1v_1,v_3v_2,v_2v_5,v_7v_6,v_6v_8,v_9v_9\}
\]
is a $2$-matching of $T^\ell$. 
Moreover, the paths $P_1=v_3v_2v_5$ and $P_2=v_7v_6v_8$ and the loops $L_1=v_1v_1$ and $L_2=v_9v_9$ are the connected components of $\mathcal{M}$.
Since $h(P_1)=\{v_2,v_5\}$, $t(P_1)=\{v_3,v_2\}$, $h(P_2)=\{v_6,v_8\}$, $t(P_2)=\{v_7,v_6\}$, $h(L_1)=\{v_1\}=t(L_1)$, and $h(L_2)=\{v_1\}=t(L_2)$, then
$h(\mathcal{M})=\{v_1,v_2,v_5,v_6,v_8,v_9\}$ and $t(\mathcal{M})=\{v_1,v_2,v_3,v_6,v_7,v_9\}$.
Thus 
\[
L(T,X)[h(\mathcal{M}),t(\mathcal{M})]=\left(\begin{array}{cccccc}
x_1&-1&0&0&0&0\\
-1&x_2&-1&0&0&0\\
0&-1&0&0&0&0\\
0&0&-1&x_6&-1&-1\\
0&0&0&-1&0&0\\
0&0&0&-1&0&x_9\\
\end{array}\right)
\]
and $d(\mathcal{M},X)=x_1x_9$.
\end{Example}

Moreover, as next lemma shows, $d(\mathcal{M},X)$ only depends on the loops of $\mathcal{M}$.

\begin{Lemma}\label{Lemmarel}
If $T$ is a tree and $\mathcal{M}$ a $2$-matching of $T^\ell$, then 
\[
d(\mathcal{M}, X)=\det(L(T[V(\ell(\mathcal{M}))],X)),
\]
where $T[V(\ell(\mathcal{M}))]$ is the subgraph of $T$ induced by the vertices of the loops in $\mathcal{M}$.
In particular, $d(\mathcal{M}, X)=\prod_{v\in V(\ell(\mathcal{M}))}x_v+\textit{``\itshape terms of lower degree''}$.
\end{Lemma}
\begin{proof}
First, is not difficult to prove that if $\mathcal{P}$ is a disjoint union of paths in a tree, then
\[
L(T,X)[h(\mathcal{P}),t(\mathcal{P})]\sim \left(\begin{array}{ccc}
1& * & * \\
0 & \ddots & * \\
0 & 0 & 1 \end{array}\right).
\]
Thus
\[
L(T,X)[h(\mathcal{M}),t(\mathcal{M})]\sim \left(\begin{array}{cccc}
L(T,X)[\ell(\mathcal{M}),\ell(\mathcal{M})]& * & * & * \\
0& 1& * & * \\
0& 0& \ddots & * \\
0& 0 & 0 & 1 \end{array}\right),
\]
where $\ell(\mathcal{M})=\{u_1,\ldots,u_r\}$ is the set of loops of $M$. Since
\[
L(T,X)[\ell(\mathcal{M}),\ell(\mathcal{M})]=\left(\begin{array}{ccc}
x_{u_1} & * & *\\
* & \ddots & * \\
* & * & x_{u_r}\end{array}\right)\]
and $\det\left(L(T,X)[h(\mathcal{M}),t(\mathcal{M})]\right)=\det\left(L(T,X)[l(\mathcal{M}),l(\mathcal{M})]\right)$, 
and now the result is clear.
\end{proof}

The next lemma is a partial converse of the previous result.

\begin{Lemma}\label{sobrematch}
If $f(X)$ is a non-vanishing minor of $L(T,X)$, 
then there exists an $\mathcal{M}\in\mathcal{V}_2(T^\ell)$ such that either $f(X)=d(\mathcal{M},X)$ or $f(X)=-d(\mathcal{M},X)$.
\end{Lemma}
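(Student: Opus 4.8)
The plan is to prove the statement by induction on the size of the square submatrix of $L(T,X)$ whose determinant is $f(X)$, and, within each size, to peel off one row and one column by a Laplace expansion, reading off the surviving tree edges as the arcs of a $2$-matching. Concretely, suppose $f(X)=\det\left(L(T,X)[R,C]\right)$ for index sets $R,C\subseteq V(T)$ with $|R|=|C|=j$, and that $f(X)\not\equiv 0$. Because $T$ is a tree, it has a leaf $a$; I would argue that I may assume $a\in R\cup C$ (otherwise delete $a$ and apply induction, since $f$ is then a non-vanishing minor of $L(T\setminus a,X)$), and in fact, after transposing if necessary (legitimate since $L(T,X)$ is symmetric, so $\det L(T,X)[R,C]=\det L(T,X)[C,R]$ and this does not affect the leading coefficient's sign handling), I may assume $a\in R$. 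Let $b$ be the unique neighbour of $a$ in $T$; the row of $L(T,X)[R,C]$ indexed by $a$ has at most two nonzero entries, namely $x_a$ in column $a$ (if $a\in C$) and $-1$ in column $b$ (if $b\in C$).

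The induction then splits into the natural cases determined by which of these entries are present and nonvanishing after the Laplace expansion along row $a$. \textbf{Case 1:} $a\in C$ and the complementary minor $\det L(T,X)[R\setminus a, C\setminus a]$ does not vanish; then $f(X) = x_a \cdot \det L(T,X)[R\setminus a,C\setminus a] \pm (\text{the } b\text{-term})$, but one checks the $b$-term either vanishes or has strictly smaller degree, and by induction $\det L(T,X)[R\setminus a,C\setminus a]=d(\mathcal{M}',X)$ for some $\mathcal{M}'\in\mathcal{V}_2((T\setminus a)^\ell)$ or $\mathcal{V}_2(T^\ell)$ avoiding $a$; set $\mathcal{M}=\mathcal{M}'\cup\{aa\}$, which is still a $2$-matching of $T^\ell$, with $\ell(\mathcal{M})=\ell(\mathcal{M}')\cup\{a\}$, and verify $d(\mathcal{M},X)=x_a\cdot d(\mathcal{M}',X)$ matches $f$ (after matching leading-coefficient signs, using that $a$ contributes the factor $x_a$ to the leading term, so the sign convention in the definition of $d$ is preserved). \textbf{Case 2:} $b\in C$ and the relevant complementary minor is nonzero; then the dominant contribution is $(-1)\cdot\det L(T,X)[R\setminus a,C\setminus b]$, and by induction this is (up to sign) $d(\mathcal{M}',X)$ for a $2$-matching $\mathcal{M}'$ of $(T\setminus a)^\ell$; I then adjoin the edge $ab$ to $\mathcal{M}'$ and check that the result is a $2$-matching of $T^\ell$ (here one uses that after deleting row $a$ and column $b$, the vertex $b$ had at most one incident tail/head already, so $b$ can absorb the new arc $ab$ — this is exactly the $h/t$ bookkeeping from the paragraph preceding Example~\ref{Exam1}) and that $d(\mathcal{M},X)=-\det L(T,X)[R\setminus a,C\setminus b]$ up to the sign convention. \textbf{Degenerate subcases:} if both entries of row $a$ vanish then $f\equiv 0$, contradiction; if $a\notin C$ and $b\notin C$ similarly; the case $a\in C$ but the complementary minor vanishes forces the $b$-term to be the whole answer, reducing to Case 2's computation; and one must handle the base case $j=1$ directly, where $f=x_v$ corresponds to the single loop $vv$, or $f=-1$... but $-1$ has a negative leading coefficient, so it is excluded, and indeed no $1$-minor equal to $-1$ arises as $d(\mathcal{M},\cdot)$ — this is consistent with the hypothesis "positive leader coefficient."

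I expect the main obstacle to be the sign bookkeeping: the definition of $d(\mathcal{M},X)$ normalizes by the sign $a_\mathcal{M}$ of the leading coefficient of $\det L(T,X)[t(\mathcal{M}),h(\mathcal{M})]$ (note the transpose relative to the matrix used in $d$!), and I must check that the $2$-matching $\mathcal{M}$ constructed by the induction indeed has $a_\mathcal{M}>0$ precisely when the sign of $f$'s leading coefficient works out, so that $f(X)=d(\mathcal{M},X)$ on the nose rather than $f(X)=-d(\mathcal{M},X)$. The cleanest way around this is probably to invoke Lemma~\ref{Lemmarel}: it pins down the leading term of $d(\mathcal{M},X)$ as exactly $x_{\ell(\mathcal{M})}$ (monic), so it suffices to show that the $2$-matching $\mathcal{M}$ produced by the Laplace expansion has $\ell(\mathcal{M})$ equal to the set of variables appearing in the leading monomial of $f(X)$, together with the fact that both $f$ and $d(\mathcal{M},X)$ are then monic minors of the same matrix sharing a leading term — combined with a dimension/degree count this forces equality. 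A secondary subtlety is ensuring the object produced at each inductive step is genuinely a $2$-matching of $T^\ell$ (not merely of a subtree's loop-graph), which is where Proposition~\ref{minmax} and the recursive structure of $\mathcal{V}_2^*$ may be invoked to keep the construction inside $\mathcal{V}_2(T^\ell)$; but since the statement only asks for membership in $\mathcal{V}_2(T^\ell)$, not minimality, this is routine.
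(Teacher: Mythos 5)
Your overall strategy (induction on the minor via Laplace expansion at a leaf) is workable in principle, but as written the argument has a genuine gap at its central step: the way you dispose of the second term of the expansion. When $a\in R$ and both $a$ and $b$ lie in $C$, the minor is honestly a sum of two terms, $f=\pm x_a\det L(T,X)[R\setminus a,C\setminus a]\pm\det L(T,X)[R\setminus a,C\setminus b]$, and you discard the $b$-term because it ``has strictly smaller degree'' (and in Case 2 you keep only the ``dominant contribution''). That is not legitimate when the goal is an exact polynomial identity. In particular the equality you propose to verify in Case 1, $d(\mathcal{M}'\cup\{aa\},X)=x_a\,d(\mathcal{M}',X)$, is simply false whenever the off-diagonal $-1$ between $a$ and $b$ survives in the submatrix: already for $T=P_2$ with $V=\{a,b\}$, $R=C=\{a,b\}$, $\mathcal{M}'=\{bb\}$, one has $d(\mathcal{M}'\cup\{aa\},X)=x_ax_b-1\neq x_a\cdot x_b$. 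The correct relation is the one in Corollary~\ref{xminor}, where $x_w\,d(\mathcal{M},X)$ is a \emph{sum} of minors, not a single one. Your fallback --- that $f$ and $d(\mathcal{M},X)$ are monic minors with the same leading term, so a ``dimension/degree count forces equality'' --- is not an argument: nothing available at this point shows a minor of $L(T,X)$ is determined by its leading monomial (via Lemma~\ref{Lemmarel}, that is essentially equivalent to the lemma being proved). A second, related gap is that your inductive hypothesis is too weak: to know that $\mathcal{M}'\cup\{aa\}$ or $\mathcal{M}'\cup\{ab\}$ is a $2$-matching and that its $d$ equals $f$, you need the inductive $\mathcal{M}'$ to have head/tail sets equal to the row/column sets of the smaller minor (so that it avoids $a$ and leaves $b$ with a free slot); the bare statement of the lemma only gives \emph{some} $2$-matching with the right polynomial, and your parenthetical ``or $\mathcal{V}_2(T^\ell)$ avoiding $a$'' is precisely what has to be built into a strengthened statement such as: for every non-vanishing $\det L(T,X)[R,C]$ there is $\mathcal{M}\in\mathcal{V}_2(T^\ell)$ with $\{t(\mathcal{M}),h(\mathcal{M})\}=\{R,C\}$. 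With that formulation the sign issue also evaporates, since $d(\mathcal{M},X)$ is by definition $\pm$ that very minor.

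Once the statement is strengthened in this way, the induction and the Laplace expansion become unnecessary, and this is in fact how the paper proves the lemma in one step: a non-vanishing $\det L(T,X)[I,J]$ has a nonzero permutation term; the matched pairs $(i_r,j_r)$ are loops or edges of $T$, each vertex occurs at most once among the $i$'s and at most once among the $j$'s, so these edges form a $2$-matching after replacing any doubled edge $\{v_{i_r}v_{j_r}=v_{i_s}v_{j_s}\}$ by the two loops $v_{i_r}v_{i_r}$, $v_{i_s}v_{i_s}$; the resulting $\mathcal{M}$ has $\{t(\mathcal{M}),h(\mathcal{M})\}=\{I,J\}$, and the sign normalization in the definition of $d$ together with the positive-leading-coefficient hypothesis gives $f(X)=d(\mathcal{M},X)$. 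If you want to keep your leaf-expansion route, you must (i) strengthen the induction as above, (ii) handle the two-term expansions by matching index sets rather than by comparing individual products of the form $x_a\,d(\mathcal{M}',X)$, and (iii) treat the degenerate subcases (e.g.\ $a\in C$ with $ab$ already forced into $\mathcal{M}'$) explicitly; as it stands, the proof does not go through.
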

\begin{proof}
Let $I,J\subseteq V(T)$ be such that $|I|=|J|\neq 0$ and $f(X)=\det\big(L(T,X)[I,J]\big)$.
Since $f(X)$ is non-zero, we can assume that all the entries in the main diagonal of $L(T,X)[I,J]$ are different from zero.
Now, let
\[
\mathcal{N}=\{v_{i_1}v_{j_1},\ldots,v_{i_t}v_{j_t}\},
\]
where $I=\{i_1,\ldots,i_t\}$ and $J=\{j_1,\ldots,j_t\}$.
Since $i_r\neq i_s$ and $j_r\neq j_s$ for all $r\neq s$, then $\mathcal{N}$ is a $2$-matching of $T^\ell$ with $|\mathcal{N}|\leq t=|I|$. 
If $|\mathcal{N}|< t$, then there exists $1\leq r<s\leq t$ such that $v_{i_r}v_{j_r}=v_{i_s}v_{j_s}$.
Since $i_r \neq i_s$, $i_r=j_s$, $j_r=i_s$ and $\mathcal{N}'=\mathcal{N} \cup \{v_{i_r}v_{j_s},v_{i_s}v_{j_r}\}\setminus \{v_{i_r}v_{j_r},v_{i_s}v_{j_s}\}$,
is a $2$-matching $\mathcal{N}'$ of $T^\ell$ with $|\mathcal{N}'|=|\mathcal{N}|+1$.
We can repeat this process until we get a $2$-matching $\mathcal{M}$ of size $t$ such that $I=t(\mathcal{M})$ and $J=h(\mathcal{M})$.
That is, $d(\mathcal{M}, X)=f(X)$.
\end{proof}

Lemma~\ref{sobrematch} gives us the following description of the critical ideals of $T$.
\begin{Corollary}\label{description1}
If $T$ is a tree, then 
\[
I_j(T, X)=\langle d(\mathcal{M}, X) \, | \, \mathcal{M}\in\mathcal{V}_2(T^\ell) \text{ with } |\mathcal{M}|=j\rangle.
\]
\end{Corollary}

Unfortunately, this description is not minimal.
For instance, is not difficult to find a tree $T$ and $2$-matchings $\mathcal{M}\neq \mathcal{N}$ of $T^\ell$ with $\ell(\mathcal{M})=\ell(\mathcal{N})$.
That is, $d(\mathcal{M}, X)=d(\mathcal{N}, X)$ (Lemma~\ref{Lemmarel}) and therefore the previous description of $I_j(T, X)$ contains repeated elements.
Moreover, the minors of $L(T,X)$ are related by very complex algebraic identities.
\end{subsection}

In what follows we exploit the combinatorial structure of $T$ in order to develop some identities between 
the minors of $L(T,X)$ which allows finding a better description of the critical ideals of a tree.
Before presenting the first of these identities, we fix some notation.
For any graph $G$, let $d(G,X)=\det(L(G,X))$.

\begin{Lemma}\label{TS}
If $T$ is a tree and $S\subseteq E(T)$, then
\[
d(T\setminus S,X)=\sum_{\mu\in \mathcal{V}_1(S)} d(T\setminus\!\! V(\mu),X),
\]
where $\mathcal{V}_1(S)$ is the set of matchings of the subgraph of $T$ induced by $S$.
\end{Lemma}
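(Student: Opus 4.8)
The plan is to prove the identity by induction on $|S|$, and to do so it is convenient to allow $T$ to be a forest rather than only a tree: this costs nothing, since $\det L$ of a disjoint union of graphs is the product of the $\det L$'s of the components while both $\mathcal{V}_1$ and vertex/edge deletions respect components, and in any case both $T\setminus S$ and $T\setminus V(\mu)$ are forests even when $T$ is a tree. When $S=\emptyset$ there is nothing to do: $\mathcal{V}_1(\emptyset)=\{\emptyset\}$ and $T\setminus V(\emptyset)=T$, so both sides equal $d(T,X)$.

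For the inductive step, fix an edge $e=uv\in S$ and set $S'=S\setminus\{e\}$. The first and main point is the single-edge recurrence
\[
d(T\setminus S,X)=d(T\setminus S',X)+d\big((T\setminus S')\setminus\{u,v\},\,X\big),
\]
where $(T\setminus S')\setminus\{u,v\}$ denotes deletion of the two \emph{vertices} $u,v$. To prove it I would compare $L(T\setminus S,X)$ with $L(T\setminus S',X)$: these matrices agree except that the $(u,v)$ and $(v,u)$ entries, both equal to $-1$ in $L(T\setminus S',X)$, are replaced by $0$ in $L(T\setminus S,X)$. Expanding $\det L(T\setminus S',X)$ over permutations $\sigma$ of $V(T)$, a term is unaffected by this change unless $\sigma(u)=v$ or $\sigma(v)=u$. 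Since $T\setminus S'$ is a forest it has no cycle of length $\ge 3$, so a non-vanishing term with $\sigma(u)=v$ or $\sigma(v)=u$ must in fact have $(u\,v)$ as a $2$-cycle of $\sigma$; collecting these terms contributes exactly $-\det L(T\setminus S',X)[V(T)\setminus\{u,v\}]=-d\big((T\setminus S')\setminus\{u,v\},X\big)$ to $\det L(T\setminus S',X)$, which rearranges to the recurrence.

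It then remains to recognize the right-hand side in the recurrence as the claimed sum. Put $S''=S\setminus(\delta_T(u)\cup\delta_T(v))$, the edges of $S$ with no endpoint in $\{u,v\}$; then $(T\setminus S')\setminus\{u,v\}=(T\setminus\{u,v\})\setminus S''$ and $|S'|,|S''|<|S|$, so the induction hypothesis applies both to $(T,S')$ and to $(T\setminus\{u,v\},\,S'')$. Combinatorially, $\mathcal{V}_1(S)$ is the disjoint union of the matchings of $S$ not using $e$ — which are exactly the elements of $\mathcal{V}_1(S')$ — and the matchings of $S$ using $e$, which are precisely the sets $\{e\}\cup\nu$ with $\nu\in\mathcal{V}_1(S'')$; moreover $T\setminus V(\{e\}\cup\nu)=(T\setminus\{u,v\})\setminus V(\nu)$. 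Substituting the two instances of the induction hypothesis into the recurrence therefore produces exactly $\sum_{\mu\in\mathcal{V}_1(S)}d(T\setminus V(\mu),X)$, completing the induction.

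I expect the single-edge recurrence to be the only genuine obstacle; the key observation that makes it go through is that a forest has no cycle of length $\ge 3$, which forces every "bad" permutation term to collapse onto the single transposition $(u\,v)$. Everything after that is bookkeeping about how matchings of $S$ decompose relative to the chosen edge $e$.
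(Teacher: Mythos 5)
Your proof is correct, and its overall skeleton --- induction on $|S|$, peeling off one edge $e=uv$, the single-edge recurrence $d(T\setminus S,X)=d(T\setminus S',X)+d\big((T\setminus S')\setminus\{u,v\},X\big)$, and the decomposition of $\mathcal{V}_1(S)$ into matchings avoiding $e$ and matchings of the form $\{e\}\cup\nu$ with $\nu\in\mathcal{V}_1(S'')$ --- is exactly the paper's. The one genuine difference is how the single-edge recurrence is justified: the paper derives $d(T\setminus uv,X)=d(T,X)+d(T\setminus\{u,v\},X)$ from the matching expansion $d(T,X)=\sum_{\mu\in\mathcal{V}_1(T)}(-1)^{|\mu|}\prod_{v\notin V(\mu)}x_v$ cited from an earlier paper, whereas you prove it from scratch by comparing the permutation expansions of the two determinants and using acyclicity of the forest to force every affected permutation to contain the transposition $(u\,v)$. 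Your route is more self-contained and, usefully, you make explicit something the paper glosses over: the induction must be run over forests, not just trees, both because $T\setminus S'$ and $T\setminus\{u,v\}$ are generally disconnected and because the paper's inductive step silently applies the one-edge case to the forest $T'=T\setminus S'$. The sign bookkeeping in your determinant computation (the factor $(-1)(-1)$ from the two off-diagonal entries against $\mathrm{sgn}(\sigma)=-\mathrm{sgn}(\tau)$) and the identification $(T\setminus S')\setminus\{u,v\}=(T\setminus\{u,v\})\setminus S''$ both check out, so nothing is missing.
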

\begin{proof}
We use induction on $|S|$. 
First, let $S=\{uv\}$.
Since 
\[
\mathcal{V}_1(T)=\mathcal{V}_1(T\setminus uv)\bigcup\big\{\{uv\}\cup\mu\big|\mu\in\mathcal{V}_1(T\setminus \{u,v\})\big\}
\]
and $d(T,X)=\sum_{\mu\in\mathcal{V}_1(T)} (-1)^{|\mu|}\prod_{v\not\in V(\mu)} x_v,$ (\cite[Lemma 4.4]{Corrales}) 
we get the result, that is, $d(T\setminus uv,X)=d(T,X)+d(T\setminus\{u,v\},X)$.

Now, let $S=\{uv\}\cup S'$ with $|S'|>0$. 
If $T'=T\setminus S'$, then by the induction hypothesis
\[
d(T\setminus S,X)=d(T'\setminus uv,X)=d(T',X)+d(T'\setminus\{u,v\},X)
\]
and $d(T',X)=d(T\setminus S',X)=\sum_{\mu\in\mathcal{V}_1(S')} d(T\setminus V(\mu),X)$. 
On the other hand, since $T'\setminus\{u,v\}=T_{u,v}\setminus S''$, where $T_{u,v}=T\setminus\{u,v\}$ and $S''=\{e\in S' \, | \, u,v\not\in V(e)\}$, 
\[
d(T'\setminus\{u,v\},X)=d(T_{u,v}\setminus S'',X)=\sum_{\mu\in\mathcal{V}_1(S'')} d(T_{u,v}\setminus V(\mu),X).
\]
Moreover, since $\mathcal{V}_1(S)=\mathcal{V}_1(S')\bigcup\big\{\{uv\}\cup\mu\big|\mu\in\mathcal{V}(S'')\big\}$,
\[
d(T\setminus S,X)=\sum_{\mu\in\mathcal{V}_1(S')} d(T\setminus V(\mu),X)+\sum_{\mu\in\mathcal{V}_1(S'')} d(T_{u,v}\setminus V(\mu),X)
=\sum_{\mu\in \mathcal{V}_1(S)} d(T\setminus\!\! V(\mu),X).
\]
\end{proof}

The previous lemma is a fundamental result of this chapter, in fact almost all the identities 
between the generators of the critical ideals of a tree are derived from it.
For instance, we have the next corollary:

\begin{Corollary}\label{xminor}
If $\mathcal{M}$ is a $2$-matching of $T^\ell$ and $w$ a vertex such that $ww\notin \ell(\mathcal{M})$, then
\[
x_w\,d(\mathcal{M},X)= d(\mathcal{N},X) +\ \sum_{v\in U} d(\mathcal{M}\setminus\{vv\},X),
\]
where $\mathcal{N}=\{uv \, | \, uv \in \mathcal{M} \text{ and } u,v \neq w\} \cup\{ww\}$ and $U=\{v\in V(T)\,|\, vv\in \ell(\mathcal{M}),vw\in E(T)\}$.
\end{Corollary}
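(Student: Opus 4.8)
The plan is to derive Corollary~\ref{xminor} directly from Lemma~\ref{TS} applied to a carefully chosen tree, after observing that multiplying a minor by the variable $x_w$ has a clean combinatorial meaning. Concretely, $d(\mathcal{M},X)$ is (up to the sign fixed by the leading coefficient) the determinant of a submatrix of $L(T,X)$ indexed by $h(\mathcal{M})$ and $t(\mathcal{M})$. By Lemma~\ref{Lemmarel} this determinant equals $d(G,X)$ for the subgraph $G$ of $T$ obtained by deleting all vertices not covered by $\mathcal{M}$: more precisely, if $\mathcal{M}$ decomposes into paths together with loops on a vertex set $L=\ell(\mathcal{M})$, then $d(\mathcal{M},X)=d(T[L\cup\{\text{interiors}\}]\setminus S_\mathcal{M},X)$ for an appropriate edge set, and in fact after the triangularization in Lemma~\ref{Lemmarel} one reduces to $\det L(T,X)[L,L]$. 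The cleanest route, though, is to work at the level of the identity $x_w\cdot\det(L(G,X))$: since $w$ is a vertex with no loop in $\mathcal{M}$ (so $w$ is either an endpoint of a path in $\mathcal{M}$ or not covered at all), $x_w$ is the diagonal entry $L(T,X)_{w,w}$, and Laplace-type manipulations on the row and column of $w$ give the stated recursion.

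The key steps, in order, are as follows. First, I would set $G$ to be the forest underlying $\mathcal{M}$ (the disjoint union of the paths of $\mathcal{M}$) together with isolated loop-vertices, so that by Lemma~\ref{Lemmarel} we may replace $d(\mathcal{M},X)$ by $d(G',X)$ where $G'$ is $G$ with the loops removed and $x_w$ appears as an honest diagonal indeterminate. Second, I would add the vertex $w$ with its incident edges to the picture: the vertex $w$ is joined in $T$ precisely to those loop-vertices $v\in U=\{v:vv\in\ell(\mathcal{M}),\ vw\in E(T)\}$ (any other neighbour of $w$ in $T$ would, together with $w$ and $\mathcal{M}$, either be uncovered — contributing nothing — or already force $ww$ into a larger matching). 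Third, I would apply Lemma~\ref{TS} with $S$ equal to the single edge set $\{wv : v\in U\}$ — or rather iterate the one-edge case $d(T\setminus uv,X)=d(T,X)+d(T\setminus\{u,v\},X)$ — to the tree $\widehat{T}$ obtained from $G'$ by adjoining $w$ and the edges $wv$, $v\in U$. Expanding, the matching-free term is $d(\widehat T\setminus\{wv:v\in U\},X)=x_w\,d(\mathcal{M},X)$ (the component $\{w\}$ splits off as an isolated vertex contributing the factor $x_w$), while each matching $\mu$ of the induced star consists of a single edge $\{wv\}$ with $v\in U$, and deleting $V(\mu)=\{w,v\}$ kills both $w$ and the loop-vertex $v$, leaving exactly $d(\mathcal{M}\setminus\{vv\},X)$; the term where we instead keep the edge $wv$ in place and drop the loop at $v$, i.e.\ replace $vv$ by $vw$, reassembles the path of $\mathcal{M}$ through $v$ extended by $w$, which is the $2$-matching $\mathcal{N}=\{uv:uv\in\mathcal{M},\ w\neq u,v\}\cup\{ww\}$ — this is where $\mathcal{N}$ enters. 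Collecting terms gives the claimed identity.

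The main obstacle I expect is bookkeeping the sign conventions and the precise shape of $\mathcal{N}$: because $d(\mathcal{M},X)$ is defined with a sign depending on the leading coefficient $a_\mathcal{M}$, and because passing from $\mathcal{M}$ to $\mathcal{N}$ changes which vertex plays the role of a head versus a tail (the loop $ww$ becomes a genuine arc terminus), one must check that all the signs line up so that the coefficients in the recursion are all $+1$ as stated. This is exactly the kind of verification that Lemma~\ref{Lemmarel} was set up to handle — since every relevant determinant is monic in its top-degree term after triangularizing the path part, the sign of $d(\cdot,X)$ is forced and the additive identity over $\mathbb{Z}[X_T]$ follows by comparing with the purely combinatorial matching expansion $d(T,X)=\sum_{\mu\in\mathcal{V}_1(T)}(-1)^{|\mu|}\prod_{v\notin V(\mu)}x_v$ from \cite[Lemma 4.4]{Corrales}. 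A secondary, smaller point is to argue that no neighbour of $w$ in $T$ outside $U$ contributes: if $vw\in E(T)$ but $vv\notin\ell(\mathcal{M})$, then either $v\notin V(\mathcal{M})$ and that whole component is outside the submatrix we are computing, or $v$ is an interior/endpoint vertex of a path of $\mathcal{M}$, in which case the corresponding off-diagonal entry does not appear in $L(T,X)[h(\mathcal{M}),t(\mathcal{M})]$ once $w$ is adjoined — so such edges are simply absent from the star we expand over.
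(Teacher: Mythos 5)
Your overall strategy is the right one and is in fact the paper's: reduce $d(\cdot,X)$ to the principal submatrix indexed by loop vertices via Lemma~\ref{Lemmarel}, then apply Lemma~\ref{TS} with $S$ equal to the star of edges at $w$, and identify the three families of terms. But the execution as written has two genuine errors that prevent the computation from producing the stated identity. First, the auxiliary graph is wrong. The paper applies Lemma~\ref{TS} to $T'=T[\ell(\mathcal{N})]$, the subgraph of $T$ induced by the loop vertices of $\mathcal{M}$ together with $w$ --- in particular it contains \emph{all} edges of $T$ between loop vertices and contains \emph{no} vertices of the paths of $\mathcal{M}$. Your $\widehat{T}$ is built from ``the disjoint union of the paths of $\mathcal{M}$ together with isolated loop-vertices'' plus the star at $w$. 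With that graph, $d(\widehat{T}\setminus S,X)$ is $x_w\prod_{v\in\ell(\mathcal{M})}x_v$ times the path determinants, which is not $x_w\,d(\mathcal{M},X)$ (take $\mathcal{M}=\{uu,vv\}$ with $uv\in E(T)$: then $d(\mathcal{M},X)=x_ux_v-1$, so the edges of $T$ between loop vertices cannot be discarded), and likewise $d(\widehat{T},X)\neq d(\mathcal{N},X)$. You do state the correct reduction ``$\det L(T,X)[L,L]$'' in passing, but the graph you actually expand over is not that one.

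Second, your accounting of the terms in the Lemma~\ref{TS} expansion is off. The expansion $d(T'\setminus S,X)=\sum_{\mu\in\mathcal{V}_1(S)}d(T'\setminus V(\mu),X)$ has exactly two kinds of terms on the right: the empty matching, giving $d(T',X)$, and the single edges $wv$ with $v\in U$, giving $d(T'\setminus\{w,v\},X)$. There is no ``keep the edge $wv$ in place and drop the loop at $v$'' term; that object (the $2$-matching $(\mathcal{M}\setminus\{vv\})\cup\{vw\}$) only appears later, in the proof of Theorem~\ref{TeoRed}, and it has the same value as $d(\mathcal{M}\setminus\{vv\},X)$, not as $d(\mathcal{N},X)$. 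The term $d(\mathcal{N},X)$ in the corollary is the empty-matching term: since $\ell(\mathcal{N})=\ell(\mathcal{M})\cup\{ww\}$, Lemma~\ref{Lemmarel} gives $d(T',X)=d(\mathcal{N},X)$, and similarly $d(T'\setminus\{w,v\},X)=d(\mathcal{M}\setminus\{vv\},X)$, while $w$ being isolated in $T'\setminus S$ gives the left side $x_w\,d(\mathcal{M},X)$. Your description of $\mathcal{N}$ as ``the path of $\mathcal{M}$ through $v$ extended by $w$'' also contradicts the statement's definition, in which $\mathcal{N}$ carries a loop at $w$ and contains no edge incident to $w$. Once you replace $\widehat{T}$ by $T[\ell(\mathcal{N})]$ and attribute $d(\mathcal{N},X)$ to the empty matching, your argument becomes exactly the paper's proof; your remarks on signs and on neighbours of $w$ outside $U$ are then superfluous, since both points are absorbed by working inside $T[\ell(\mathcal{N})]$ and by Lemma~\ref{Lemmarel}.
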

\begin{proof}
Let $T'=T[\ell(\mathcal{N})]$ and $S$ be the set of edges in $T'$ that contains $w$. 
Since $\mathcal{V}_1(S)=\emptyset \cup \{vw\, | \, vv\in \ell(\mathcal{M}), vw\in E(T)\}$, then applying Lemma~\ref{TS} to $T'$ and $S$ we get that
\[
d(T'\setminus S,X)=\sum_{\mu\in\mathcal{V}_1(S)}d(T'\setminus\! V(\mu),X)=d(T',X)\ +\ \sum_{v\in U} d(T'\setminus\!\!\{w,v\},X).
\]
Since $w$ and $T'\setminus S$ are not connected, then $d(T'\setminus S,X)=x_w d(T'\setminus w,X)=x_w d(\mathcal{M},X)$. 
On the other hand, by Lemma~\ref{Lemmarel}, $d(T',X)=d(\mathcal{N},X)$ and $d(T'\setminus\!\{w,v\},X)=d(\mathcal{M}\setminus\{vv\},X)$ for all $v\in U$. 
Combining these identities, we get the result.
\end{proof}

We will use these algebraic identities to find a compact description of the $j$-critical ideals of a 
tree and to prove that this description gives us a reduced Gr\"obner basis when $j=n-1$.
Corollary~\ref{xminor} allows us to prove one of the most important results of this article.

\begin{Theorem}\label{TeoRed}
If $T$ is a tree with $n$ vertices, then 
\[
I_j(T,X)=\left\langle \, \left\{d(\mathcal{M},X)\,| \, \mathcal{M}\in \mathcal{V}_2^*(T^\ell,j)\right\} \right\rangle \text{ for all }1\leq j\leq n.
\]
\end{Theorem}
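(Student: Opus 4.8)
The plan is to prove the two inclusions separately. One inclusion is immediate: since $\mathcal{V}_2^*(T^\ell,j)\subseteq\mathcal{V}_2(T^\ell,j)$, every generator $d(\mathcal{M},X)$ with $\mathcal{M}\in\mathcal{V}_2^*(T^\ell,j)$ is by construction a $j$-minor of $L(T,X)$, so the right-hand ideal is contained in $I_j(T,X)$. The work is in the reverse inclusion. By Lemma~\ref{sobrematch} (together with the fact that negating a matrix row changes the sign of a minor, so the leading-coefficient sign condition costs nothing), $I_j(T,X)$ is generated by all $d(\mathcal{M},X)$ with $\mathcal{M}\in\mathcal{V}_2(T^\ell,j)$. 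Hence it suffices to show that for every such $\mathcal{M}$, the element $d(\mathcal{M},X)$ lies in the ideal $J_j:=\langle d(\mathcal{N},X)\mid\mathcal{N}\in\mathcal{V}_2^*(T^\ell,j)\rangle$.

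The strategy is to induct on the number of loops $|\ell(\mathcal{M})|$, using Corollary~\ref{xminor} as the reduction engine. If $\mathcal{M}$ is already minimal we are done. If not, there is a $2$-matching $\mathcal{M}'$ of $T^\ell$ with $|\mathcal{M}'|=|\mathcal{M}|=j$ and $\ell(\mathcal{M}')\subsetneq\ell(\mathcal{M})$; pick a vertex $w$ with $ww\in\ell(\mathcal{M})\setminus\ell(\mathcal{M}')$. Intuitively, $\mathcal{M}'$ witnesses that the loop at $w$ can be "traded" for a genuine edge incident to $w$, which forces the existence of an edge $wv\in E(T)$ with $vv\in\ell(\mathcal{M})$ (so that $v\in U$ in the notation of Corollary~\ref{xminor}) — otherwise the loop at $w$ could not be replaced without exceeding degree $2$ somewhere, contradicting $|\mathcal{M}'|=|\mathcal{M}|$. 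I would first establish this combinatorial fact carefully: that non-minimality of $\mathcal{M}$ produces a vertex $w$ with $ww\in\ell(\mathcal{M})$ and some neighbour $v$ with $vv\in\ell(\mathcal{M})$. Then apply Corollary~\ref{xminor} with this $\mathcal{M}$ and $w$: since $ww\in\ell(\mathcal{M})$ we cannot apply it to $\mathcal{M}$ directly, so instead apply it to $\mathcal{M}\setminus\{ww\}$, which has fewer loops, obtaining
\[
x_w\,d(\mathcal{M}\setminus\{ww\},X)=d(\mathcal{M},X)+\sum_{v\in U}d(\mathcal{M}\setminus\{ww,vv\},X),
\]
where $U$ is nonempty by the combinatorial fact just established. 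Thus $d(\mathcal{M},X)=x_w\,d(\mathcal{M}\setminus\{ww\},X)-\sum_{v\in U}d(\mathcal{M}\setminus\{ww,vv\},X)$. Every term on the right is, up to a polynomial coefficient, a minor $d(\mathcal{M}'',X)$ with $|\mathcal{M}''|\in\{j-1,j\}$ and with strictly fewer loops than $\mathcal{M}$ — and the terms of size $j-1$ get multiplied by $x_w$, which by Lemma~\ref{Lemmarel}-type bookkeeping corresponds to re-attaching a loop, keeping us in degree $j$. The inductive hypothesis (on loop number, with $j$ fixed) then places each term in $J_j$, hence $d(\mathcal{M},X)\in J_j$.

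The main obstacle I anticipate is the bookkeeping in the induction: after the reduction, some terms have size $j-1$ and are multiplied by $x_w$, and I must argue that $x_w\,d(\mathcal{M}\setminus\{ww\},X)$ is itself expressible through $j$-minors associated to $2$-matchings of size $j$ with no more loops than $\mathcal{M}$ — essentially applying Corollary~\ref{xminor} a second time, or observing directly that $x_w\,d(\mathcal{N},X)=d(\mathcal{N}\cup\{ww\},X)+(\text{lower loop terms})$ whenever $ww\notin\ell(\mathcal{N})$ — so that the induction variable genuinely decreases. Care is needed to set up the right induction measure (for instance, lexicographic on $(\,|\ell(\mathcal{M})|,\ \text{something})$, or a well-founded order on the loop sets) so that all recursively generated terms are strictly smaller. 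Once the induction measure is chosen correctly, each step is a direct application of Corollary~\ref{xminor} and Lemma~\ref{sobrematch}, and the conclusion follows.
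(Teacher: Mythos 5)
Your overall strategy is the same as the paper's: use Lemma~\ref{sobrematch} to reduce to showing $d(\mathcal{M},X)\in J_j$ for every $\mathcal{M}\in\mathcal{V}_2(T^\ell,j)$, then peel off loops via the identity coming from Corollary~\ref{xminor}, inducting on the loop set. But two points do not survive scrutiny. First, the ``combinatorial fact'' you propose to establish first --- that non-minimality of $\mathcal{M}$ produces a loop $ww\in\ell(\mathcal{M})$ whose vertex has a neighbour $v$ with $vv\in\ell(\mathcal{M})$ (i.e.\ $U\neq\emptyset$) --- is false: in $T=P_3$ with vertices $v_1,v_2,v_3$, the $2$-matching $\mathcal{M}=\{v_1v_1,v_2v_3\}$ is not minimal (compare $\{v_1v_2,v_2v_3\}$, same size, no loops), yet the unique looped vertex $v_1$ has no looped neighbour. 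Fortunately this fact is not needed: the identity from Corollary~\ref{xminor} is perfectly valid with $U=\emptyset$, so you should simply drop that step.

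The genuine gap is the size bookkeeping that you yourself flag as the main obstacle, and your proposed fixes do not close it. Your reduction writes the $j$-minor $d(\mathcal{M},X)$ as $x_w\,d(\mathcal{M}\setminus\{ww\},X)-\sum_{v\in U}d(\mathcal{M}\setminus\{ww,vv\},X)$, whose terms are minors of sizes $j-1$ and $j-2$ (not $j-1$ and $j$ as you state); your induction hypothesis concerns size-$j$ $2$-matchings with fewer loops, so it says nothing about these terms, and the remedy you sketch is circular, since $x_w\,d(\mathcal{M}\setminus\{ww\},X)=d(\mathcal{M},X)+\sum_{v\in U}d(\mathcal{M}\setminus\{ww,vv\},X)$ is exactly the identity you started from. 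The missing idea --- the actual content of the paper's proof --- is to use Lemma~\ref{Lemmarel} (the polynomial $d(\cdot,X)$ depends only on the loop set) to keep every term realized by a $2$-matching of size exactly $j$: non-minimality furnishes a $2$-matching $\mathcal{N}$ of size $j$ with $\ell(\mathcal{N})=\ell(\mathcal{M})\setminus\{ww\}$ (this is where the extra edge comes from), one applies Corollary~\ref{xminor} to this $\mathcal{N}$ rather than to $\mathcal{M}\setminus\{ww\}$, and each summand $d(\mathcal{N}\setminus\{vv\},X)$ is replaced, again via Lemma~\ref{Lemmarel}, by $d(\mathcal{N}_{vw},X)$ with $\mathcal{N}_{vw}=(\mathcal{N}\setminus\{vv\})\cup\{vw\}$ of size $j$ and strictly smaller loop set. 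Then every term in the recursion is a minor attached to a size-$j$ $2$-matching with loop set strictly contained in $\ell(\mathcal{M})$, and induction on loop sets (well-founded under strict inclusion) terminates at minimal ones. Without this substitution your identity only expresses $d(\mathcal{M},X)$ through lower-order minors, which does not by itself yield membership in $\left\langle d(\mathcal{N},X)\mid\mathcal{N}\in\mathcal{V}_2^*(T^\ell,j)\right\rangle$.
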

\begin{proof}
By Lemma~\ref{sobrematch}, $I_j(T,X)\subseteq\{d(\mathcal{M},X)\big|\mathcal{M}\in\mathcal{V}_2(T^\ell,j)\}$.
Thus, we only need to prove that the minor of a non-minimal $2$-matching can be expressed in terms of minors associated to minimal $2$-matchings of the same size.

Let $\mathcal{M}$ be a non-minimal $2$-matching of size $j$.
Then, there is $\mathcal{N}\in \mathcal{V}_2(T,j)$ and $w\in V(T)$ such that $\ell(\mathcal{M})=\ell(\mathcal{N})\cup \{ww\}$.
Applying Proposition~\ref{xminor} to $\mathcal{N}$, we get that
\[
d(\mathcal{M},X)=x_w\,d(\mathcal{N},X)\ -\ \sum_{v\in U} d(\mathcal{N}\setminus \{vv\},X),
\]
where $U=\{v\in V(T)\,|\,vv\in\ell(\mathcal{N}), vw\in E(T)\}$.

For all $v\in U$, let $\mathcal{N}_{vw}=(\mathcal{N}\setminus \{vv\})\cup \{vw\}$.
Since $vw\not\in\mathcal{N}\setminus \{vv\}$, clearly $|\mathcal{N}_{vw}|=|\mathcal{N}\setminus \{vv\}|+1$ 
and therefore $\mathcal{N}_{vw}$ is a $2$-matching of $T^\ell$ of size $j$. 
On the other hand, since $\ell(\mathcal{N}_{vw})=\ell(\mathcal{N})\setminus\{vv\}$, by Lemma~\ref{Lemmarel} we get that $d(\mathcal{N}\setminus \{vv\},X)=d(\mathcal{N}_{uv},X)$. 
Therefore 
\[
d(\mathcal{M},X)=x_w d(\mathcal{N},X)\ -\ \sum_{v\in U} d(\mathcal{N}_{vw},X).
\]
Since $\ell(\mathcal{N})\subsetneq \ell(\mathcal{M})$ and $\ell(\mathcal{N})_{vw}\subsetneq \ell(\mathcal{M})$ for all $v\in U$,
we can repeat this process until we get an expression of $d(\mathcal{M},X)$ as an algebraic combination 
of the minors associated to some minimal $2$-matchings of $T^\ell$ of size $j$.
\end{proof}

The next example illustrates how Theorem~\ref{TeoRed} works.

\begin{Example}
Let $\mathcal{C}$ be the tree given in Figure~\ref{cat} and
\[
\mathcal{M}=\{v_1v_1,v_2v_2,v_3v_3,v_4v_4,v_5v_5,v_6v_6\}
\]
be a $2$-matching of $\mathcal{C}^\ell$ of size $6$. 
Since $\mathcal{M}_1=\{v_1v_1,v_2v_2,v_3v_3,v_4v_4,v_5v_5,v_6v_9\}$ is a $2$-matching 
of size $6$ with $\ell(\mathcal{M})=\ell(\mathcal{M}_1)\cup \{v_6v_6\}$, then $\mathcal{M}$ is non-minimal.
Thus $d(\mathcal{M},X)=x_6 d(\mathcal{M}_1,X)-d(\mathcal{M}_2,X)$, where $\mathcal{M}_2=\{v_1v_1,v_2v_2,v_3v_3,v_4v_4,v_5v_6,v_6v_9\}$.

In a similar way, since $\mathcal{M}_1$ and $\mathcal{M}_2$ are not minimal, then $d(\mathcal{M}_1,X)=x_5 d(\mathcal{M}_2,X)-d(\mathcal{M}_3,X)$ where $\mathcal{M}_3=\{v_1v_1,v_2v_5,v_3v_3,v_4v_4,v_5v_6,v_6v_9\}$ and $d(\mathcal{M}_2,X)=x_2 d(\mathcal{M}_3,X)-d(\mathcal{M}_4,X)-d(\mathcal{M}_5,X)-d(\mathcal{M}_6,X)$ with $\mathcal{M}_4=\{v_1v_2,v_3v_3,v_4v_4\} \cup\mathcal{P}$, $\mathcal{M}_5=\{v_1v_1,v_2v_3,v_4v_4\}\cup\mathcal{P}$, $\mathcal{M}_6=\{v_1v_1,v_2v_4,v_3v_3\}\cup\mathcal{P}$, and $\mathcal{P}=\{v_2v_5,v_5v_6,v_6v_9\}$.

Finally, since $\mathcal{M}_4,\mathcal{M}_5$ and $\mathcal{M}_6$ are minimal $2$-matchings and $d(\mathcal{M}_3,X)=x_1 d(\mathcal{M}_4,X)$, then
\begin{eqnarray*}
d(\mathcal{M},X)=(x_1\cdot p_{2,5,6}\!-\!p_{5,6})\cdot d(\mathcal{M}_4,X)\!-\!p_{5,6}\cdot d(\mathcal{M}_5,X)\!-\!p_{5,6}\cdot d(\mathcal{M}_6,X),
\end{eqnarray*}
where $p_{2,5,6}=x_2x_5x_6\!-\!x_2\!-\!x_6$ and $p_{5,6}=x_5x_6\!-\!1$.
In a similar way, we can get  that
\begin{eqnarray*}
d(\mathcal{M},X)&\!\!\!\!=\!\!\!\!&(x_4\cdot p_{2,5,6}\!-\!p_{5,6})\cdot d(\mathcal{M}_6,X)\!-\!p_{5,6}\cdot d(\mathcal{M}_4,X)\!-\!p_{5,6}\cdot d(\mathcal{M}_5,X),\\
&\!\!\!\!=\!\!\!\!&(x_3\cdot p_{2,5,6}\!-\!p_{5,6})\cdot d(\mathcal{M}_5,X)\!-\!p_{5,6}\cdot d(\mathcal{M}_4,X)\!-\!p_{5,6}\cdot d(\mathcal{M}_6,X),
\end{eqnarray*}
which gives us an expression of $d(\mathcal{M},X)$ in terms of minors associated to some minimal $2$-matchings of $\mathcal{C}^\ell$ of size $6$.
\end{Example}

The next result is a fundamental identity in the study of the critical ideals of trees,
which proves that the first $\nu_2(T)$ critical ideals of a tree are trivial.

\begin{Theorem}\label{TeoTreeGamma}
If $T$ is a tree, then $\gamma(T)=\nu_2(T)$.
\end{Theorem}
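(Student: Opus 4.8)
The claim is that $\gamma(T) = \nu_2(T)$, i.e., that $I_j(T,X) = \mathbb{Z}[X_T]$ precisely for $j \le \nu_2(T)$, and $I_{\nu_2(T)+1}(T,X) \neq \mathbb{Z}[X_T]$. I would split this into the two inequalities $\gamma(T) \ge \nu_2(T)$ and $\gamma(T) \le \nu_2(T)$. By Theorem~\ref{TeoRed}, $I_j(T,X)$ is generated by the polynomials $d(\mathcal{M},X)$ as $\mathcal{M}$ ranges over $\mathcal{V}_2^*(T^\ell,j)$, so everything reduces to understanding which of these generators are units (i.e.\ equal to $\pm 1$) and, more importantly, to exhibiting a single generator that is a unit whenever $j \le \nu_2(T)$.

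\textbf{Lower bound $\gamma(T) \ge \nu_2(T)$.} For each $j \le \nu_2(T)$ I want to produce a $2$-matching $\mathcal{M} \in \mathcal{V}_2(T,j)$ (so $\ell(\mathcal{M}) = \emptyset$) with $d(\mathcal{M},X) = \pm 1$. Since $j \le \nu_2(T)$, $T$ has a $2$-matching of size $j$ with \emph{no loops}, and by Lemma~\ref{Lemmarel}, $d(\mathcal{M},X) = x_{\ell(\mathcal{M})} + (\text{lower order}) = x_\emptyset + \cdots$, so its leading term is the empty product $1$. I must check that the \emph{entire} determinant collapses to $\pm 1$, i.e.\ that there are no surviving lower-order terms. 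This follows from the structure already exhibited in the proof of Lemma~\ref{Lemmarel}: when $\mathcal{M}$ is a disjoint union of genuine paths (no loops), $L(T,X)[h(\mathcal{M}),t(\mathcal{M})]$ is equivalent to an upper triangular matrix with $1$'s on the diagonal, hence has determinant $\pm 1$. So any loopless $2$-matching of size $j$ gives a unit generator, forcing $I_j(T,X) = \mathbb{Z}[X_T]$ for all $j \le \nu_2(T)$; that is, $\gamma(T) \ge \nu_2(T)$.

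\textbf{Upper bound $\gamma(T) \le \nu_2(T)$.} Equivalently I must show $I_{\nu_2(T)+1}(T,X) \neq \mathbb{Z}[X_T]$. Set $j = \nu_2(T)+1$. By Theorem~\ref{TeoRed}, $I_j(T,X)$ is generated by $\{d(\mathcal{M},X) : \mathcal{M} \in \mathcal{V}_2^*(T^\ell,j)\}$, and every such $\mathcal{M}$ has $|\mathcal{M}| = j > \nu_2(T)$, so $\mathcal{M}$ \emph{must} contain at least one loop: $\ell(\mathcal{M}) \neq \emptyset$. By Lemma~\ref{Lemmarel}, $d(\mathcal{M},X) = x_{\ell(\mathcal{M})} + (\text{terms of lower degree})$, a polynomial with zero constant term. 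Therefore every generator of $I_j(T,X)$ lies in the ideal $\langle X_T \rangle = \langle x_v : v \in V(T)\rangle$, so $I_j(T,X) \subseteq \langle X_T \rangle \subsetneq \mathbb{Z}[X_T]$; in particular $I_j(T,X) \neq \mathbb{Z}[X_T]$, giving $\gamma(T) < j = \nu_2(T)+1$, i.e.\ $\gamma(T) \le \nu_2(T)$. Combining the two bounds yields $\gamma(T) = \nu_2(T)$.

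\textbf{Main obstacle.} The delicate point is making the lower bound argument airtight: one must be sure that a loopless $2$-matching really produces a determinant equal to $\pm 1$ and not merely a polynomial with leading term $1$ that could have a nonconstant remainder. This is exactly what the triangularization in the proof of Lemma~\ref{Lemmarel} delivers — for a disjoint union of paths the submatrix $L(T,X)[h(\mathcal{M}),t(\mathcal{M})]$ is (after reordering rows/columns along each path) unitriangular, so its determinant is a unit — but it is worth invoking that structure explicitly rather than just citing the leading-term statement. The upper bound, by contrast, is essentially immediate from Theorem~\ref{TeoRed} and Lemma~\ref{Lemmarel} once one observes that oversized $2$-matchings are forced to carry loops.
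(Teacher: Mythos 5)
Your lower bound is fine and is exactly what the paper does: a loopless $2$-matching $\mathcal{M}$ of size $j\leq\nu_2(T)$ gives a unitriangular submatrix, so $d(\mathcal{M},X)=\pm1\in I_j(T,X)$ and these ideals are trivial. The upper bound, however, has a genuine gap. You claim that every generator $d(\mathcal{M},X)$ with $\ell(\mathcal{M})\neq\emptyset$ has zero constant term and hence lies in $\langle x_v : v\in V(T)\rangle$. Lemma~\ref{Lemmarel} does not give this: it only controls the leading term, and its proof shows $d(\mathcal{M},X)=\pm\det L(T[\ell(\mathcal{M})],X)$, whose expansion $\sum_{\mu}(-1)^{|\mu|}\prod_{v\notin V(\mu)}x_v$ over matchings $\mu$ of $T[\ell(\mathcal{M})]$ contains a constant term $\pm1$ whenever $T[\ell(\mathcal{M})]$ has a perfect matching. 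Concretely, for $T=P_2$ one has $\nu_2=1$, the unique (hence minimal) $2$-matching of size $2$ in $P_2^\ell$ is the two loops, and $d(\mathcal{M},X)=x_1x_2-1\notin\langle x_1,x_2\rangle$; similarly for $P_n$ the only size-$n$ $2$-matching of $P_n^\ell$ is the all-loops one, whose minor is $\det L(P_n,X)$, which has constant term $\pm1$ when $n$ is even. So the containment $I_{\nu_2(T)+1}(T,X)\subseteq\langle X_T\rangle$ is simply false in general, and your argument does not establish that $I_{\nu_2(T)+1}(T,X)$ is proper (a priori a $\mathbb{Z}[X_T]$-combination of such generators could equal $1$).

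The paper closes this gap quite differently: it proves non-triviality of $I_{\nu_2(T)+1}(T,X)$ by induction on $|V(T)|$, using the deletion containment $I_k(T,X)\subseteq\langle x_v,\ I_{k-2}(T\setminus v,X)\rangle$ (from \cite[Claim 3.12]{Corrales}) for every vertex $v$. If $I_k(T,X)$ were trivial with $k=\nu_2(T)+1$, this forces $I_{k-2}(T\setminus v,X)$ to be trivial, hence by the induction hypothesis $\nu_2(T)-1\leq\nu_2(T\setminus v)$ for every $v$; but a tree has a saturated vertex (Corollary~\ref{v+2}), for which $\nu_2(T\setminus v)=\nu_2(T)-2$ by Lemma~\ref{T-v}, a contradiction. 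To repair your proof you would need an argument of this kind (or some other witness that $1\notin I_{\nu_2(T)+1}$); the single-ideal membership test via $\langle X_T\rangle$ cannot work because of the constant terms exhibited above.
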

\begin{proof}
Let $\mathcal{M}$ be a maximum $2$-matching of $T$. By \ref{Lemmarel}, $d(\mathcal{M},X)=1$ and since $d(\mathcal{M},X)\in I_{\nu_2(T)}(T, X)$, then $I_{\nu_2(T)}(T, X)$ is trivial. Thus, we only need to prove that $I_{\nu_2(T)+1}(T)$ is non-trivial.
We will use induction on the number of vertices of the tree.
It is not difficult to check that the result is true for all the trees whose number of vertices is less than or equal to four,
therefore we can assume that $|V(T)|\geq 5$.
Let $k=\nu_2(T)+1$ and $v\in V(T)$.
By~\cite[Claim 3.12]{Corrales},
\[
I_k(T,X)\subseteq \left\langle x_v I_{k-1}(T\setminus v,X), I_{k-2}(T\setminus v,X),I_k(T\setminus v,X)\right\rangle.
\]
Moreover, since $I_{k}(T\setminus v,X)\subseteq I_{k-1}(T\setminus v,X)\subseteq I_{k-2}(T\setminus v,X)$, then
$I_k(T,X)\subseteq \left\langle x_v, I_{k-2}(T\setminus v,X)\right\rangle$.
 By the induction hypothesis, $\gamma(T\setminus v)=\nu_2(T\setminus v)$ for all $v\in V(T)$.
If we assume that $I_k(T,X)$ is trivial, then $I_{k-2}(T\setminus v,X)$ is trivial and therefore
\[
\nu_2(T)-1=k-2\leq\gamma(T\setminus v)=\nu_2(T\setminus v) \text{ for all } v\in V(T),
\]
which is a contradiction to Lemma~\ref{T-e}. 
\end{proof}

As a consequence, we get that $P_n$ is the only simple graph with $n$ vertices and  $\gamma(G)=n-1$.

\begin{Corollary}\label{Gamman1}
If $G$ is a simple graph with $n$ vertices, then $\gamma(G)=n-1$ if and only if $G=P_n$.
\end{Corollary}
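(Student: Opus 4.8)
The plan is to use the characterization $\gamma(T)=\nu_2(T)$ from Theorem~\ref{TeoTreeGamma} for trees, together with the upper bound $\gamma(G)\leq 2(n-\alpha(G))$ from \cite[Theorem 3.13]{Corrales}, and reduce the general case to the tree case. First I would prove the ``if'' direction: for $G=P_n$, one checks directly that $\nu_2(P_n)=n-1$ (the whole edge set of the path is a $2$-matching, since every vertex has degree at most $2$), so Theorem~\ref{TeoTreeGamma} gives $\gamma(P_n)=\nu_2(P_n)=n-1$. This part is routine.

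For the ``only if'' direction, suppose $G$ is a simple graph on $n$ vertices with $\gamma(G)=n-1$. The key observation is that adding edges to a graph can only increase critical ideals' generators in a controlled way; more precisely I would use monotonicity of $\gamma$ under taking subgraphs: if $H$ is a spanning subgraph of $G$ on the same vertex set, then $\gamma(H)\leq\gamma(G)$ is \emph{not} what we want — rather, we want that deleting edges cannot decrease $\gamma$ below what a spanning forest forces. The cleaner route: any connected simple graph $G$ on $n$ vertices contains a spanning tree $T$, and $L(G,X)$ differs from $L(T,X)$ only by adding $\pm 1$'s in off-diagonal positions; since critical ideals of $T$ already contain all the $j$-minors that survive, one argues $\gamma(G)\leq\gamma(T)$ would be false in general, so instead I would invoke the bound $\gamma(G)\le 2(n-\alpha(G))$ directly on $G$. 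If $\gamma(G)=n-1$ then $n-1\le 2(n-\alpha(G))$, giving $\alpha(G)\le (n+1)/2$. Meanwhile, for $P_n$ we have $\alpha(P_n)=\lceil n/2\rceil$, which meets this bound with equality, so the bound alone does not finish the job — it only restricts $\alpha(G)$.

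So the real argument must go through trees. I would show: if $G$ has $\gamma(G)=n-1$, then $G$ must be a tree, and then apply Theorem~\ref{TeoTreeGamma} to conclude $\nu_2(G)=n-1$, and finally show $\nu_2(T)=n-1$ forces $T=P_n$. For the first step, note $\gamma(G)\le n-1$ always (the $n$-minor $\det L(G,X)$ is a monic polynomial in the $x_v$'s of degree $n$, hence nonconstant, so $I_n(G,X)\ne\mathbb{Z}[X_G]$); if $G$ has a cycle, then $G$ contains extra edges, and I would argue (using \cite[Claim 3.12]{Corrales} as in the proof of Theorem~\ref{TeoTreeGamma}, or directly) that $\gamma(G)\le n-2$: intuitively an $(n-1)$-minor of $L(G,X)$ of $G$ with a cycle always has a constant (degree-zero in the diagonal variables) term controlled by the cycle structure, but can never be a unit. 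Alternatively, and more robustly, I would use Theorem~\ref{TeoTreeGamma} on a spanning tree $T$ of $G$ together with the fact that $\nu_2$ and $\gamma$ behave monotonically: $\nu_2(T)\le n-1$ with equality iff $T=P_n$, and if $G$ is not a path then no spanning tree is a path either, forcing $\nu_2(T)\le n-2$ and hence (via the edge-deletion Lemma~\ref{T-e} applied repeatedly, or the combinatorial bound) $\gamma(G)\le n-2$.

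For the last step, $\nu_2(T)=n-1$ iff $E(T)$ itself is a $2$-matching iff every vertex of $T$ has degree at most $2$; a tree with maximum degree at most $2$ is exactly $P_n$. The main obstacle I anticipate is the monotonicity claim ``$G$ not a tree $\Rightarrow\gamma(G)\le n-2$'': one needs that critical ideals do not jump up when edges are added, or equivalently a clean argument that $I_{n-1}(G,X)$ is nontrivial whenever $G$ has a cycle. The safest way to handle this is probably to combine the stability-number bound $\gamma(G)\leq 2(n-\alpha(G))$ with the $2$-matching bound for a spanning tree and a short case analysis, invoking Theorem~\ref{TeoTreeGamma} only for trees; I would spell out that if $G$ is not $P_n$ then either $G$ is a tree other than $P_n$ (done by Theorem~\ref{TeoTreeGamma} and $\nu_2<n-1$) or $G$ properly contains a tree, in which case a direct minor computation on the added chord shows $\gamma$ drops.
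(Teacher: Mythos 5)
Your ``if'' direction and your final tree step are fine and agree with the paper: for a tree, a $2$-matching of size $n-1$ must be all of $E(T)$, so every vertex has degree at most $2$ and $T=P_n$, and Theorem~\ref{TeoTreeGamma} converts this into the statement about $\gamma$. The problem is the reduction of the general case to the tree case, which is the actual content of the corollary, and there your proposal has a genuine gap. The paper's key move, which you never find, is to specialize: $\gamma(G)=n-1$ means $I_{n-1}(G,X)=\langle 1\rangle$, so by \cite[Proposition 3.7]{Corrales} the critical group $K(G)$ is trivial, and by Kirchhoff's matrix--tree theorem a graph with trivial critical group has exactly one spanning tree, hence \emph{is} a tree; only then is Theorem~\ref{TeoTreeGamma} applied. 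None of your proposed substitutes for this step is sound. The claim ``if $G$ is not a path then no spanning tree is a path either'' is simply false: the cycle $C_n$ is not a path, yet every spanning tree of $C_n$ is $P_n$ (and indeed $\nu_2$ of that spanning tree is $n-1$), so your ``more robust'' route collapses exactly on the family of graphs you most need to exclude. The monotonicity $\gamma(G)\leq\gamma(T)$ for a spanning tree $T$ that this route also needs is never proved and is not obvious (critical ideals do not behave monotonically under edge addition in any simple way). Finally, the assertions that an $(n-1)$-minor of a graph with a cycle ``can never be a unit'' and that ``a direct minor computation on the added chord shows $\gamma$ drops'' are statements of the desired conclusion, not arguments; the stability bound $\gamma(G)\leq 2(n-\alpha(G))$, as you yourself note, is too weak to close the gap. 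So as written the ``only if'' direction is not proved; the missing ingredient is precisely the passage through the critical group and the matrix--tree theorem (or some other genuine proof that $I_{n-1}(G,X)$ is nontrivial whenever $G$ is not a tree).
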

\begin{proof}
$(\Rightarrow)$ If $G=P_n$, by Theorem \ref{TeoTreeGamma} $\gamma(G)=\nu_2(G)=\nu_2(P_n)=n-1$.
$(\Leftarrow)$ Let $G$ be a graph with $n$ vertices and $\gamma(G)=n-1$. 
Since $I_{n-1}(G,X)=\langle 1 \rangle$, by~\cite[Proposition 3.7]{Corrales} the critical group of $G$ must be trivial. 
Then by Kirchhoff's Matrix Tree Theorem~\cite[Theorem 6.2]{Biggs99}, $G$ is a tree.
By Theorem~\ref{TeoTreeGamma}, we get that $\nu_2(G)=n-1$. Thus, there exists a $2$-matching $\mathcal{M}$ of $T$ with size $n-1$. Let $P_{n_1},\ldots,P_{n_s}$ be paths on $G$ such that $\mathcal{M}=E(P_{n_1})\cup\cdots\cup E(P_{n_s})$. Since $G$ has $n$ vertices,
\[
n\geq |V(\mathcal{M})|=|V(P_{n_1})|+\cdots+|V(P_{n_s})|=|E(P_{n_1})|+\cdots+|E(P_{n_s})|+s=|\mathcal{M}|+s=n-1+s.
\]
Hence $s=1$ and so $n=|\mathcal{M}|+1=|E(P_{n_1})|+1=n_1$. Thus, since $G$ contains a path with $n$ vertices, $G=P_n$.
\end{proof}


\section{Gr\"obner basis of critical ideals}\label{grobner}

Usually the theory of Gr\"obner bases deals with ideals in a polynomial ring over a field.
However, in this section we deal with ideals in a polynomial ring over the integers.
There exists a theory of Gr\"obner basis over almost any kind of ring.
 
We recall some basic concepts of Gr\"obner bases. For more details, see~\cite{Adams}. 
First, let $\mathcal{P}$ be a principal ideal domain. A {\it monomial order} or {\it order term} in the polynomial ring $R=\mathcal{P}[x_1,\ldots,x_n]$ is a total order $\prec$ in the set of monomials of $R$ such that
\begin{description}
\item[(i)] $1\prec x^{\alpha}$  for all ${\bf 0} \neq {\bf \alpha} \in\mathbb{N}^n$, and  
\item[(ii)] if $x^{\alpha}\prec x^{\beta}$, then 
$x^{\alpha+\gamma}\prec x^{\beta+\gamma}$
for all $\gamma \in\mathbb{N}^n$,
\end{description}
where $x^{\alpha}=x_1^{\alpha_1} \cdots x_n^{\alpha_n}$.

Now, given a monomial order $\prec$ and $p\in R$, let ${\rm lt}(p)$, ${\rm lp}(p)$, and ${\rm lc}(p)$ 
be the {\it leading term}, the {\it leading power}, and the {\it leading coefficient} of $p$, respectively.
Given a subset $S$ of $R$, its leading term ideal is
\[
{\rm Lt}(S)=\langle {\rm lt}(s) \,|\, s\in S \rangle.
\]

A finite set of nonzero polynomials $B=\{b_1,\ldots, b_s\}$ of an ideal $I$ is called a {\it Gr\"obner basis} of $I$ 
with respect to an order term $\prec$ if ${\rm Lt}(B)={\rm Lt}(I)$.
Moreover, it is called {\it reduced} if ${\rm lc}(b_i)=1$ for all $1\leq i\leq s$ and no nonzero term in $b_i$
is divisible by any ${\rm lp}(b_j)$ for all $1\leq i\neq j\leq s$.

A good characterization of Gr\"obner bases is given in terms of the so called $S$-polynomials.
\begin{Definition}
Let $f$ and $f'$ be polynomials in $\mathcal{P}[X]$ and $B$ a set of polynomials in $\mathcal{P}[X]$. 
We say that $f$ {\it reduces strongly} to $f'$ modulo $B$ if
\begin{itemize}
\item ${\rm lt}(f')\prec {\rm lt}(f)$, and 
\item there exist $b\in B$ and $h\in \mathcal{P}[X]$ such that $f'=f-hb$. 
\end{itemize}
Moreover, if $f^*\in \mathcal{P}[X]$ can be obtained from $f$ in a finite number of reductions, we write $f\rightarrow_B f^*$.
\end{Definition}

That is, if $f=\sum_{j=1}^t p_{i_j}b_{i_j}+f^*$ with $p_{i_j} \in \mathcal{P}[X]$ and 
${\rm lt}(p_{i_j}b_{i_j})\neq {\rm lt}(p_{i_k}b_{i_k})$ for all $j\neq k$, then $f\rightarrow_B f^*$.

Now, given $f$ and $g$ polynomials in $\mathcal{P}[X]$, their {\it $S$-polynomial}, denoted by $S(f,g)$, is given by
\[
S(f,g)=\frac{c}{c_f}\frac{X}{X_f}\,f-\frac{c}{c_g}\frac{X}{X_g}\,g,
\]
where $X_f=lt(f)$, $c_f=lc(f)$, $X_g=lt(g)$, $c_g=lc(g)$, $X=\textrm{lcm}(X_f,X_g)$, and $c=\textrm{lcm}(c_f,c_g)$.

\medskip

The next lemma, known as Buchberger's criterion, gives us a useful criterion for checking whether a set of generators of an ideal is a  Gr\"obner basis.
\begin{Lemma}\label{Buch}
Let $I$ be an ideal of polynomials over a PID and $B$ be a generating set of $I$. 
Then $B$ is a Gr\"obner basis for $I$ if and only if $S(f,g)\rightarrow_B 0$  for all $f\neq g\in B$.
\end{Lemma}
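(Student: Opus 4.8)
The plan is to run the classical proof of Buchberger's criterion, carried out over the principal ideal domain $\mathcal{P}$ exactly as in \cite{Adams}; the only place where working over a ring instead of a field really matters is a syzygy computation on leading coefficients, and that is the step I expect to be delicate.

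\emph{The direction ``$\Rightarrow$''.} Assume $B=\{b_1,\dots,b_s\}$ is a Gr\"obner basis of $I$, i.e.\ $\mathrm{Lt}(B)=\mathrm{Lt}(I)$. For $f\neq g$ in $B$ we have $S(f,g)\in I$, so the implication reduces to the standard fact (proved by the usual well-ordering argument, see \cite{Adams}) that once $B$ is a Gr\"obner basis every element of $I$ satisfies $h\rightarrow_B 0$: indeed, if $0\neq h\in I$ then $\mathrm{lt}(h)\in\mathrm{Lt}(I)=\mathrm{Lt}(B)$, which lets one strip off the leading term of $h$ by a suitable $\mathcal{P}[X]$-combination of the $b_i$ (here the PID hypothesis is used to match the leading coefficient), strictly decreasing the leading power product; since $\prec$ is a well order this terminates at $0$. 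Taking $h=S(f,g)$ gives $S(f,g)\rightarrow_B 0$.

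\emph{The direction ``$\Leftarrow$''.} Assume $S(f,g)\rightarrow_B 0$ for all $f\neq g\in B$. Since $B\subseteq I$ gives $\mathrm{Lt}(B)\subseteq\mathrm{Lt}(I)$ for free, it suffices to show $\mathrm{lt}(p)\in\mathrm{Lt}(B)$ for every $0\neq p\in I$. Write $p=\sum_{i=1}^s h_i b_i$ with $h_i\in\mathcal{P}[X]$ and, among all such representations, choose one for which
\[
X_0:=\max\{\mathrm{lp}(h_ib_i)\mid h_i\neq 0\}
\]
is $\prec$-minimal (legitimate because $\prec$ is a well order); necessarily $X_0\succeq\mathrm{lp}(p)$, and the aim is to force equality. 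Suppose $X_0\succ\mathrm{lp}(p)$ and set $T=\{i\mid\mathrm{lp}(h_ib_i)=X_0\}$. Comparing the coefficient of the power product $X_0$ on the two sides of $p=\sum h_i b_i$, which is $0$ on the left since $X_0\succ\mathrm{lp}(p)$, yields $\sum_{i\in T}\mathrm{lc}(h_i)\mathrm{lc}(b_i)=0$. Now comes the key step, a rewriting lemma in the spirit of \cite{Adams}: the tuple $(\mathrm{lc}(h_i))_{i\in T}$ is a syzygy of $(\mathrm{lc}(b_i))_{i\in T}$, and over a PID the syzygy module of these coefficients is generated by the pairwise ``$\mathrm{lcm}$-syzygies'' — which is precisely why $S(f,g)$ is defined with $c=\mathrm{lcm}(c_f,c_g)$. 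Combining this with the usual telescoping of the power-product parts familiar from the field case, one obtains
\[
\sum_{i\in T}\mathrm{lt}(h_i)\,b_i=\sum_{i<j,\ i,j\in T} g_{ij}\,S(b_i,b_j)
\]
with $g_{ij}\in\mathcal{P}[X]$ and $\mathrm{lp}(g_{ij}S(b_i,b_j))\prec X_0$ for every pair. Replacing each $S(b_i,b_j)$ by a reduction expression $\sum_k q_{ijk}b_k$ with all $\mathrm{lp}(q_{ijk}b_k)\preceq\mathrm{lp}(S(b_i,b_j))\prec X_0$, and retaining the manifestly small remaining parts $\sum_{i\in T}(h_i-\mathrm{lt}(h_i))b_i+\sum_{i\notin T}h_ib_i$, produces a new representation $p=\sum_i h_i' b_i$ all of whose summands satisfy $\mathrm{lp}(h_i'b_i)\prec X_0$, contradicting the minimality of $X_0$. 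Hence $X_0=\mathrm{lp}(p)$, and then $\mathrm{lt}(p)=\big(\sum_{i\in T}\mathrm{lc}(h_i)\mathrm{lc}(b_i)\big)\,\mathrm{lp}(p)\in\langle\mathrm{lt}(b_i)\mid i\in T\rangle\subseteq\mathrm{Lt}(B)$, as required.

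The step I expect to be the main obstacle is the rewriting lemma above. Over a field the cancelling top terms of the maximal summands telescope at once into $S$-polynomials; over a PID one must first describe the syzygy module of $(\mathrm{lc}(b_i))_{i\in T}$ over $\mathcal{P}$ — the one point where ``$\mathcal{P}$ is a PID'' is genuinely needed, matching the $\mathrm{lcm}$ of leading coefficients built into the $S$-polynomial — while carrying the power products along simultaneously. Everything else (well-orderedness of $\prec$, termination of reduction, the descent on $X_0$) is routine.
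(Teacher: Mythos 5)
The paper does not actually prove Lemma~\ref{Buch}: it is quoted as a known form of Buchberger's criterion over a PID with a pointer to \cite{Adams}, so there is no argument of the authors' to measure yours against; I can only judge your proof on its own terms. Your ``$\Leftarrow$'' direction --- the only implication the paper later uses, in the proof of Theorem~\ref{Groebner} --- is the standard descent on the maximal power product $X_0$ of a representation and is essentially sound. The one ingredient you assert without proof, that over a PID the syzygies of the nonzero constants $(\mathrm{lc}(b_i))_{i\in T}$ are generated by the pairwise $\mathrm{lcm}$-syzygies, is true (check it prime by prime in the UFD $\mathcal{P}$, or peel off one index at a time using that $\langle c_1,\dots,c_{s-1}\rangle$ is principal), and you correctly isolate it as the only point where the PID hypothesis enters; a complete write-up must include it.

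Your ``$\Rightarrow$'' direction, however, has a genuine gap. You reduce it to the claim that once $\mathrm{Lt}(B)=\mathrm{Lt}(I)$ every $h\in I$ satisfies $h\rightarrow_B 0$, on the grounds that $\mathrm{lt}(h)\in\mathrm{Lt}(B)$ lets one strip off the leading term. But the reduction relation defined in this section is \emph{strong} reduction: each step subtracts a multiple of a \emph{single} $b\in B$, so stripping $\mathrm{lt}(h)$ requires one $b_i$ with $\mathrm{lp}(b_i)\mid\mathrm{lp}(h)$ \emph{and} $\mathrm{lc}(b_i)\mid\mathrm{lc}(h)$, whereas $\mathrm{lt}(h)\in\mathrm{Lt}(B)$ only exhibits $\mathrm{lt}(h)$ as a $\mathcal{P}[X]$-combination of several $\mathrm{lt}(b_i)$. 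Over a field these coincide; over $\mathbb{Z}$ they do not. Concretely, $B=\{2x,3x\}\subset\mathbb{Z}[x]$ generates $I=\langle x\rangle$ and satisfies $\mathrm{Lt}(B)=\langle x\rangle=\mathrm{Lt}(I)$, yet $x\in I$ admits no strong reduction step at all, so your intermediate claim fails. (For this $B$ the lemma survives because $S(2x,3x)=0$; but $B=\{2x+y,\,3x+y,\,2y,\,3y\}\subset\mathbb{Z}[x,y]$ is likewise a Gr\"obner basis of $\langle x,y\rangle$ in the sense defined here, while $S(2x+y,3x+y)=y$ cannot be strongly reduced by any single element of $B$.) To make ``only if'' work you must either take ``Gr\"obner basis'' in the \emph{strong} sense --- every $\mathrm{lt}(f)$ with $f\in I$ divisible by a single $\mathrm{lt}(b)$ --- or allow reduction steps that combine several elements of $B$ (as the informal gloss following the definition of strong reduction suggests, but the itemized definition does not permit); as written, your argument, and indeed this implication of the statement, does not go through.
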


In this paper we only work with the so called {\it degree lexicographic order}.

\begin{Definition}
Let $x^\alpha$ and $x^\beta$ be two monomials on $\mathcal{P}[x_1,\ldots,x_n]$,  then $x^\alpha\prec x^\beta$ whenever
\begin{itemize}
\item $\alpha_1+\cdots+\alpha_n<\beta_1+\cdots+\beta_n$,
\item or $\alpha_1+\cdots+\alpha_n=\beta_1+\cdots+\beta_n$ and exist $i=1,\ldots,n$ such that
\[
\alpha_1=\beta_1,\ \alpha_2=\beta_2,\ldots,\ \alpha_{i-1}=\beta_{i-1}\ \textrm{and}\ \alpha_i<\beta_i.
\]
\end{itemize}
\end{Definition}

In this section we prove that if $T$ is a tree with $n$ vertices, then $\{d(\mathcal{M},X)|\mathcal{M}\in\mathcal{V}_2^*(T^\ell,n-1)\}$ 
is not only a generating set, but also a Gr\"obner basis for $I_{n-1}(T)$. 
First we prove that for any $1\leq j\leq n$, a strong reduction by $\mathcal{V}_2(T^\ell,j)$ is equivalent to a strong reduction by $\mathcal{V}_2^*(T^\ell,j)$.

\begin{Proposition}\label{reductiontomin}
Let $1\leq j\leq n$ and $X_T=\{x_1,\ldots,x_s\}$.
If $f(x),g(x)\in\mathbb{Z}[X_T]$ are such that $f(x)\rightarrow_{\mathcal{V}_2(T^\ell,j)} g(x)$, then $f(x)\rightarrow_{\mathcal{V}_2^*(T^\ell,j)} g(x)$.
\end{Proposition}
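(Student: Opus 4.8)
The plan is to reduce the statement to a single reduction step and then apply Theorem~\ref{TeoRed} together with the degree-preserving structure of the identities in Corollary~\ref{xminor}. By definition of $\rightarrow_{\mathcal{V}_2(T^\ell,j)}$, it suffices to treat the case of one strong reduction: suppose $g(x) = f(x) - h(x)\,d(\mathcal{M},X)$ with $\mathcal{M}\in\mathcal{V}_2(T^\ell,j)$ and ${\rm lt}(g)\prec{\rm lt}(f)$, and show $f(x)\rightarrow_{\mathcal{V}_2^*(T^\ell,j)} g(x)$. If $\mathcal{M}$ is already minimal there is nothing to do, so assume $\mathcal{M}$ is not minimal. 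The key input is that the proof of Theorem~\ref{TeoRed} writes $d(\mathcal{M},X)$ as an algebraic combination $\sum_i q_i(X)\, d(\mathcal{M}_i,X)$ with each $\mathcal{M}_i\in\mathcal{V}_2^*(T^\ell,j)$; since all the $\mathcal{M}_i$ have size $j$, Lemma~\ref{Lemmarel} tells us ${\rm lt}(d(\mathcal{M}_i,X)) = x_{\ell(\mathcal{M}_i)}$, and likewise ${\rm lt}(d(\mathcal{M},X)) = x_{\ell(\mathcal{M})}$. The central point I must verify is that in this expansion, ${\rm lt}(q_i(X)\,d(\mathcal{M}_i,X))$ are pairwise distinct and one of them equals ${\rm lt}(d(\mathcal{M},X))$ — this is what lets us rewrite the single reduction by $\mathcal{M}$ as a sequence of reductions by the minimal matchings $\mathcal{M}_i$ without producing leading-term cancellations that would violate the definition of $\rightarrow_B$.

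\textbf{Key steps.} First I would unwind the recursion in Theorem~\ref{TeoRed}: each step replaces $d(\mathcal{M},X)$ using the identity $d(\mathcal{M},X)=x_w\,d(\mathcal{N},X)-\sum_{v\in U} d(\mathcal{N}_{vw},X)$, where $\ell(\mathcal{M})=\ell(\mathcal{N})\cup\{ww\}$, so $\ell(\mathcal{N})=\ell(\mathcal{M})\setminus\{ww\}$ and $\ell(\mathcal{N}_{vw})\subsetneq\ell(\mathcal{M})$. Tracking leading terms via Lemma~\ref{Lemmarel}: ${\rm lt}(x_w\,d(\mathcal{N},X)) = x_w\cdot x_{\ell(\mathcal{N})} = x_{\ell(\mathcal{M})} = {\rm lt}(d(\mathcal{M},X))$, while each ${\rm lt}(d(\mathcal{N}_{vw},X)) = x_{\ell(\mathcal{N}_{vw})}$ has strictly smaller degree (one fewer loop). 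Second, I would iterate: when $\mathcal{N}$ or $\mathcal{N}_{vw}$ is still non-minimal, apply the identity again; the leading term of the full expansion is always carried by the ``main'' term $x_{\ell(\mathcal{M})}$, and every other term has a loop-set strictly contained in $\ell(\mathcal{M})$, hence strictly smaller degree or strictly smaller in the lexicographic tiebreak. Third, after collecting, I obtain $d(\mathcal{M},X) = \sum_i q_i(X)\,d(\mathcal{M}_i,X)$ with $\mathcal{M}_i\in\mathcal{V}_2^*(T^\ell,j)$, where exactly one summand has leading term $x_{\ell(\mathcal{M})}$ and the remaining summands have strictly smaller leading terms; by combining terms with equal leading power if necessary and discarding any that vanish, one arranges ${\rm lt}(q_i\,d(\mathcal{M}_i))$ to be pairwise distinct — this is precisely the form $\sum q_i\,d(\mathcal{M}_i)$ appearing in the characterization of $f\rightarrow_B f^*$ given just after the definition of strong reduction. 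Substituting this into $g(x) = f(x) - h(x)\,d(\mathcal{M},X) = f(x) - \sum_i h(x)q_i(X)\,d(\mathcal{M}_i,X)$ and checking that ${\rm lt}(g)\prec{\rm lt}(f)$ still holds (it does, unchanged) exhibits $f(x)\rightarrow_{\mathcal{V}_2^*(T^\ell,j)} g(x)$. Finally, a general reduction $f\rightarrow_{\mathcal{V}_2(T^\ell,j)} g$ is a finite chain of such single steps, and composing the resulting chains of $\mathcal{V}_2^*$-reductions gives the claim.

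\textbf{Main obstacle.} The delicate part is the bookkeeping of leading terms through the iterated application of Corollary~\ref{xminor}: I must ensure that across \emph{all} the terms produced — over possibly several rounds of the recursion — no two minimal-matching minors $d(\mathcal{M}_i,X)$ receive coefficients whose leading products collide in a way that cancels the leading term of the whole expression, since the definition of $\rightarrow_B$ requires ${\rm lt}(p_{i_j}b_{i_j})\neq{\rm lt}(p_{i_k}b_{i_k})$. The saving observation is the strict containment $\ell(\mathcal{N}), \ell(\mathcal{N}_{vw})\subsetneq\ell(\mathcal{M})$ at every step (already noted in the proof of Theorem~\ref{TeoRed}), which forces a strict drop in the loop-degree for every term except the single distinguished one carrying $x_{\ell(\mathcal{M})}$; with the degree-lexicographic order this prevents any collision with the top term, and the lower terms can be handled inductively on $|\ell(\mathcal{M})|$. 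One should also note that $d(\mathcal{M},X)$ and $d(\mathcal{N}_{vw},X)$ could in principle be literally the same generator as some $d(\mathcal{M}_i,X)$ when loop-sets coincide (Lemma~\ref{Lemmarel}), so a final pass merging such repeated generators and their coefficients is needed before declaring the expansion to be in the required reduced form.
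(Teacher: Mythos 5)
Your proof takes essentially the same route as the paper's: reduce to a single strong reduction step, expand the non-minimal generator $d(\mathcal{M},X)$ through the recursion of Theorem~\ref{TeoRed}, and use Lemma~\ref{Lemmarel} to see that exactly one summand carries the leading term $x_{\ell(\mathcal{M})}$ while all others are strictly smaller, so that the summands' leading terms are pairwise distinct and the paper's characterization of $\rightarrow_B$ applies. The paper packages this as an explicit chain $f\rightarrow f_1\rightarrow\cdots\rightarrow f_s=g$ with the summands ordered by leading term, and your additional bookkeeping (merging generators with equal loop sets, induction on $|\ell(\mathcal{M})|$) just makes explicit what the paper asserts by ``following the proof of Theorem~\ref{TeoRed}''.
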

\begin{proof}
Suppose that $d(\mathcal{M},X)\in\mathcal{V}_2(T^\ell,j)$ and $h(x)\in\mathbb{Z}[X_T]$ are such that $g(x)=f(x)-h(x)d(\mathcal{M},X)$ and $x_g\prec x_f$. 
If $\mathcal{M}$ is minimal, then there is nothing left to prove. 
On the other hand, if $\mathcal{M}$ is not minimal, then according to Theorem~\ref{TeoRed} there are 
$\mathcal{N}_1,\ldots,\mathcal{N}_s\in \mathcal{V}_2^*(T^\ell,j)$ and $t_1(x),\ldots,t_s(x)\in\mathbb{Z}[X_T]$ 
such that $d(\mathcal{M},X)=t_1(x)d(\mathcal{N}_1,X)+\cdots+t_s(x)d(\mathcal{N}_s,X)$. 
Thus
\[
g(x)=f(x)-\sum_{i=1}^{s} t_i(x)p(\mathcal{N}_i,X)h(x).
\]
Following the proof of Theorem~\ref{TeoRed}, we can ensure that for each $i=1,\ldots,s-1$
\[
\mathrm{lt}(t_i(x)p(\mathcal{N}_i,X))\prec\mathrm{lt}(t_{i+1}(x)p(\mathcal{N}_{i+1},X)).
\]
Thus, if 
\begin{eqnarray*}
f_1(x)&=&f(x)-t_1(x)p(\mathcal{N}_1,X)h(x),\\
f_2(x)&=&f_1(x)-t_2(x)p(\mathcal{N}_2,X)h(x),\\
&\vdots&\\ 
f_s(x)&=&f_{s-1}(x)-t_s(x)p(\mathcal{N}_s,X)h(x),
\end{eqnarray*}
then $x_{f_s}\prec \cdots \prec x_{f_1}\prec x_f$. 
Therefore $f(x)\rightarrow_{\mathcal{V}_2^*(T^\ell)} f_1(x) \rightarrow_{\mathcal{V}_2^*(T^\ell)} \cdots \rightarrow_{\mathcal{V}_2^*(T^\ell)} f_s(x)=g(x)$.
\end{proof}

Now, before proceeding to deal with the reduction of S-polynomials, we begin with the reduction of a monomial and a minor of size $n-1$. 
In what follow, if $e_1,e_2$ are two different edges in $T$, then $P(e_1,e_2)$ is the unique path in $T$ that joins $e_1$ and $e_2$.

\begin{Lemma}
\label{monDet}
If $T$ is a tree and $P$ is a non-empty path of $T$, then
\[
x_P\, d(T\setminus P,X)=d(T,X)+\sum_{e\in E(N_T(P))} d(T\setminus V(e),X)+\sum_{(e_1,e_2)\in\Lambda}\frac{x_{P(e_1,e_2)}}{x_{e_1}x_{e_2}}\,d(T\setminus P(e_1,e_2),X),
\]
where $\Lambda=\{(e_1,e_2)\in\mathcal{V}_1(N_T(P))|e_1,e_2\in E(N_T(P))\}.$
\end{Lemma}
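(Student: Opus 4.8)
The plan is to recognise the left--hand side as a single determinant $d(T\setminus S,X)$ for a suitable edge set $S$, apply Lemma~\ref{TS}, and then reorganise the resulting sum of matchings according to how far each matching ``reaches'' along $P$.

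First I would take $S$ to be the set of all edges of $T$ incident with at least one vertex of $P$. Deleting $S$ makes every vertex of $P$ isolated and leaves $T\setminus P$ untouched, so $T\setminus S$ is the disjoint union of $T\setminus P$ with $|V(P)|$ isolated vertices; since $L$ is block diagonal on connected components and an isolated vertex $v$ contributes the factor $x_v$, we get $x_P\, d(T\setminus P,X)=d(T\setminus S,X)$. Because $T$ is a tree, no vertex outside $V(P)$ is adjacent to two vertices of $P$, so the subgraph of $T$ induced by $S$ is exactly the caterpillar $N_T(P)$ with spine $P$ and pendant legs joining $V(P)$ to its neighbours; in particular $E(N_T(P))=S$. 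Lemma~\ref{TS} then gives
\[
x_P\, d(T\setminus P,X)=\sum_{\mu\in\mathcal{V}_1(N_T(P))}d(T\setminus V(\mu),X).
\]

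Next I would sort the matchings $\mu$ by size. The empty matching yields $d(T,X)$, and the matchings of size $1$ yield $\sum_{e\in E(N_T(P))}d(T\setminus V(e),X)$; these are the first two terms. For $|\mu|\ge 2$, note that every edge of the caterpillar meets the spine $P=v_1\cdots v_k$ in one or two consecutive vertices; let $e_1$ (resp.\ $e_2$) be the edge of $\mu$ meeting the spine vertex of smallest (resp.\ largest) index among those met by $\mu$. Using that $\mu$ is a matching one checks that $e_1\neq e_2$, that $\{e_1,e_2\}\in\Lambda$, and that every other edge of $\mu$ lies in the ``middle'' caterpillar $M(e_1,e_2)$, namely the sub-caterpillar of $N_T(P)$ spanned by the spine vertices strictly between $e_1$ and $e_2$ together with their legs; conversely $\mu\mapsto(\{e_1,e_2\},\,\mu\setminus\{e_1,e_2\})$ is a bijection between the matchings of size $\ge 2$ with extreme pair $(e_1,e_2)$ and the matchings of $M(e_1,e_2)$. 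Grouping by $(e_1,e_2)$ and applying Lemma~\ref{TS} a second time, to $T\setminus(V(e_1)\cup V(e_2))$ with the edge set $E(M(e_1,e_2))$, collapses the corresponding inner sum:
\[
\sum_{\substack{\mu:\,|\mu|\ge 2\\ \mathrm{extremes}\,(e_1,e_2)}}d(T\setminus V(\mu),X)=\Big(\prod_{v\in V(P(e_1,e_2))\setminus(V(e_1)\cup V(e_2))}x_v\Big)\,d\big(T\setminus V(P(e_1,e_2)),X\big).
\]
Since $V(P(e_1,e_2))$ is the disjoint union of $V(e_1)$, $V(e_2)$ and the middle spine vertices, the monomial in front equals $x_{P(e_1,e_2)}/(x_{e_1}x_{e_2})$ (with the convention $x_{uv}=x_ux_v$), and summing over $(e_1,e_2)\in\Lambda$ produces the third term.

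The main obstacle is the combinatorial bookkeeping of the last step: one must verify that ``leftmost/rightmost edge of $\mu$'' is well defined (a matching uses each vertex at most once, so each extreme spine vertex is met by a unique edge, and a single spine edge cannot be simultaneously leftmost and rightmost for a matching of size $\ge 2$), check the displayed bijection including the degenerate case where $M(e_1,e_2)$ has no vertices (then the only contribution is $\mu=\{e_1,e_2\}$ and the coefficient is the empty product $1$), and carefully track which spine vertices survive after deleting $V(e_1)\cup V(e_2)$. A minor point is that $T\setminus(V(e_1)\cup V(e_2))$ is in general a forest, not a tree; one applies Lemma~\ref{TS} componentwise, noting that both that lemma and the multiplicativity of $d$ over connected components hold verbatim for forests. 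An alternative route is induction on $|V(P)|$, peeling off an end vertex and invoking Corollary~\ref{xminor}, but the interaction of the peeled path with the remaining sub-path makes that bookkeeping at least as delicate.
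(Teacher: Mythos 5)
Your argument is correct and follows essentially the same route as the paper: write $x_P\,d(T\setminus P,X)$ as $d(T\setminus S,X)$ for $S=E(N_T(P))$, expand via Lemma~\ref{TS}, split off the matchings of size $\le 1$, group the larger matchings by their extreme pair $(e_1,e_2)\in\Lambda$, and collapse each inner sum by a second application of Lemma~\ref{TS} to $T\setminus V(e_1\cup e_2)$ with the middle edge set. Your extra bookkeeping (uniqueness of the extreme pair, the degenerate empty-middle case, and applying Lemma~\ref{TS} componentwise on a forest) only makes explicit points the paper leaves implicit.
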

\begin{proof}
Let $S=E(N_T(P))$. 
Clearly $V(P)$ is a free set of $T\setminus S$.
Thus, by Lemma~\ref{TS}
\[
x_P\, d(T\setminus P,X)=\!\!\!\!\!\!\sum_{\mu\in\mathcal{V}_1(S),|\mu|\leq 1}\!\!\!\!\!\! d(T\setminus V(\mu),X)\,\,\,\,\,+
\!\!\!\!\!\!\sum_{\mu\in\mathcal{V}_1(S),|\mu|\geq 2}\!\!\!\!\!\! d(T\setminus V(\mu),X).
\]
Each $\mu\in\mathcal{V}_1(S)$ with $|\mu|=2$ is a member of $\Lambda$. 
If $E_\mu$ is the neighborhood of $V(P(\mu))/V(\mu)$ in $T[V(P(\mu))/V(\mu)]$, then 
$\big\{\mu\in\mathcal{V}_1(N_T(P))\big| |\mu|\geq 2\big\}=\bigcup_{\mu\in\Lambda} \big\{\mu\cup\rho\big|\rho\in\mathcal{V}_1(E_\mu)\big\}$.
This relation allows us to write
\begin{eqnarray*}
\sum_{\mu\in\mathcal{V}_1(S),|\mu|\geq 2} d(T\setminus V(\mu),X)&=&
\sum_{\mu\in\Lambda}\ \sum_{\rho\in\mathcal{V}_1(E_\mu)} d(T\setminus V(\mu\cup\rho),X).
\end{eqnarray*}
For each $\mu\in\Lambda$, we apply Lemma~\ref{TS} to $T\setminus V(\mu)$ and $E_\mu$ to get that
\[
\sum_{\rho\in\mathcal{V}_1(E_\mu)} d(T\setminus V(\mu\cup\rho),X)=\frac{x_{P(\mu)}}{x_\mu}\ d(T\setminus P(\mu),X).
\]
\end{proof}

\begin{Remark}
Note that a $2$-matching $\mathcal{M}$ has size $n-1$ if and only if $T\setminus \ell(\mathcal{M})$ is a path (possibly of size zero). 
Thus, $T[\mathcal{M}]=T\setminus P$ for some path $P$ and $d(\mathcal{M},X)=d(T\setminus P,X)$. 
Conversely, for each path $P$, $T\setminus P=T[\mathcal{M}]$ for some $\mathcal{M}\in \mathcal{V}_2(T^\ell,n-1)$.
\end{Remark}

Now, we deal with the other case, of the product of a monomial and a minor of size $n-1$. 
Suppose that $P$ and $Q$ are non-empty paths of $T$ with $Q\subset P$. 
Then $P\setminus Q$ is composed of one or two paths, which we call $P_l$ and $P_r$ ($P_r$ could be empty). 
Let $L=E(N_{T\setminus Q}(P_l))$ and $R=E(N_{T\setminus Q}(P_r))$.

\begin{Proposition}\label{monsubDet}
Let $P$ be a path in a tree $T$ and $Q$ a non-empty subpath of $P$. 
If $L$ and $R$ are defined as above, then 
\[
\frac{x_P}{x_Q}\, d(T\setminus P,X)=d(T\setminus Q,X)+
\sum_{e\in L} \frac{x\,_{P(e,Q)}}{x_ex_Q}d(T\setminus P(e,Q),X)
\]
\[
+\sum_{e\in R} \frac{x\,_{P(Q,e)}}{x_Qx_e}d(T\setminus P(Q,e),X)+\sum_{e_l\in L}\sum_{e_r\in R}\frac{x_{P(e_l,e_r)}}{x_{e_l}x_Qx_{e_r}}\, d(T\setminus P(e_r,Q,e_l),X).
\]
\end{Proposition}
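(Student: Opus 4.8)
The plan is to run the method of Lemma~\ref{monDet} separately on the two parts of $P$ lying outside $Q$, and then recombine the two outputs through the block $Q$.

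\textbf{Reduction to a product over the components of $T\setminus Q$.} Since $V(P)=V(P_l)\sqcup V(Q)\sqcup V(P_r)$ we have $x_P/x_Q=x_{P_l}x_{P_r}$ and $T\setminus P=(T\setminus Q)\setminus(V(P_l)\cup V(P_r))$. The two $P$-edges joining $Q$ to $P_l$ and to $P_r$ are gone in $T\setminus Q$, so $P_l$ and $P_r$ lie in distinct components $C_l,C_r$ of the forest $T\setminus Q$; put $D=\prod_{C\neq C_l,C_r}d(C,X)$. Multiplicativity of $d(\cdot,X)$ over components gives
\[
\frac{x_P}{x_Q}\,d(T\setminus P,X)=\big(x_{P_l}d(C_l\setminus P_l,X)\big)\big(x_{P_r}d(C_r\setminus P_r,X)\big)\,D ,
\]
while $d(T\setminus Q,X)=d(C_l,X)\,d(C_r,X)\,D$ (if $P_l$ or $P_r$ is empty, drop the corresponding factor and the statement is immediate).

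\textbf{A one-sided refinement of Lemma~\ref{monDet}.} Let $a$ be the endpoint of $P_l$ nearest $Q$, and for $e\in L:=E(N_{T\setminus Q}(P_l))=E(N_{C_l}(P_l))$ let $W_e$ be the vertex set of the minimal path of $C_l$ containing $e$ and $a$. I claim
\[
x_{P_l}\,d(C_l\setminus P_l,X)=d(C_l,X)+\sum_{e\in L}\frac{x_{W_e}}{x_e}\,d(C_l\setminus W_e,X),
\]
and symmetrically for $C_r,P_r$. To see this, apply Lemma~\ref{TS} to $C_l$ with the set $S_0$ of edges of $C_l$ incident to $V(P_l)$, so the left side equals $\sum_{\mu\in\mathcal{V}_1(S_0)}d(C_l\setminus V(\mu),X)$; then sort the nonempty matchings $\mu$ by the edge $e\in\mu$ whose distance from $a$ along $P_l$ is largest, observe that the remaining edges of $\mu$ are exactly a matching of the set of edges of $C_l\setminus V(e)$ incident to $W_e\setminus V(e)$, and collapse that inner sum by a second use of Lemma~\ref{TS}. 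This ``farthest-from-$a$'' regrouping is the exact one-sided analogue of the size-$\ge 2$ regrouping in the proof of Lemma~\ref{monDet}, and I expect checking it (that it really is a bijection $\mu\leftrightarrow(e,\rho)$) to be the most delicate point.

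\textbf{Recombination.} Granting the one-sided identities, substitute them into the first display. For $e\in L$ one has $V(P(e,Q))=W_e\sqcup V(Q)$ and $P(e,Q)$ avoids $C_r$ and the other components, so $x_{P(e,Q)}/(x_ex_Q)=x_{W_e}/x_e$ and $d(T\setminus P(e,Q),X)=d(C_l\setminus W_e,X)\,d(C_r,X)\,D$; the $R$-terms behave symmetrically, and for $(e_l,e_r)\in L\times R$, $V(P(e_r,Q,e_l))=W_{e_l}\sqcup V(Q)\sqcup W_{e_r}$, whence $\frac{x_{P(e_l,e_r)}}{x_{e_l}x_Qx_{e_r}}d(T\setminus P(e_r,Q,e_l),X)=\frac{x_{W_{e_l}}}{x_{e_l}}d(C_l\setminus W_{e_l},X)\cdot\frac{x_{W_{e_r}}}{x_{e_r}}d(C_r\setminus W_{e_r},X)\cdot D$. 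Writing $\alpha=x_{P_l}d(C_l\setminus P_l,X)$, $\beta=d(C_l,X)$, $\gamma=x_{P_r}d(C_r\setminus P_r,X)$, $\delta=d(C_r,X)$, the four summands on the right of the Proposition are therefore $\beta\delta\,D$, $(\alpha-\beta)\delta\,D$, $\beta(\gamma-\delta)\,D$, $(\alpha-\beta)(\gamma-\delta)\,D$ (the one-sided identities being what collapse the $L$- and $R$-sums to $\alpha-\beta$ and $\gamma-\delta$), and their sum is $D$ times
\[
\beta\delta+(\alpha-\beta)\delta+\beta(\gamma-\delta)+(\alpha-\beta)(\gamma-\delta)=\alpha\gamma ,
\]
which by the first display is exactly $\frac{x_P}{x_Q}d(T\setminus P,X)$.
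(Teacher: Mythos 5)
Your proof is correct, and it reaches the identity by a somewhat different organization than the paper's. Both arguments have the same engine: apply Lemma~\ref{TS} to expose a sum over matchings of the edges incident to the removed path, group the nonempty matchings by the edge farthest from $Q$, and collapse each group by a second application of Lemma~\ref{TS}. The paper does this globally in $T'=T\setminus Q$ with the full edge set $L\cup R$, so it must split the matchings into four classes (empty, meeting only $L$, only $R$, and the mixed class $\mathcal{LR}$) and treat the cross terms explicitly; you instead factor $d(\cdot,X)$ over the components of $T\setminus Q$, prove a single one-sided identity on $C_l$ (and symmetrically on $C_r$), and recover the mixed terms for free from the algebraic expansion $\alpha\gamma=\beta\delta+(\alpha-\beta)\delta+\beta(\gamma-\delta)+(\alpha-\beta)(\gamma-\delta)$. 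What your route buys is that the only combinatorial content is the one-sided ``farthest-from-$a$'' bijection $\mu\leftrightarrow(e,\rho)$, which you state correctly: the residual edges must be those of $C_l\setminus V(e)$ \emph{incident to} $W_e\setminus V(e)$, not only those with both ends there --- this is in fact cleaner than the paper's literal definition $S_{e,Q}=\{uv\in E(T)\mid u,v\in V_{e,Q}\}$, which as written would miss matchings containing pendant edges attached to the intermediate vertices and would not isolate $V_{e,Q}$ in the collapse step (the intended set there is clearly the incident one). The delicate point you flag is indeed fine: distinct edges of a matching in $L$ have disjoint attachment sets on $P_l$, so the farthest edge is unique, and the inverse map $(e,\rho)\mapsto\{e\}\cup\rho$ lands back in $\mathcal{V}_1(L)$ with $e$ farthest. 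The only small gloss, shared with the paper, is that Lemma~\ref{TS} is applied to forests such as $C_l\setminus V(e)$; this is harmless since the determinant is multiplicative over components and a matching of $S_e$ splits accordingly.
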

\begin{proof}
Set $T'=T\setminus Q$. As $L\cup R$ is the set of edges of $N_{T'}(V(P)\setminus V(Q))$ and $V(P)\setminus V(Q)$ is free in $T'\setminus S=T\setminus P$, by Lemma~\ref{TS}
\[
\frac{x_P}{x_Q}d(T\setminus P,X)=\sum_{\nu\in \mathcal{V}_1(L\cup R)} d(T'\setminus V(\nu),X).
\]
For each $e\in L$ let $P(e,Q)$ be the path in $T$ that join the vertices in $e$ and $Q$ and set $V_{e,Q}=V(P(e,Q))\setminus (V(e)\cup V(Q))$. 
If we set $S_{e,Q}=\{uv\in E(T)|u,v\in V_{e,Q}\}$, then $S_{e,Q}$ is a set of edges on $T_{e,Q}=T\setminus (V(e)\cup V(Q))$. 
Thus by Lemma~\ref{TS},
\[
d(T_{e,Q}\setminus S_{e,Q},X)=\sum_{\nu\in \mathcal{V}_1(S_{e,Q})} d(T_{e,Q}\setminus V(\nu),X).
\]
Since $T_{e,Q}\setminus S_{e,Q}=T\setminus P(e,Q)\cup V_{e,Q}$ and $\mathcal{V}_1(L)=\{\emptyset\}\cup_{e\in L} \{\{e\}\cup\mathcal{V}_1(S_{e,Q})\}$,
\[
\sum_{\nu\in\mathcal{V}_1(L)\setminus\{\emptyset\}}d(T'\setminus V(\nu),X)=\sum_{e\in L}\ \sum_{\nu\in\mathcal{V}_1(S_{e,Q})}d(T'\setminus V(\{e\}\cup\nu),X)\hspace*{2cm}
\]
\[
=\sum_{e\in L}\ \sum_{\nu\in\mathcal{V}_1(S_e)}d(T_{e,Q}\setminus V(\nu),X)=\sum_{e\in L}\ x_{V_{e,Q}}d(T\setminus P(e,Q),X).
\]

In the same way, we get an expression that involves $\mathcal{V}_1(R)$. 

Set $\mathcal{LR}$ as the (non-empty) matchings on $L\cup R$ that intersect both $L$ and $R$. 
For each $e_l\in L$ and $e_r\in R$, let $P(e_l,e_r)$ be the only path that joins $e_l$ and $e_r$. 
If we set $V_{e_l,e_r}=V(P(e_l,e_r))\setminus (V(e_l)\cup V(Q)\cup V(e_r))$ and $S_{e_l,e_r}=\{uv\in E(T)|u,v\in V_{e_l,e_r}\}$, 
then $S_{e_l,e_r}$ is a set of edges on $T_{e_l,e_r}=T\setminus (V(e_l)\cup Q \cup V(e_r))$. 
By Lemma~\ref{TS},
\[
d(T_{e_l,e_r}\setminus S_{e_l,e_r},X)=\sum_{\nu\in\mathcal{V}_1(S_{e_l,e_r})} d(T_{e_l,e_r}\setminus V(\nu),X).
\]
Noting that $T_{e_l,e_r}\setminus S_{e_l,e_r}= T\setminus P(e_l,e_r)+V_{e_r,e_l}$, $\mathcal{LR}=\bigcup_{e_l\in L}\bigcup_{e_r\in R} \{\{e_l,e_r\}\cup \mathcal{V}_1(S_{e_l,e_r})\}$ 
and that for each $\nu\in\mathcal{V}_1(S_{e_l,e_r})$, $T'\setminus V(\{e_l,e_r\}\cup\nu)=T_{e_l,e_r}\setminus V(\nu)$, we get
\[
\sum_{\nu\in\mathcal{LR}}d(T'\setminus V(\nu),X)=\sum_{e_l\in L}\sum_{e_r\in R}\ \sum_{\nu\in\mathcal{V}_1(S_{e_l,e_r})}d(T_{e_l,e_r}\setminus V(\nu),X)
\]
\[=\sum_{e\in L}\sum_{e_r\in R}\ d(T_{e_l,e_r}\setminus S_{e_l,e_r},X)=\sum_{e\in L}\sum_{e_r\in R}\ x_{V_{e_l,e_r}}d(T\setminus P(e,Q),X).
\]
This completes the proof of the theorem, as $\mathcal{V}_1(L\cup R)=\mathcal{V}_1(L)\cup\mathcal{V}_1(R)\cup \mathcal{LR}$.
\end{proof}

The two last results are used to establish the main result of this section.

\begin{Theorem}\label{Groebner}
If $T$ is a tree with $n$ vertices, then 
\[
\mathcal{B}_{n-1}=\{d(\mathcal{M},X)\, | \, \mathcal{M}\in \mathcal{V}_2^*(T^\ell,n-1)\}
\] 
is a reduced Gr\"obner basis for $I_{n-1}(T,X)$ with respect to the degree lexicographic order.
\end{Theorem}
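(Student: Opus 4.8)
The plan is to verify the Buchberger criterion (Lemma~\ref{Buch}) for the generating set $\mathcal{B}_{n-1}$. Since we already know from Theorem~\ref{TeoRed} that $\mathcal{B}_{n-1}$ generates $I_{n-1}(T,X)$, it suffices to show that for any two distinct $2$-matchings $\mathcal{M},\mathcal{N}\in\mathcal{V}_2^*(T^\ell,n-1)$ the $S$-polynomial $S(d(\mathcal{M},X),d(\mathcal{N},X))$ reduces to $0$ modulo $\mathcal{B}_{n-1}$. By Proposition~\ref{reductiontomin}, strong reduction by $\mathcal{V}_2(T^\ell,n-1)$ is the same as strong reduction by $\mathcal{V}_2^*(T^\ell,n-1)=\mathcal{B}_{n-1}$, so I am free to reduce using \emph{any} $2$-matching of size $n-1$, minimal or not, and to invoke Lemma~\ref{Lemmarel}, which identifies the leading term of $d(\mathcal{M},X)$ with the monomial $x_{\ell(\mathcal{M})}$.

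First I would set up the combinatorial dictionary. By the Remark following Lemma~\ref{monDet}, a size-$(n-1)$ $2$-matching $\mathcal{M}$ is exactly the data of a path $P=P_\mathcal{M}$ in $T$ with $d(\mathcal{M},X)=d(T\setminus P,X)$ and $\mathrm{lt}(d(\mathcal{M},X))=x_{V(P)}$ (the product of the variables at the vertices \emph{outside} the matched edges, i.e.\ on $P$). So the $S$-polynomial of $d(T\setminus P,X)$ and $d(T\setminus Q,X)$ involves the monomial $\mathrm{lcm}(x_{V(P)},x_{V(Q)})=x_{V(P)\cup V(Q)}$; its two terms are $\frac{x_{V(P)\cup V(Q)}}{x_{V(P)}}\,d(T\setminus P,X)=\frac{x_{V(Q)}}{x_{V(P\cap Q)}}\,d(T\setminus P,X)$ and the symmetric expression with $P,Q$ swapped, where $P\cap Q$ denotes the intersection of the two vertex sets (a subpath, since $T$ is a tree). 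The key point is that Lemma~\ref{monDet} (when $P\cap Q$ is empty, one path contained in "nothing") and Proposition~\ref{monsubDet} (when $Q':=P\cap Q$ is a nonempty common subpath) give \emph{exactly} the expansion of each of these two products as a $\mathbb{Z}[X]$-combination of minors $d(T\setminus P',X)$ with $P'$ a path, each appearing with a monomial coefficient, and — crucially — with all resulting terms of strictly smaller leading term than $x_{V(P)\cup V(Q)}$.

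Then I would subtract: applying Proposition~\ref{monsubDet} to the pair $(P,Q')$ and to the pair $(Q,Q')$ and taking the difference, the "diagonal" terms $d(T\setminus Q',X)$ cancel, and what remains is $S(d(\mathcal{M},X),d(\mathcal{N},X))$ written as a sum $\sum c_i\, d(T\setminus P_i,X)$ of minors of size $n-1$ with $\mathrm{lt}(c_i\,d(T\setminus P_i,X))$ strictly below $x_{V(P)\cup V(Q)}$ and, by the structure of the two expansions, pairwise distinct (the $P_i$ run over paths of the form $P(e,Q')$, $P(Q',e)$, $P(e_l,Q',e_r)$ for edges $e$ in the relevant neighborhoods). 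This exhibits $S(d(\mathcal{M},X),d(\mathcal{N},X))\to_{\mathcal{V}_2(T^\ell,n-1)}0$, hence by Proposition~\ref{reductiontomin} it reduces to $0$ modulo $\mathcal{B}_{n-1}$, and Buchberger's criterion yields that $\mathcal{B}_{n-1}$ is a Gr\"obner basis. For "reduced": each $d(\mathcal{M},X)$ has leading coefficient $1$ by Lemma~\ref{Lemmarel}, and if two distinct minimal $2$-matchings $\mathcal{M}\neq\mathcal{N}$ had $x_{\ell(\mathcal{M})}\mid x_{\ell(\mathcal{N})}$ then $\ell(\mathcal{M})\subsetneq\ell(\mathcal{N})$ forces a contradiction with minimality of $\mathcal{N}$ (shortening the path $P_\mathcal{N}$ and replacing a loop by an edge keeps the size $n-1$ while dropping a loop); more care is needed to check no \emph{non-leading} term of $d(\mathcal{M},X)$ is divisible by any $\mathrm{lp}(d(\mathcal{N},X))=x_{\ell(\mathcal{N})}$, which again follows from Lemma~\ref{Lemmarel} describing the lower-order terms of $d(\mathcal{M},X)$ as sums of $x_{\ell(\mathcal{M}')}$ for $\ell(\mathcal{M}')\subsetneq\ell(\mathcal{M})$ together with the same minimality argument.

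The main obstacle I anticipate is the bookkeeping in the subtraction step: one must handle the several cases for how the paths $P_\mathcal{M}$ and $P_\mathcal{N}$ sit relative to each other in $T$ (disjoint, nested, overlapping in a common subpath, sharing only an endpoint), and in each case match the terms produced by Lemma~\ref{monDet}/Proposition~\ref{monsubDet} so that the diagonal terms cancel and the survivors are genuinely a \emph{strong} reduction, i.e.\ their leading terms are all distinct and all strictly smaller than $x_{V(P_\mathcal{M})\cup V(P_\mathcal{N})}$. Verifying the strict-decrease and distinctness claims is where the degree-lexicographic order and the explicit monomial coefficients $x_{P(e_1,e_2)}/(x_{e_1}x_{e_2})$ appearing in those two results must be used carefully; everything else is formal once the dictionary between size-$(n-1)$ $2$-matchings and paths is in place.
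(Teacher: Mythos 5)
Your strategy is the same as the paper's: Buchberger's criterion together with Theorem~\ref{TeoRed}, the dictionary between size-$(n-1)$ minimal $2$-matchings and leaf-to-leaf paths, and the expansions of Lemma~\ref{monDet} (disjoint paths) and Proposition~\ref{monsubDet} (paths meeting in a subpath $Q$), with the ``diagonal'' terms $d(T,X)$, resp.\ $d(T\setminus Q,X)$, cancelling in the difference. Your explicit appeal to Proposition~\ref{reductiontomin} to pass from reduction by arbitrary size-$(n-1)$ $2$-matchings to reduction by $\mathcal{B}_{n-1}$ is a point the paper leaves implicit (the expansions do produce minors $d(T\setminus P',X)$ for paths $P'$ that need not join two leaves), and your sketch of reducedness goes slightly beyond what the paper writes down.

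There is, however, a concrete error in your dictionary. By Lemma~\ref{Lemmarel}, the leading term of $d(\mathcal{M},X)=d(T\setminus P,X)$ is $x_{\ell(\mathcal{M})}=\prod_{w\notin V(P)}x_w$, the product over the vertices \emph{off} the path (those carrying loops), not $x_{V(P)}$ as you assert. Consequently the cofactor of $d(T\setminus P,X)$ in the $S$-polynomial is $x_{V(P)\setminus V(Q)}=x_P/x_{P\cap Q}$, not $x_{V(Q)}/x_{V(P\cap Q)}$. This is not a harmless relabelling: Lemma~\ref{monDet} and Proposition~\ref{monsubDet} expand products whose monomial factor is supported on the removed path $P$ (respectively on $P$ minus a subpath of $P$), so with your cofactor $x_{V(Q)\setminus V(P)}$ --- supported away from $P$ --- neither result applies, and the claimed term-by-term cancellation cannot be carried out as written. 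Once the leading term is corrected, the two halves of the $S$-polynomial become $x_{P_1}\,d(T\setminus P_1,X)-x_{P_2}\,d(T\setminus P_2,X)$ when $P_1\cap P_2=\emptyset$ and $\frac{x_{P_1}}{x_Q}\,d(T\setminus P_1,X)-\frac{x_{P_2}}{x_Q}\,d(T\setminus P_2,X)$ with $Q=P_1\cap P_2$ otherwise, which are exactly the expressions the paper feeds into Lemma~\ref{monDet} and Proposition~\ref{monsubDet}; from that point on your plan coincides with the paper's proof.
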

\begin{proof}
By Proposition~\ref{Buch} and Theorem~\ref{TeoRed} we only need to prove that $S(f,g)\rightarrow_{\mathcal{B}_{n-1}} 0$ for all $f,g\in\mathcal{B}_{n-1}$.
If $\mathcal{M}_1,\mathcal{M}_2\in\mathcal{V}_2^*(T^\ell,n-1)$, then there are two paths $P_1$ and $P_2$ in $T$ such that $d(\mathcal{M}_i,X)=d(T\setminus P_i,X)$.

We can suppose that neither $P_1$ or $P_2$ is empty and that $P_1\neq P_2$, thus
\[
S(d(\mathcal{M}_1,X),d(\mathcal{M}_2,X))=x_{P_2^c\setminus P_1^c}\,d(T\setminus P_1,X)
-x_{P_1^c\setminus P_2^c}\,d(T\setminus P_2,X),
\]
where $P_i^c=V(T)\setminus V(P_i)$. If $P_1\cap P_2=\emptyset$, then
\[
S(d(\mathcal{M}_1,X),d(\mathcal{M}_2,X))=x_{P_1}\,d(T\setminus P_1,X)-x_{P_2}\,d(T\setminus P_2,X).
\]
By Lemma~\ref{monDet}, $S(d(\mathcal{M}_1,X),d(\mathcal{M}_2,X))\rightarrow_\mathcal{G} 0$.

If $P_1\cap P_2\neq\emptyset$, then this must be a path. If we set $Q=P_1\cap P_2$, then
\[
S(d(\mathcal{M}_1,X),d(\mathcal{M}_2,X))=\frac{x_{P_1}}{x_Q}\,d(T\setminus P_1,X)-\frac{x_{P_2}}{x_Q}\,d(T\setminus P_2,X),
\]
and by Proposition~\ref{monsubDet} $S(d(\mathcal{M}_1,X),d(\mathcal{M}_2,X))\rightarrow_\mathcal{G} 0$.
\end{proof}

The next result gives us an alternative and more compact description of the minimal $2$-matchings of $T^\ell$ of size $n-1$.
\begin{Proposition}\label{paths}
If $P_{u,v}$ is the unique path in $T$ that joins the vertices $u$ and $v$, then
\[
\mathcal{V}_2^*(T^\ell,n-1)=\{P_{u,v} \cup \{ww\, | \, w\notin V(P_{u,v})\}\,|\,u \text{ and }v\textrm{ are leaves of } T\}.
\]
\end{Proposition}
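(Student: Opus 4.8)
The plan is to combine the structural description of size-$(n-1)$ $2$-matchings of $T^\ell$ (recorded in the remark immediately preceding the statement) with a short "inextensible path" argument.

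First I would pin down the shape of an arbitrary $\mathcal{M}\in\mathcal{V}_2(T^\ell,n-1)$. A vertex carrying a loop of $\mathcal{M}$ already has degree $2$ in $\mathcal{M}$, hence is incident to no non-loop edge of $\mathcal{M}$; so the non-loop edges of $\mathcal{M}$ all lie inside $V(T)\setminus\ell(\mathcal{M})$. By the remark preceding the proposition, $\mathcal{M}\in\mathcal{V}_2(T^\ell,n-1)$ forces $P:=T\setminus\ell(\mathcal{M})$ to be a path (possibly a single vertex), and since $T$ is a tree the induced subgraph on $V(P)$ is exactly $P$; counting edges,
\[
n-1=|\mathcal{M}|=|\ell(\mathcal{M})|+\#\{\text{non-loop edges of }\mathcal{M}\}=(n-|V(P)|)+\#\{\text{non-loop edges of }\mathcal{M}\}
\]
forces the non-loop edges of $\mathcal{M}$ to be exactly $E(P)$. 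Thus every element of $\mathcal{V}_2(T^\ell,n-1)$ has the form $\mathcal{M}_P:=E(P)\cup\{ww\mid w\notin V(P)\}$ for a (possibly trivial) path $P$ of $T$, and conversely each such $\mathcal{M}_P$ is a $2$-matching of size $n-1$.

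Next I would characterize which $\mathcal{M}_P$ are minimal in the sense of Definition~\ref{defbas}. If $P$ is a proper subpath of some path $P'$ of $T$ (equivalently $V(P)\subsetneq V(P')$), then $\mathcal{M}_{P'}\in\mathcal{V}_2(T^\ell,n-1)$ and $\ell(\mathcal{M}_{P'})=\{ww\mid w\notin V(P')\}\subsetneq\{ww\mid w\notin V(P)\}=\ell(\mathcal{M}_P)$, so $\mathcal{M}_P$ is not minimal; conversely, since every size-$(n-1)$ $2$-matching is of this form, a witness to non-minimality of $\mathcal{M}_P$ must be some $\mathcal{M}_{P'}$ with $V(P)\subsetneq V(P')$. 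Hence $\mathcal{M}_P$ is minimal iff $P$ is inextensible inside $T$. Finally I would translate inextensibility into "both endpoints of $P$ are leaves": an endpoint $u$ of $P$ with $\deg_T(u)\ge 2$ has a neighbour $w\notin V(P)$ (otherwise $T$ contains a cycle), so $P+uw$ properly extends $P$, and the trivial one-vertex path is always extensible once $n\ge2$; conversely, if $u\neq v$ are leaves of $T$ then $P_{u,v}$ is the path joining two leaves of a tree, which no path of $T$ contains properly. Combining these equivalences yields exactly $\mathcal{V}_2^*(T^\ell,n-1)=\{\mathcal{M}_{P_{u,v}}\mid u\neq v\text{ leaves of }T\}$, as asserted. (One can alternatively obtain the "these are minimal" direction directly from Proposition~\ref{minmax}.)

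I do not expect a genuine obstacle; the only care required is (i) verifying the classification of $\mathcal{V}_2(T^\ell,n-1)$ is exhaustive, so that the converse direction of the minimality argument is legitimate, and (ii) correctly handling the degenerate single-vertex path: $\mathcal{M}_{\{w\}}=\{uu\mid u\neq w\}$ lies in $\mathcal{V}_2(T^\ell,n-1)$ but is never minimal for $T$ with at least one edge, and it is not of the form $\mathcal{M}_{P_{u,v}}$ — so its exclusion is precisely what the proposition predicts.
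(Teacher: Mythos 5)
Your proof is correct and takes essentially the same route as the paper's: both reduce an arbitrary size-$(n-1)$ $2$-matching of $T^\ell$ to a single path together with loops on the remaining vertices, and then identify minimality with the impossibility of properly extending that path, i.e.\ with both endpoints being leaves. Your explicit counting step and your treatment of the edgeless (one-vertex path) case just make precise what the paper states more briefly, and your direct argument for the inclusion $\supseteq$ replaces the paper's appeal to Proposition~\ref{minmax} with an equivalent observation.
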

\begin{proof}
If $P$ is any path in $T$, then by Proposition~\ref{minmax}, $\mathcal{M}=P\cup\{ww\, |\, w\not\in V(P)\}$ is a minimal $2$-matching of size $n-1$ of $T^\ell$.
Therefore, we need to prove that if $\mathcal{M}\in\mathcal{V}_2^*(T^\ell,n-1)$, 
then $\mathcal{M}=P_{u,v}\cup \{ww\, | \, w\notin V(P_{u,v})\}$ for some $u$, $v$ leaves of $T$.

Let $\mathcal{M}\in\mathcal{V}_2^*(T^\ell,n-1)$.
If $\mathcal{M}$ has no edges, that is, $\mathcal{M}$ has $n-1$ loops, then suppose $u\in V(T)$ is such that $u\not\in V(\mathcal{M})$ and suppose $v\in V(T)$ satisfies  $uv\in E(G)$.
Since $\mathcal{M}'=\{uv\}\cup(\mathcal{M}\setminus \{vv\})$ has size $n-1$ and $\ell(\mathcal{M}')\subsetneq\ell(\mathcal{M})$, then $\mathcal{M}$ is not minimal.
Thus, $\mathcal{M}$ contains at least one path.
Furthermore, since $T$ has $n$ vertices, $\mathcal{M}$ has exactly one path. 
Let $P=\mathcal{M}\setminus\ell(\mathcal{M})$.
If $P'$ is a path in $T$ such that $P\subsetneq P'$, then $\mathcal{N}=P'\cup \{uu|u\not\in V(P')\}$ is a $2$-matching of size $n-1$ and $\ell(\mathcal{N})\subsetneq \ell(\mathcal{M})$, 
a contradiction to the minimality of $\mathcal{M}$.
Therefore, $P$ is maximal in the sense that $P$ is equal to $P_{u,v}$ for some leaves $u,v$ and $\mathcal{M}=P_{u,v}\cup\{ww\, |\, w\not\in V(P_{u,v})\}$.
\end{proof}

\begin{Remark}
If $T$ is a tree with $n$ vertices and $m$ leaves, then $\mathcal{B}_{n-1}$ contains $\binom{m}{2}$ polynomials.
\end{Remark}

Now, we present a conjecture about the minimality of the generating sets found in Theorem~\ref{TeoRed}.
In the next section we present several examples of the validity of this conjecture.

\begin{Conjecture}\label{conj}
If $T$ is a tree and $1\leq j\leq n$, then
\[
\mathcal{B}_j=\{d(\mathcal{M},X)\, |\, \mathcal{M}\in \mathcal{V}_2^*(T^\ell,j)\}
\]
is a reduced Gr\"obner basis for $I_j(T,X)$ with respect to the degree lexicographic order.
\end{Conjecture}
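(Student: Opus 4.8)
The plan is to run the same strategy as in the proof of Theorem~\ref{Groebner}, which is the case $j=n-1$. When $1\leq j\leq\nu_2(T)$ one has $\mathcal{V}_2^*(T^\ell,j)=\mathcal{V}_2(T,j)$ and $d(\mathcal{M},X)=1$ for every loopless $2$-matching $\mathcal{M}$ by Lemma~\ref{Lemmarel}, so $\mathcal{B}_j=\{1\}$ is a reduced Gr\"obner basis of $I_j(T,X)=\langle 1\rangle$; by Theorem~\ref{TeoTreeGamma} we may thus assume $j>\nu_2(T)=\gamma(T)$. By Theorem~\ref{TeoRed} the set $\mathcal{B}_j$ generates $I_j(T,X)$, so by Buchberger's criterion (Lemma~\ref{Buch}) it is enough to show $S\bigl(d(\mathcal{M}_1,X),d(\mathcal{M}_2,X)\bigr)\rightarrow_{\mathcal{B}_j}0$ for all $\mathcal{M}_1\neq\mathcal{M}_2$ in $\mathcal{V}_2^*(T^\ell,j)$; and by Proposition~\ref{reductiontomin} it suffices to produce such a reduction using the larger family $\mathcal{V}_2(T^\ell,j)$, afterwards converting it to a reduction modulo $\mathcal{B}_j$. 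The ``reduced'' half of the statement is comparatively easy: by Lemma~\ref{Lemmarel} the leading term of $d(\mathcal{M},X)$ is the squarefree monomial $x_{\ell(\mathcal{M})}$, so $\mathrm{lc}(d(\mathcal{M},X))=1$; and since $d(\mathcal{M},X)=d(T[\ell(\mathcal{M})],X)$, the matching formula of \cite[Lemma 4.4]{Corrales} shows that every monomial of $d(\mathcal{M}_1,X)$ divides $x_{\ell(\mathcal{M}_1)}$, so divisibility of one of them by $x_{\ell(\mathcal{M}_2)}$ would give $\ell(\mathcal{M}_2)\subseteq\ell(\mathcal{M}_1)$, and with $|\mathcal{M}_1|=|\mathcal{M}_2|=j$ the minimality of $\mathcal{M}_1$ forces equality, i.e.\ $d(\mathcal{M}_1,X)=d(\mathcal{M}_2,X)$.

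Since both leading monomials are squarefree, $\mathrm{lcm}(x_{\ell(\mathcal{M}_1)},x_{\ell(\mathcal{M}_2)})=x_{\ell(\mathcal{M}_1)\cup\ell(\mathcal{M}_2)}$ and the $S$-polynomial collapses to
\[
S\bigl(d(\mathcal{M}_1,X),d(\mathcal{M}_2,X)\bigr)=x_{\ell(\mathcal{M}_2)\setminus\ell(\mathcal{M}_1)}\,d(\mathcal{M}_1,X)-x_{\ell(\mathcal{M}_1)\setminus\ell(\mathcal{M}_2)}\,d(\mathcal{M}_2,X).
\]
The heart of the argument is to establish, for general minimal $2$-matchings, the analogue of Lemma~\ref{monDet} and Proposition~\ref{monsubDet}: for $\mathcal{M}_1\in\mathcal{V}_2^*(T^\ell,j)$ and a vertex set $W$ disjoint from $\ell(\mathcal{M}_1)$ (here $W=\ell(\mathcal{M}_2)\setminus\ell(\mathcal{M}_1)$), an explicit identity
\[
x_W\,d(T[\ell(\mathcal{M}_1)],X)=\sum_{\mathcal{N}}\varepsilon_{\mathcal{N}}\,m_{\mathcal{N}}\,d(\mathcal{N},X),
\]
where $\mathcal{N}$ ranges over $2$-matchings of $T^\ell$ of size $j$, each $m_{\mathcal{N}}$ is a monomial, each $\varepsilon_{\mathcal{N}}\in\{1,-1\}$, exactly one summand carries the leading monomial $x_W x_{\ell(\mathcal{M}_1)}$, and every other summand has strictly smaller leading term. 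Such an identity exists in principle by applying Corollary~\ref{xminor} once per vertex of $W$ and then expanding the resulting non-minimal minors via Theorem~\ref{TeoRed}. The task is to carry this out for both $x_{\ell(\mathcal{M}_2)\setminus\ell(\mathcal{M}_1)}\,d(\mathcal{M}_1,X)$ and $x_{\ell(\mathcal{M}_1)\setminus\ell(\mathcal{M}_2)}\,d(\mathcal{M}_2,X)$ so that their top summands coincide and cancel in $S$, and so that the surviving summands, taken together, have pairwise distinct leading terms, all $\prec\mathrm{lt}(S)$; this is exactly what is needed to conclude $S\rightarrow_{\mathcal{B}_j}0$, hence the theorem, via Lemma~\ref{Buch}.

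The hard part will be the combinatorics of the two families of $n-j$ pairwise disjoint paths underlying $\mathcal{M}_1$ and $\mathcal{M}_2$. For $j=n-1$ each matching is carried by a single path $P_i$, so $P_1\cap P_2$ is automatically a subpath and the neighborhood sums in Lemma~\ref{monDet} and Proposition~\ref{monsubDet} remain tame; for general $j$ a path of one family can meet several paths of the other, and the expansion of $x_W\,d(T[\ell(\mathcal{M}_1)],X)$ turns into a multi-path inclusion--exclusion over matchings in a union of several neighborhoods. Keeping simultaneous control of the cancellation of top terms and of the strict decrease of all lower terms in that generality is the real obstacle. An alternative route --- induction on $|V(T)|$ via the recursive containment for $\mathcal{V}_2^*(T^\ell)$ established in Section~\ref{matchings}, together with \cite[Claim 3.12]{Corrales} --- does not seem to help directly: both of these are only inclusions, so a Gr\"obner basis for $I_j(T,X)$ does not follow formally from Gr\"obner bases of the two components $T_u,T_v$ of $T\setminus e$, and reassembling them would again demand the $S$-polynomial reductions across $e$ that are the crux.
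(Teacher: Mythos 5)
You have not given a proof; you have given a plan whose decisive step is missing. The easy parts are fine: the case $j\leq\nu_2(T)$ via Lemma~\ref{Lemmarel} and Theorem~\ref{TeoTreeGamma}, the fact that $\mathcal{B}_j$ generates $I_j(T,X)$ (Theorem~\ref{TeoRed}), the reducedness argument from the squarefree leading terms $x_{\ell(\mathcal{M})}$ together with minimality, and the shape of the $S$-polynomial are all correct. But the entire content of the statement is the claim that $S\bigl(d(\mathcal{M}_1,X),d(\mathcal{M}_2,X)\bigr)\rightarrow_{\mathcal{B}_j}0$, and for that you only postulate an identity
\[
x_W\,d(T[\ell(\mathcal{M}_1)],X)=\sum_{\mathcal{N}}\varepsilon_{\mathcal{N}}\,m_{\mathcal{N}}\,d(\mathcal{N},X)
\]
with one top summand and all others strictly smaller, saying it ``exists in principle'' by iterating Corollary~\ref{xminor} and invoking Theorem~\ref{TeoRed}. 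That inference is not valid as stated: membership of $x_W\,d(\mathcal{M}_1,X)$ in $I_j(T,X)$, or an arbitrary representation of it in terms of $\mathcal{B}_j$, does not yield a strong reduction. For Buchberger's criterion (Lemma~\ref{Buch}) you need, \emph{after} the cancellation of the two top terms $x_{\ell(\mathcal{M}_1)\cup\ell(\mathcal{M}_2)}$ coming from the two halves of $S$, a representation in which the products $h\cdot d(\mathcal{N},X)$ have pairwise distinct leading terms, all $\prec\mathrm{lt}(S)$; uncontrolled expansions can produce clashing or larger intermediate leading terms. This is exactly what Lemma~\ref{monDet} and Proposition~\ref{monsubDet} supply in the case $j=n-1$, where each minimal $2$-matching is carried by a single path, and it is exactly what you concede you cannot yet control when $\ell(\mathcal{M}_1)$ and $\ell(\mathcal{M}_2)$ are complements of unions of $n-j$ paths that interlace. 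Proposition~\ref{reductiontomin} only converts a reduction modulo $\mathcal{V}_2(T^\ell,j)$ into one modulo $\mathcal{V}_2^*(T^\ell,j)$; it does not produce the reduction in the first place.

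Be aware also that the statement you are attacking is left as an open conjecture in the paper: only the case $j=n-1$ is proved there (Theorem~\ref{Groebner}), precisely because the multi-path analogue of Lemma~\ref{monDet}/Proposition~\ref{monsubDet} is not available. So your outline reproduces the known reduction of the problem to that analogue, but it does not close the gap; until you actually prove the displayed identity with the stated leading-term control (or find another route to the $S$-polynomial reductions), you have restated the conjecture rather than proved it.
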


We finish this section with an example that shows how to get the $n-1$-critical ideal of a tree with $n$ vertices.

\begin{Example}
Let $n_1,n_2,n_3\geq 2$ and $J(n_1,n_2,n_3)$ be the tree with a vertex $r$ as root and three paths $P_{n_1}$, $P_{n_2}$, and $P_{n_3}$ from it of lengths $n_1,n_2$ and $n_3$, see Figure~\ref{JJJ}. 
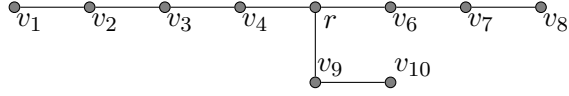
\begin{figure}[ht]
\begin{center}
\begin{tikzpicture}[every loop/.style={}]
\draw (0,0) {
	(-3.5,0) node[draw, circle, fill=gray, inner sep=0pt, minimum width=4pt] (v1) {}
	(-2.5,0) node[draw, circle, fill=gray, inner sep=0pt, minimum width=4pt] (v2) {}
	(-1.5,0) node[draw, circle, fill=gray, inner sep=0pt, minimum width=4pt] (v3) {}
	(-0.5,0) node[draw, circle, fill=gray, inner sep=0pt, minimum width=4pt] (v4) {}
	(0.5,0) node[draw, circle, fill=gray, inner sep=0pt, minimum width=4pt] (v5) {}
	(1.5,0) node[draw, circle, fill=gray, inner sep=0pt, minimum width=4pt] (v6) {}
	(2.5,0) node[draw, circle, fill=gray, inner sep=0pt, minimum width=4pt] (v7) {}
	(3.5,0) node[draw, circle, fill=gray, inner sep=0pt, minimum width=4pt] (v8) {}
	(0.5,-1) node[draw, circle, fill=gray, inner sep=0pt, minimum width=4pt] (v9) {}
	(1.5,-1) node[draw, circle, fill=gray, inner sep=0pt, minimum width=4pt] (v10) {}	
	(v1) to (v2) (v2) to (v3) (v3) to (v4) (v4) to (v5) (v5) to (v6) (v6) to (v7) (v7) to (v8)
	(v5) to (v9) (v9) to (v10)	
	(v1)+(0.2,-0.2) node {$v_1$}
	(v2)+(0.2,-0.2) node {$v_2$}
	(v3)+(0.2,-0.2) node {$v_3$}
	(v4)+(0.2,-0.2) node {$v_4$}
	(v5)+(0.2,-0.2) node {$r$}
	(v6)+(0.2,-0.2) node {$v_6$}
	(v7)+(0.2,-0.2) node {$v_7$}
	(v8)+(0.2,-0.2) node {$v_8$}
	(v9)+(0.2,0.2) node {$v_9$}
	(v10)+(0.3,0.2) node {$v_{10}$}
	};
\end{tikzpicture}
\caption{The tree $J(5,4,3)$.}
\label{JJJ}
\end{center}
\end{figure}

If $n=n_1+n_2+n_3-2=|J(n_1,n_2,n_3)|$, then by Theorem~\ref{Groebner}
\[
I_{n-1}(J(n_1,n_2,n_3),X)=\langle \det(P_{n_1}\!\setminus r,X),\det(P_{n_2}\!\setminus r,X),\det(P_{n_3}\!\setminus r,X)\rangle
\]
In particular $I_9(J(5,4,3),X)=\langle x_1x_2x_3x_4-x_1x_2-x_3x_4-x_1x_4+1,x_6x_7x_8-x_6-x_8,x_9x_{10}-1\rangle$.
\end{Example}


\section{Applications to the critical group}\label{applications}
Although the critical group of a tree is always trivial, the critical ideals of trees can be used to obtain 
information about the structure of the critical groups associated to a large class of interesting graphs. 
This section is devoted to presenting applications to trees of the results of Sections~\ref{trees} and~\ref{grobner}.


\subsection{Trees of depths one and two}
We begin with the trees of depth one, or stars, which are, along with the paths, the simplest trees. 
Note that the star with two leaves is denoted by $P_3$. 

\begin{Theorem}\label{stars}
Let $S(m)$ be the star with root $r$ and leaves $\{1,2,\ldots,m\}$. If $m\geq 3$, then $\gamma(S(m))=2$, for each $1\leq k\leq m-2$
\[
I_{2+k}(S(m),X)=\left\langle\ \prod_{s=1}^k x_{j_s} \, \Big| \, 1\leq j_1<\cdots <j_k\leq m\right\rangle,
\]
and $I_{m+1}=\langle x_rx_1\cdots x_m - x_1\cdots x_{m-1} - x_1\cdots x_{m-2}x_{m} - \cdots - x_2\cdots x_m \rangle$.
\end{Theorem}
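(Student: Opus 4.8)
The statement about $S(m)$ has three parts, and all three should fall out of the combinatorial machinery of Sections~\ref{matchings} and~\ref{trees}. First I would compute $\nu_2(S(m))$. A $2$-matching of $S(m)$ is a union of paths; since every edge of $S(m)$ is incident to $r$, any path in $S(m)$ has at most two edges (a path $i$--$r$--$j$), and $r$ can carry at most two incident edges in a $2$-matching. Hence the maximum $2$-matching has exactly two edges, $\nu_2(S(m))=2$, and by Theorem~\ref{TeoTreeGamma}, $\gamma(S(m))=\nu_2(S(m))=2$. This handles the first assertion and tells us that $I_j(S(m),X)=\langle 1\rangle$ for $j\le 2$ and is proper for $j\ge 3$.

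\textbf{The critical ideals $I_{2+k}$.} Here I would invoke Theorem~\ref{TeoRed}: $I_{2+k}(S(m),X)$ is generated by $d(\mathcal{M},X)$ over $\mathcal{M}\in\mathcal{V}_2^*(S(m)^\ell,2+k)$. Since $\nu_2(S(m))=2$, every such $\mathcal{M}$ has at least $k\ge 1$ loops, hence contains at most two non-loop edges. One checks the possible shapes of a minimal $2$-matching of size $2+k$: it consists of $k$ loops at distinct leaves together with either (a) a two-edge path $i$--$r$--$j$ plus nothing, which has size $k+2$ only if no extra edges are forced — but then $r$ is unused as a loop and minimality must be checked; or (b) loops only. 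The key point is that a minimal $2$-matching cannot have a loop at $r$ if it could instead carry an edge at $r$; and a $2$-matching with two edges at $r$ uses up two leaves, forcing the remaining $k$ loops on the other leaves. Running Lemma~\ref{Lemmarel} on each surviving minimal $\mathcal{M}$, $d(\mathcal{M},X)=x_{\ell(\mathcal{M})}+(\text{lower order})$; when $\ell(\mathcal{M})$ consists of $k$ leaf-loops the minor is exactly $\prod_{s=1}^k x_{j_s}$ because the complementary $2\times 2$ block (the path through $r$) contributes determinant $1$. I would verify that precisely the products $\prod_{s=1}^{k}x_{j_s}$ with $1\le j_1<\cdots<j_k\le m$ arise, and that any generator involving a loop at $r$ is redundant (it reduces, via Corollary~\ref{xminor} / the reduction in Theorem~\ref{TeoRed}, to these leaf-only monomials). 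This gives the displayed formula for $1\le k\le m-2$.

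\textbf{The top ideal $I_{m+1}$.} Since $S(m)$ has $m+1$ vertices, $I_{m+1}(S(m),X)$ is the ideal generated by $\det L(S(m),X)$ itself (the unique full-size minor, up to sign, of a connected graph). I would compute this determinant directly by cofactor expansion along the row of $r$, or equivalently use the formula $d(G,X)=\sum_{\mu\in\mathcal{V}_1(G)}(-1)^{|\mu|}\prod_{v\notin V(\mu)}x_v$ cited in the proof of Lemma~\ref{TS}: the matchings of $S(m)$ are the empty matching and the $m$ single edges $\{r i\}$, giving $x_r x_1\cdots x_m - \sum_{i=1}^{m} \prod_{j\ne i} x_j$, which is exactly the claimed generator. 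Alternatively, using Proposition~\ref{paths}, $\mathcal{V}_2^*(S(m)^\ell,m)$ consists of the paths $P_{i,j}$ through $r$ plus loops on the remaining leaves, and Theorem~\ref{Groebner} expresses $I_m$ as the ideal of those minors — but for $I_{m+1}$ the single determinant suffices.

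\textbf{Main obstacle.} The routine parts (the determinant of $L(S(m),X)$, and $\nu_2=2$) are easy; the real bookkeeping is in the middle claim — showing that among all $2$-matchings of size $2+k$ the \emph{minimal} ones yield exactly the square-free degree-$k$ monomials in the leaf variables and nothing else, i.e. that no generator with a loop at $r$ (or with fewer than $k$ leaf-loops) is needed. I expect to handle this by explicitly listing the minimal $2$-matchings of $S(m)^\ell$ of each size using Proposition~\ref{minmax} (maximal $2$-matchings of induced subgraphs $S(m)[N(\mathcal{M})]$ are just single edges or two-edge paths through $r$), and then checking the reduction of the remaining generators, which is where the argument of Theorem~\ref{TeoRed} does the work.
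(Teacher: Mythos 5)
Your proposal is correct and follows essentially the same route as the paper: compute $\nu_2(S(m))=2$ and apply Theorem~\ref{TeoTreeGamma}, characterize $\mathcal{V}_2^*(S(m)^\ell,k+2)$ as $k$ leaf-loops together with a two-edge path through $r$ (so Theorem~\ref{TeoRed} and Lemma~\ref{Lemmarel} yield exactly the square-free degree-$k$ leaf monomials), and obtain $I_{m+1}$ from the determinant of $L(S(m),X)$, which the paper computes via Lemma~\ref{TS} with $S=E(S(m))$ — the same matching-expansion calculation you propose. The only cosmetic difference is that generators with a loop at $r$ need no separate reduction argument: such $2$-matchings are simply not minimal, so Theorem~\ref{TeoRed} never produces them.
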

\begin{proof}
It is pretty clear that $\nu_2(S(m))=\gamma(S(m))=2$ and since $|V(S(m))|=m+1$, there are $m-1$ non-trivial critical ideals of $S(m)$.
Moreover, is not difficult to see that for each $1\leq k\leq m-2$,
\begin{equation*}
\mathcal{V}_2^*(S(m)^\ell,k+2)=\left\{ \{j_sj_s\}_{s=1}^k \cup\{p_1r,r{p_2}\}\, \left|
\begin{array}{l}
1\leq p_1<p_2\leq m\text{ and for all} 1\leq s\leq k,\\
j_s\in \{1,\ldots,m\}\setminus\{p_1,p_2\}\end{array}\right.\right\}.
\end{equation*}
Thus, a straightforward application of Theorem~\ref{TeoRed} gives the result about $I_{2+k}(S(m),X)$. Finally, for $I_{m+1}(S(m),X)=\langle \det(S(m),X)\rangle$ we use Theorem~\ref{TS} with $S=E(S(m))$.
\end{proof}

Now, we continue, passing to the trees with depth two.
Let $s\geq 2$ and $T=T_2(m_1,\ldots,m_s)$ be the tree of depth two with $r$ as the root and $s$ branches with $m_i$ leaves each, see Figure~\ref{dep2}.
Note that $T_2(\emptyset)$ consists only of the root.
If $m_i\geq2$ for all $1\leq i\leq s$, then it is not difficult to see that $\nu_2(T)=2s$.
Since $n=|V(T)|=1+s+\sum_{i=1}^s m_i$, then $T$ has $n-2s=\sum_{i=1}^s m_i-s+1$ non-trivial critical ideals. 

In order to describe the non-trivial critical ideals, we need to characterize the minimal 2-matchings of $T^\ell$. 
Before doing this, we introduce some notation. Let $1,\ldots,s$ be the children of the root $r$ of $T$. 
For each $1\leq t\leq s$, let $t_1,\ldots,t_{m_t}$ be the children of $t$, let $S_t$ be the $t$ branch of $T$, that is, 
the star induced by the vertices $\{t, t_1,\ldots, t_{m_t}\}$  (see Figure~\ref{dep2}), and let $V_t$ denote a subset of $\{t_1,\ldots,t_{m_t}\}$. 
We use $P(u,v)$ to denote the edges of the path joining the vertices $u$ and $v$ in $T$.

\begin{Lemma}\label{T2mincar}
If $T=T_2(m_1,\ldots,m_s)$ and $\mathcal{M}\!\in\!\mathcal{V}_2^*(T^\ell,2s+l)$ with $l\geq 1$,
then
\[
\mathcal{M}=\begin{cases}
\begin{array}{l}
P({i_1}_{j_1},{i_2}_{j_2})\cup \displaystyle{\bigcup_{p=3}^{s} P({i_p}_{j_p},{i_p}_{k_p})}\cup V_{i_1}^\ell \cup\cdots\cup V_{i_s}^\ell
\end{array}
&\begin{array}{l}
1\leq j_p,k_p\leq m_{i_p}\textrm{ and } j_p\neq k_p,\\
\textrm{for each }1\leq p\leq s,
\end{array}
\\
\quad\\
\begin{array}{l}
P({i_1}_{j_1},r)\cup \displaystyle{\bigcup_{p=2}^{s} P({i_p}_{j_p},{i_p}_{k_p})}\cup V_{i_1}^\ell \cup\cdots\cup V_{i_s}^\ell
\end{array}
&\begin{array}{l}
1\leq j_p,k_p\leq m_{i_p}\textrm{ and } j_p\neq k_p,\\
\textrm{for each }1\leq p\leq s,
\end{array}
\\
\quad\\
\begin{array}{l}
\displaystyle{\bigcup_{p=1}^{s} P({i_p}_{j_p},{i_p}_{k_p})}\cup V_{i_1}^\ell \cup\cdots\cup V_{i_s}^\ell
\end{array}
&\begin{array}{l}
1\leq j_p,k_p\leq m_{i_p}\textrm{ and } j_p\neq k_p,\\
\textrm{for each }1\leq p\leq s,
\end{array}
\\
\quad\\
\begin{array}{l}
\displaystyle{P({i_1}_{j_1},{i_2}_{j_2})}\cup \displaystyle{\bigcup_{p=3}^{q} P({i_p}_{j_p},{i_p}_{k_p})}\cup
V_{i_1}^\ell \cup\cdots\cup V_{i_q}^\ell\\
\cup S_{i_{q+1}}^\ell\cup\cdots \cup S_{i_{s}}^\ell
\end{array}
&\begin{array}{l}
1\leq j_p,k_p\leq m_{i_p} \textrm{ and } j_p\neq k_p,\\
\textrm{for each }2\leq q<s\textrm{ and }1\leq p\leq q,
\end{array}
\\
\quad\\
\begin{array}{l}
\displaystyle{\bigcup_{p=1}^{q} P({i_p}_{j_p},{i_p}_{k_p})} \cup\{rr\}\cup V_{i_1}^\ell \cup\cdots\cup V_{i_q}^\ell\\
\cup S_{i_{q+1}}^\ell\cup\cdots \cup S_{i_{s}}^\ell
\end{array}
&\begin{array}{l}
1\leq j_p,k_p\leq m_{i_p}\textrm{ and } j_p\neq k_p,\\
\textrm{for each }0\leq q<s\textrm{ and }1\leq p\leq q.
\end{array}
\end{cases}
\]
\end{Lemma}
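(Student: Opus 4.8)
The plan is to characterize the minimal $2$-matchings of $T^\ell$ of size $2s+l$ by analyzing, for each branch $S_t$, the restriction of $\mathcal{M}$ to that branch, and then piecing the local pictures together through the root. First I would recall from Proposition~\ref{minmax} and the discussion after Definition~\ref{defbas} that a minimal $2$-matching is built so that no loop can be traded for an edge without changing the size: concretely, if $vv\in\ell(\mathcal{M})$ and $vw\in E(T)$, then $w$ must already have degree two in $\mathcal{M}$, for otherwise adding $vw$ and removing $vv$ would produce a $2$-matching of the same size with strictly fewer loops. This ``no improving swap'' principle is the combinatorial engine behind the whole classification.

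Next I would restrict attention to a single branch $S_t$ (the star on $\{t,t_1,\dots,t_{m_t}\}$) together with the edge $rt$. Inside $S_t$, a $2$-matching is a union of paths, each path of length at most two and centered at $t$ (since $t$ is the only non-leaf of $S_t$); so $\mathcal{M}\cap E(S_t)$ contributes at most one path of the form $P(t_{j},t_{k})$ through $t$, or a single pendant edge $t t_j$, or an edge $rt$ incident to $t$, and all remaining leaves $t_i$ not covered must carry loops. Minimality inside the branch forces that whenever $t$ is \emph{not} saturated in $\mathcal{M}$ (degree $\le 1$ at $t$), the uncovered leaves get loops but $t$ itself being free would allow a swap — so $t$ is either the center of a $P(t_j,t_k)$, or used by $P(t_j,r)$ or $rr$-adjacent configurations, or $S_t$ is entirely ``loopified'', i.e. $S_t^\ell\subseteq\mathcal{M}$. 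This already produces the two regimes visible in the statement: branches in ``active'' form $P(i_{p_{j_p}},i_{p_{k_p}})\cup V_{i_p}^\ell$ (center saturated, some leaves looped), and branches in ``saturated-star'' form $S_{i_p}^\ell$.

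Then I would globalize at the root $r$. The edges at $r$ available to $\mathcal{M}$ are the $rt$'s; $r$ has degree at most two in $\mathcal{M}$, so at most two branches can ``reach up'' to $r$. If two branches do, their up-edges combine into a single path $P(i_{1_{j_1}},i_{2_{j_2}})$ through $r$; if exactly one does and $r$ is otherwise free, either $r$ carries a loop or the single up-edge extends a branch path to $P(i_{1_{j_1}},r)$ — and minimality at $r$ (no swap of $rr$ for $rt$) decides which; if none does, $r$ may carry a loop or, again by minimality, cannot be free unless every neighboring $t$ is saturated, giving the $\{rr\}$ cases. Cross-referencing the size bookkeeping ($l\ge1$ means at least one loop appears beyond the $2s$ ``free'' edge-slots, hence not all branches are in active-with-no-loop form, which is exactly why $\nu_2(T)=2s$ is not attained) pins down precisely the five families listed. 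For the converse — that each listed $\mathcal{M}$ is indeed minimal — I would invoke Proposition~\ref{minmax} applied to the induced subgraph $T[N_T(\mathcal{M})]$, checking in each of the five shapes that the edge-part is a maximal $2$-matching of $T$ restricted to the covered vertices, so that its loop-completion is forced to be minimal.

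The main obstacle I anticipate is the case analysis at the root interacting with Lemma~\ref{T-v}'s saturation conditions: deciding exactly when a configuration with $rr\in\mathcal{M}$ versus one with $rt\in\mathcal{M}$ is minimal requires tracking whether the children $t$ (or $w_j$) are saturated in their branches $S_t$, and conversely translating ``$r$ cannot be unsaturated'' into the branch-level conditions ``$t_i$ saturated in $S_{i}$''. Getting the quantifier structure right — which branch is allowed to be in the $P(\cdot,\cdot)$ form versus forced into $S^\ell$ form versus $V^\ell$ form, and how $q$ ranges — is where the bookkeeping is delicate, but it is driven entirely by the no-improving-swap principle plus Proposition~\ref{minmax}, so no genuinely new idea is needed beyond careful enumeration.
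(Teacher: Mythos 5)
Your overall plan (analyze each branch $S_t$ locally, then glue at the root, with minimality forcing saturation of the branch centers) is the same strategy as the paper's, but the engine you propose to drive it --- the ``no improving swap'' principle, i.e.\ that a loop $vv$ cannot be traded for a single adjacent edge $vw$ --- is genuinely too weak to establish the key structural claims, and this is where your argument has a real gap. Concretely, take $T=T_2(2,2,2)$ with root $r$, children $1,2,3$ and leaves $1_1,1_2,2_1,2_2,3_1,3_2$, and consider $\mathcal{M}=\{11,\,1_11_1,\,1_21_2,\,2_22_2,\,r2,\,22_1,\,r3\}$ of size $7=2s+1$. Every loop of $\mathcal{M}$ passes your swap test: the neighbours of $1$ are $r$ (degree $2$ via $r2,r3$) and the looped leaves $1_1,1_2$; the only neighbour of $1_1,1_2$ is the looped vertex $1$; the only neighbour of $2_2$ is $2$ (degree $2$ via $r2,22_1$). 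Yet $\mathcal{M}$ is not of any of the five listed shapes (the path through $r$ ends at the \emph{center} $3$, not at a leaf), and it is in fact not minimal: $\mathcal{M}'=\{11,\,1_11_1,\,1_21_2,\,2_12,\,2r,\,r3,\,33_1\}$ has the same size and $\ell(\mathcal{M}')\subsetneq\ell(\mathcal{M})$. Detecting this requires a \emph{global} re-routing of the edge part, not a single loop-for-adjacent-edge exchange. So the deductions you base solely on the swap principle --- that non-loopified branch centers are saturated, that the path through $r$ (if any) ends at leaves, and hence the whole case analysis at the root --- do not follow from it. What minimality actually gives, and what is needed here (and what the paper is implicitly using when it asserts the degree-$2$ conditions and that $E(\mathcal{M})$ is a maximum $2$-matching), is the stronger statement that $E(\mathcal{M})$ must be a maximum $2$-matching of $T$ minus the looped vertices: otherwise replace $E(\mathcal{M})$ by a larger $2$-matching avoiding $V(\ell(\mathcal{M}))$, delete one loop, and trim edges to restore the size, producing a same-size $2$-matching with a strictly smaller loop set. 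Your proof needs this exchange lemma (or an equivalent), and the case analysis should be rebuilt on it.

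Two secondary problems. First, your local description asserts that ``all remaining leaves not covered must carry loops''; this is false and contradicts the statement itself, where the loops on a branch form an arbitrary subset $V_{i_p}^\ell$ of the unmatched leaves (their total number is dictated by $l$, and uncovered loop-free leaves are allowed, exactly because no swap is available at a vertex that carries no loop). Second, your converse argument via Proposition~\ref{minmax} only applies when \emph{every} vertex outside $V(\mathcal{M}\setminus\ell(\mathcal{M}))$ receives a loop, so it does not certify minimality of the listed configurations in which only a proper subset $V_{i_p}$ of the free leaves is looped; note also that the lemma as stated only claims the forward inclusion, which is the direction the paper proves.
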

\begin{proof}
Let $I=V(\ell(\mathcal{M}))\cap \{1,\ldots,s\}$.
If $I=\emptyset$, then the minimality of $\mathcal{M}$ implies that the degree on each of the vertices $1,\ldots,s$ is $2$.
Thus, $E(\mathcal{M})$ is a maximum $2$-matching of $T$, so there are $1\leq j_p,k_p\leq m_{i_p}$ with $j_p\neq k_p$ such that
\[
E(\mathcal{M})=
P({i_1}_{j_1},{i_2}_{j_2})\cup \displaystyle{\bigcup_{p=3}^{s} P({i_p}_{j_p},{i_p}_{k_p})},\ 
P({i_1}_{j_1},r)\cup \displaystyle{\bigcup_{p=2}^{s} P({i_p}_{j_p},{i_p}_{k_p})}\textrm{ or }
\displaystyle{\bigcup_{p=1}^{s} P({i_p}_{j_p},{i_p}_{k_p})}.
\]

In the first two cases $\ell(\mathcal{M})\subseteq V_{1}^\ell \cup\cdots\cup V_{s}^\ell$ and in the third one $\ell(\mathcal{M})\subseteq \{rr\}\cup V_{1}^\ell \cup\cdots\cup V_{s}^\ell$.

If $I\neq\emptyset$, the minimality of $\mathcal{M}$ implies that $\mathcal{M}$ has degree degree $2$ in $r$. Thus, if $rr\not\in\ell(\mathcal{M})$, then there exist $i_1,i_2\in \{1,\ldots,n\}$ such that $i_1r,ri_2\in E(\mathcal{M})$, and since $\mathcal{M}$ is minimal, the degree of each of the vertices $1,\ldots,s$ must also be $2$. This ensures that there exists $2\leq q<s$ such that
\[
E(\mathcal{M})=P({i_1}_{j_1},{i_2}_{j_2})\cup \bigcup_{p=3}^{q} P({i_p}_{j_p},{i_p}_{k_p}),
\]

and thus $V(\ell(\mathcal{M}))=V_{i_1}^\ell \cup\cdots\cup V_{i_q}^\ell\cup S_{i_{q+1}}^\ell\cup\cdots \cup S_{i_{s}}^\ell$.
\medskip

Finally, if $rr\in\ell(\mathcal{M})$, similar arguments show that $E(\mathcal{M})=\bigcup_{p=1}^{q} P({i_p}_{j_p},{i_p}_{k_p})$ for some $0\leq q<s$. Thus, $\ell(\mathcal{M})=V_{i_1}^\ell \cup\cdots\cup V_{i_q}^\ell\cup S_{i_{q+1}}^\ell\cup\cdots \cup S_{i_{s}}^\ell$.
\end{proof}

Lemma~\ref{T2mincar} gives us a complete description of the critical ideals of $T=T_2(m_1,\ldots,m_s)$.
For example, if $\mathcal{M}\in\mathcal{V}_2^{*}(T,2s+1)$, then $|E(\mathcal{M})|=2s$ and $ \ell(\mathcal{M})\in V_i^\ell$ for some $i$, $1\leq i\leq s$. Thus,
\[
I_{2s+1}(T,X)=\langle x_v\,|\,v\textrm{ is a leaf of }T\rangle
\]

In what follows, we give a description of the critical ideals of some trees of depth two with tree branches.
\begin{figure}[ht]
\begin{center}
\begin{tikzpicture}
\draw (0,0) {
	(0,0) node[draw, circle, fill=gray, inner sep=0pt, minimum width=4pt] (r) {}
	
	(-3.00,-1) node[draw, circle, fill=gray, inner sep=0pt, minimum width=4pt] (v1) {}
	(-0.00,-1) node[draw, circle, fill=gray, inner sep=0pt, minimum width=4pt] (v2) {}
	( 3.00,-1) node[draw, circle, fill=gray, inner sep=0pt, minimum width=4pt] (v3) {}
	
	(-3.75,-2) node[draw, circle, fill=gray, inner sep=0pt, minimum width=4pt] (v11) {}
	(-3.25,-2) node[draw, circle, fill=gray, inner sep=0pt, minimum width=4pt] (v12) {}
	(-3.00,-2) node[draw, circle, fill=gray, inner sep=0pt, minimum width=1pt] () {}
	(-2.75,-2) node[draw, circle, fill=gray, inner sep=0pt, minimum width=1pt] () {}
	(-2.50,-2) node[draw, circle, fill=gray, inner sep=0pt, minimum width=1pt] () {}
	(-2.25,-2) node[draw, circle, fill=gray, inner sep=0pt, minimum width=4pt] (v1m) {}
	
	(-0.75,-2) node[draw, circle, fill=gray, inner sep=0pt, minimum width=4pt] (v21) {}
	(-0.25,-2) node[draw, circle, fill=gray, inner sep=0pt, minimum width=4pt] (v22) {}
	(0.00,-2) node[draw, circle, fill=gray, inner sep=0pt, minimum width=1pt] () {}
	(0.25,-2) node[draw, circle, fill=gray, inner sep=0pt, minimum width=1pt] () {}
	( 0.50,-2) node[draw, circle, fill=gray, inner sep=0pt, minimum width=1pt] () {}
	( 0.75,-2) node[draw, circle, fill=gray, inner sep=0pt, minimum width=4pt] (v2m) {}
	
	( 2.25,-2) node[draw, circle, fill=gray, inner sep=0pt, minimum width=4pt] (v31) {}
	( 2.75,-2) node[draw, circle, fill=gray, inner sep=0pt, minimum width=4pt] (v32) {}
	( 3.00,-2) node[draw, circle, fill=gray, inner sep=0pt, minimum width=1pt] () {}
	( 3.25,-2) node[draw, circle, fill=gray, inner sep=0pt, minimum width=1pt] () {}
	( 3.50,-2) node[draw, circle, fill=gray, inner sep=0pt, minimum width=1pt] () {}
	( 3.75,-2) node[draw, circle, fill=gray, inner sep=0pt, minimum width=4pt] (v3m) {}
	
	(r) to (v1) (r) to (v2) (r) to (v3)
	(v1) to (v11) (v1) to (v12) (v1) to (v1m)
	(v2) to (v21) (v2) to (v22) (v2) to (v2m)
	(v3) to (v31) (v3) to (v32) (v3) to (v3m)
	
	(r)+(0,0.2) node {$r$}
	(v1)+(-0.2,0.2) node {\small $1$}
	(v2)+(-0.2,0.2) node {\small $2$}
	(v3)+(0.2,0.2) node {\small $3$}
	(v11)+(-0.2,-0.3) node {\footnotesize $1_1$}
	(v12)+(0.0,-0.3) node {\footnotesize $1_2$}
	(v1m)+(0.1,-0.3) node {\footnotesize $1_{m_1}$}
	(v21)+(-0.2,-0.3) node {\footnotesize $2_1$}
	(v22)+(0.0,-0.3) node {\footnotesize $2_2$}
	(v2m)+(0.1,-0.3) node {\footnotesize $2_{m_2}$}
	(v31)+(-0.2,-0.3) node {\footnotesize $3_1$}
	(v32)+(0.0,-0.3) node {\footnotesize $3_2$}
	(v3m)+(0.1,-0.3) node {\footnotesize $3_{m_3}$}
		};
\end{tikzpicture}
\caption{The tree $T_2(m_1,m_2,m_3)$.}\label{dep2}
\end{center}
\end{figure}
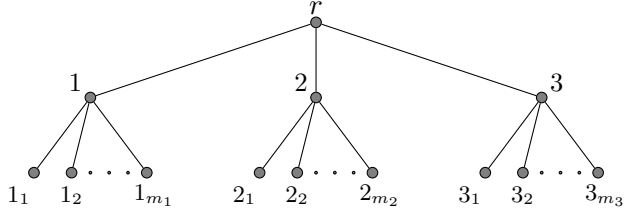

According to Proposition~\ref{T2mincar}, the critical ideals of $T_2(m_1,m_2,m_3)$ have two types of generators: 
monomials and products of a monomial with the determinant of a tree of depth one. 
Thus, for each $I\subseteq \{1,2,3\}$, let $Q_I=\det(L(\sqcup_{i\in I}S_i,X))$. 
Also, let 
\[
P_{r,s,t}^i\!=\!\left\{ \prod_{l=1}^{r_1} x_{1_{i_l}}\!\cdot\! \prod_{l=1}^{s_1} x_{2_{j_l}}\!\cdot\! \prod_{l=1}^{t_1} x_{3_{k_l}} \,\left|
\begin{array}{l}
1\!\leq\! i_1\! <\!\cdots \!<\! i_{r_1}\!\leq\! m_1,1\!\leq\! j_1\! <\!\cdots\! <\! j_{s_1}\!\leq m_2,\\
1\!\leq\! k_1\! <\!\cdots\! <\! k_{t_1}\!\leq \! m_3,r_1\!\leq\! r,s_1\!\leq\! s, t_1\!\leq\! t,\ r_1+s_1+t_1=i.
\end{array}\right.\!\!\!\right\}
\]
\noindent for all $i,r,s,t\geq0$.
Moreover, by convention $P_{r,s,t}^i=\emptyset$ when either $i$, $r$, $s$ or $t$ is negative.

\begin{Example}
Let $T=T_2(3,4,5)$ be the tree with three branches, the first one with three leaves, the second one with four leaves, and the third one with five leaves. 
Since $n=|V(T)|=16$ and $\nu_2(T)=6$, then $T$ has $10$ non-trivial critical ideals. 
Furthermore, by Theorem~\ref{TeoRed} and Proposition~\ref{T2mincar},
\[
I_{6+i}(T,X)\!=\!\begin{cases}
\displaystyle{\langle x_r\cdot P_{1,2,3}^{i-1},P_{2,3,4}^i, P_{0,2,3}^{i-3}\cdot Q_1,P_{1,0,3}^{i-4}\cdot Q_2, 
P_{1,2,0}^{i-5}\cdot Q_3, P_{0,0,3}^{i-6}\cdot Q_{1,2}, P_{0,2,0}^{i-7}\cdot Q_{1,3}\rangle}&\textrm{if }1\leq i\leq 7,\\
\\
\displaystyle{\langle P_{2,3,4}^8, P_{0,2,3}^5\cdot Q_1, P_{1,0,3}^4\cdot Q_2, P_{1,2,0}^3\cdot Q_3, P_{0,0,3}^2\cdot Q_{1,2}, P_{0,2,0}^1\cdot Q_{1,3}, Q_{2,3}\rangle}&\textrm{if }i=8,\\
\\
\displaystyle{\langle P_{0,0,3}^3\cdot Q_{1,2}, P_{0,2,0}^2\cdot Q_{1,3}, P_{1,0,0}^1\cdot Q_{2,3}\rangle}&\textrm{if }i=9.\\
\end{cases}
\]

Also, let $T=T_2(2,2,m)$ be the tree of depth two with three branches, the first two with $2$ leaves and the third one with $m$ leaves.
Since $n=|V(T)|=m+8$ and $\nu_2(T)=6$, then $T$ has $m+2$ non-trivial critical ideals.  
By Theorem~\ref{TeoRed} and Proposition~\ref{T2mincar},
\[
I_{6+i}(T,X)\!=\!
\begin{cases}
\displaystyle{\langle x_r\cdot P_{0,0,i-1}^{i-1}, P_{1,1,m-1}^i, P_{0,0,i-2}^{i-2}\cdot \{Q_1, Q_2\}, P_{0,0,i-3}^{i-3}\cdot Q_{1,2} \rangle} &\text{ if } 1\leq i\leq m-1,\\
\\
\displaystyle{\langle P_{1,1,m-1}^{m}, P_{0,0,m-2}^{m-2}\cdot \{Q_1, Q_2\}, P_{0,0,m-3}^{m-3}\cdot Q_{1,2}, Q_3 \rangle} &\text{ if } i=m,\\
\\
\langle P_{0,0,m-2}^{m-2}\cdot Q_{1,2}, P_{1,1,0}^1\cdot Q_3\rangle &\text{ if } i=m+1,
\end{cases}
\]
\end{Example}


\subsection{Wired d-regular trees}
A wired tree is a graph obtained from a tree by collapsing its leaves to a single vertex.
This term was introduced by Levine in~\cite{Levine}.
The critical group of a wired tree obtained from a regular tree and some variants of them have been studied in~\cite{Levine, Shen11, Toumpakary}. 

For $d\geq 3$, let $T_{d,h}$ be the rooted $d$-regular tree of depth $h$ and $\overline{T}_{d,h}$ 
the tree obtained from $T_{d,h}$ by deleting one of its principal branches.
In other words, $\overline{T}_{d,h}$ is a tree of depth $h$ in which each non-leaf vertex has $d-1$ children or successors. 
For instance, $T_{d,1}$ is a star with $d$ edges.
Now, let $WT_{d,h}$ be the wired tree obtained from $T_{d,h}$, let $W\overline{T}_{d,h}$ be the wired tree obtained from $\overline{T}_{d,h}$, 
and let $v$ be the new vertex obtained by collapsing its leaves.
Also, let $W\overline{T}'_{d,h}$ be the graph obtained from $W\overline{T}_{d,h}$ by adding an edge between $v$ and its root.

In~\cite{Levine}, Levine described completely the critical group of $W\overline{T}'_{d,h}$.
Also, in~\cite{Toumpakary}, Toumparaky studied the critical group of $WT_{d,h}$.
She calculated its rank, exponent and order.
Since $WT_{d,h}\setminus v$ is equal to $T_{d,h-1}$ and $W\overline{T}'_{d,h}\setminus v$ is equal to $\overline{T}_{d,h-1}$, then
applying~\cite[Proposition 3.7]{Corrales} we have that the critical groups of $WT_{d,h}$ and $W\overline{T}'_{d,h}$ 
can be determined as the evaluation of the critical ideals of $T_{d,h-1}$ and $\overline{T}_{d,h-1}$ on $x_i=d$ respectively.
Unfortunately, is difficult to compute explicitly the critical ideals of $T_{d,h-1}$ and $\overline{T}_{d,h-1}$ and their evaluations. 
However, we can extract some information from them.
For instance, it is not difficult to calculate the order of the critical groups of $WT_{d,h}$ and $W\overline{T}'_{d,h}$.
Before doing that we will introduce some notation.

Let $P_h(x,y)\in \mathbb{Z}[x,y]$ be defined recursively by
\[
P_h(x,y)=xP_{h-1}(x,y)-yP_{h-2}(x,y)
\]
with $P_{-1}(x,y)=1$ and $P_0(x,y)=x$.
It is not difficult to check that $P_1(x,y)=x^2-y$, $P_2(x,y)=x^3-2xy$ and $P_3(x,y)=x^4-3x^2y+y^2$.
Moreover, $P_h(x,y)$ has the following properties.

\begin{Proposition}\label{properties}
If $h\geq 0$, then
\begin{description}
\item[(1)] $P_h(x,y)=\sum_{i=0}^{\lfloor \frac{h+1}{2}\rfloor} (-1)^i\binom{h+1-i}{i} x^{h+1-2i}y^i$
\item[(2)] $P_h(a+1,a)=\sum_{i=0}^{h+1} a^i$.
\end{description}
\end{Proposition}
\begin{proof}
This follows easily using induction on $h$.
\end{proof}

Using these polynomials, we get the following expression for the determinant of the generalized Laplacian matrix $\overline{T}_{d,h}$ evaluated at $x_i=x$ for all $i$.

\begin{Proposition}\label{det1}
If $h\geq 0$ and $a=d-1\geq 2$, then
\[
\mathrm{det}(L(\overline{T}_{d,h},X)|_{x_i=x})=x\cdot \mathrm{det}(L(\overline{T}_{d,h-1},X)|_{x_i=x})^a-a\cdot \mathrm{det}(L(\overline{T}_{d,h-2},X)|_{x_i=x})^a\mathrm{det}(L(\overline{T}_{d,h-1},X)|_{x_i=x})^{a-1}.
\]
Moreover, $\mathrm{det}(L(\overline{T}_{d,h},X)|_{x_i=x})=P_h(x,a)\prod_{i=0}^{h-1} P_i(x,a)^{a^{h-i}-a^{h-i-1}}$.
\end{Proposition}
\begin{proof}
Let $r$ be the root of $\overline{T}_{d,h}$ and $\{v_1,\ldots,v_a\}$ its children.
Also, given a vertex $v$  of $\overline{T}_{d,h}$, let $\overline{T}^v_{d,h}$ be the subtree of $\overline{T}_{d,h}$ rooted in $v$.
For instance, $\overline{T}^{v_i}_{d,h}$ is isomorphic to $\overline{T}_{d,h-1}$.
Since $\overline{T}_{d,h}$ has no cycles, then using the expression for the determinant of a generalized Laplacian matrix given in~\cite[Proposition 4.1]{Corrales} 
(see also~\cite[Lemma 4.4]{Corrales} for an expression in the special case of trees, as in our case) we get that
{\small
\begin{eqnarray*}
\mathrm{det}(L(\overline{T}_{d,h},X)|_{x_i=x})&\!\!\!\!=\!\!\!\!&\sum_{\mu \in \mathcal{V}_1(\overline{T}_{d,h})} \!\!\!\!\!\!\!\!(-1)^{|\mu|}x^{[a]_h-2|\mu|}=
\sum_{rv_1,\ldots,rv_a\not\in \mu\in \mathcal{V}_1(\overline{T}_{d,h})} \!\!\!\!\!\!\!\!\!\!\!\!\!\!\!\!(-1)^{|\mu|}x^{[a]_h-2|\mu|} + \sum_{i=1}^a\sum_{rv_i\in \mu\in \mathcal{V}_1(\overline{T}_{d,h})} \!\!\!\!\!\!\!\!\!\!\!\!(-1)^{|\mu|}x^{[a]_h-2|\mu|}\\
&\!\!\!\!=\!\!\!\!& x\cdot \prod_{j=1}^a \sum_{\mu\in \mathcal{V}_1(\overline{T}^{v_j}_{d,h})} \!\!\!\!\!\!\!\!(-1)^{|\mu|}x^{[a]_{h-1}-2|\mu|}\\
&& - \sum_{i=1}^a(\prod_{u\in \mathrm{Ch}(v_i)} \sum_{\mu\in \mathcal{V}_1(\overline{T}^{u}_{d,h})} \!\!\!\!\!\!\!\!(-1)^{|\mu|}x^{[a]_{h-2}-2|\mu|}\cdot \prod_{j\neq i} \sum_{\mu\in \mathcal{V}_1(\overline{T}^{v_i}_{d,h})} \!\!\!\!\!\!\!\!(-1)^{|\mu|}x^{[a]_{h-1}-2|\mu|}) \\
&\!\!\!\!=\!\!\!\!& x\cdot \mathrm{det}(L(\overline{T}_{d,h-1},X)|_{x_i=x})^a-a\cdot \mathrm{det}(L(\overline{T}_{d,h-2},X)|_{x_i=x})^a\mathrm{det}(L(\overline{T}_{d,h-1},X)|_{x_i=x})^{a-1},
\end{eqnarray*}
}
where $\mathcal{V}_1(G)$ is the set of matchings of $G$, $[a]_h=\sum_{i=0}^h a^i$ is the number of vertices of $\overline{T}_{d,h}$, 
and $\mathrm{Ch}(v)$ is the set of children of $v$.

Finally, using induction on $h$, the properties of the polynomials $P_h(x,y)$ and the last expression for the $\mathrm{det}(L(\overline{T}_{d,h},X)|_{x_i=x})$, we get that
\[
\mathrm{det}(L(\overline{T}_{d,h},X)|_{x_i=x})=P_h(x,a)\prod_{i=0}^{h-1} P_i(x,a)^{a^{h-i}-a^{h-i-1}}.
\]
\end{proof}

\begin{Remark}
In general, using \cite[Lemma 4.4]{Corrales}, we get that if $T$ is a tree, then
\[
\mathrm{det}(L(T,X)|_{x_i=x})=x^{|V(T)|-\nu_1(T)} p(x) \text{ for some } p(x)\in \mathbb{Z}[x], 
\]
where $\nu_1(T)$ is the maximum cardinality of any matching of $T$.
\end{Remark}

Using Proposition~\ref{det1} we can easily get the order of the critical groups of $W\overline{T}'_{d,h}$ and $WT_{d,h}$.

\begin{Corollary}[\cite{Levine} p. 2]\label{order1}
If $h\geq 1$ and $a=d-1\geq 2$, then the order of $K(W\overline{T}'_{d,h})$ is equal to
\[
(1+a)^{a^{h-1}-a^{h-2}} \cdots (1+a+\cdots +a^{h-1})^{a-1} (1+a+\cdots +a^{h}).
\]
\end{Corollary}
\begin{proof}
By ~\cite[Proposition 3.7]{Corrales}  we have that the order of $K(W\overline{T}'_{d,h})$ is equal to $\mathrm{det}(L(\overline{T}_{d,h-1},X)|_{x_i=a+1})$, 
which by Propositions~\ref{det1} and~\ref{properties} is equal to $(1+a)^{a^{h-1}-a^{h-2}} \cdots (1+a+\cdots +a^{h-1})^{a-1} (1+a+\cdots +a^{h})$.
\end{proof}

\begin{Corollary}[\cite{Toumpakary} Theorem 2.10]\label{order2}
If $h\geq 1$ and $a=d-1\geq 2$, then the order of $K(WT_{d,h})$ is equal to
\[
(1+a)a^{h-1}(1+\cdots +a^{h-1})^a\prod_{i=1}^{h-2}(1+\cdots +a^{i})^{a^{h-i}-a^{h-2-i}}.
\]
\end{Corollary}
\begin{proof}
Using arguments similar to those given in Proposition~\ref{det1}, we get that
{\small
\begin{eqnarray*}
\mathrm{det}(L(T_{d,h},X)|_{x_i=x})&=&x\cdot\mathrm{det}(L(\overline{T}_{d,h-1},X)|_{x_i=x})^{1+a}-(1+a)\mathrm{det}(L(\overline{T}_{d,h-2},X)|_{x_i=x})^a\mathrm{det}(L(\overline{T}_{d,h-1},X)|_{x_i=x})^a\\
&=&\mathrm{det}(L(\overline{T}_{d,h-1},X)|_{x_i=x})^a(x\cdot\mathrm{det}(L(\overline{T}_{d,h-1},X)|_{x_i=x})-(1+a)\mathrm{det}(L(\overline{T}_{d,h-2},X)|_{x_i=x})^a).
\end{eqnarray*}
}
Using that $\mathrm{det}(L(\overline{T}_{d,h-1},X)|_{x_i=1+a})=(1+\cdots +a^{h})\prod_{i=1}^{h-1}(1+\cdots +a^{i})^{a^{h-i}-a^{h-1-i}}$, we get that
{\small
\[
(1+a)\cdot\mathrm{det}(L(\overline{T}_{d,h-1},X)|_{x_i=1+a})-(a+1)\mathrm{det}(L(\overline{T}_{d,h-2},X)|_{x_i=1+a})^a = 
(1+a)a^{h}\prod_{i=1}^{h-1}(1+\cdots +a^{i})^{a^{h-i}-a^{h-1-i}}.
\]
}
Finally, since the order of $K(WT_{d,h})$ is equal to $\mathrm{det}(L(T_{d,h-1},X)|_{x_i=1+a})$, we get the result.
\end{proof}

\begin{Remark}
Proposition~\ref{det1} can be used to compute the order of the critical group of any graph $G$ 
such that $G\setminus v$ is equal to $\overline{T}_{d,h}$ for some $v\in V(G)$ and the number of 
edges between $v$ and the vertices in $V(G)\setminus v$ is such that $\mathrm{deg}_G(u)=t$ 
for some $t\in \mathbb{N}$ and for all $u\in V(G)\setminus v$.
\end{Remark}

Calculating the rank (the number of non-trivial invariant factors) of the critical group of $W\overline{T}'_{d,h}$ and $WT_{d,h}$ is a more complicated task.
We will work with a more general class of trees.
We say that a rooted tree $(T,r)$ is an $h$-tree if the distance between the root 
and any of its leaves is equal to $h$ and any non-leaf vertex has at least two children.
First we establish a property of the $2$-matching number of this class of trees.

\begin{Lemma}\label{v2totally}
If $(T,r)$ is an $h$-tree, then
\[
\nu_2(T)=\begin{cases}
\displaystyle{\sum_{u\in \mathrm{Ch}(r)} \nu_2(T_u)}&\textrm{if }h \textrm{ is even,}\\
\\
\displaystyle{2+\sum_{u\in \mathrm{Ch}(r)}\nu_2(T_u)}&\textrm{if }h\textrm{ is odd,}
\end{cases}\]
where $T_u$ is the subtree of $T$ rooted in $u$ and $\mathrm{Ch}(v)$ is the set of children of $v$.
Moreover, the root $r$ is saturated if and only if $h$ is odd.
\end{Lemma}
\begin{proof}
We will use induction on the depth of $T$.
First, if $h=1$, then $T$ is a star and clearly $\nu_2(T)=2=2+\sum_{u\in \mathrm{Ch}(r)}\nu_2(T_u)$ 
because the $2$-matching number of the graph with only one vertex is equal to zero.

Assume that the result is true for $h$ and we will prove for $h+1$. 
By Lemma~\ref{T-v}, 
\[
\sum_{u\in \mathrm{Ch}(r)} \nu_2(T_u) \leq \nu_2(T)\leq 2+\sum_{u\in \mathrm{Ch}(r)} \nu_2(T_u).
\]
We will divide the argument into two cases: $h+1$ odd and $h+1$ even.

Assume that $h+1$ is even.
Let $\mathcal{M}$ be a maximum $2$-matching of $T$, $\mathcal{M}_r$ the edges of $\mathcal{M}$ incident to $r$ 
and $\mathcal{M}_u=\mathcal{M}\cap T_u$ for any $u\in \mathrm{Ch}(r)$.
If $|\mathcal{M}_r|=0$, then clearly $|\mathcal{M}|=\sum_{u\in \mathrm{Ch}(r)} |\mathcal{M}_u|\leq \sum_{u\in \mathrm{Ch}(r)} \nu_2(T_u)$.
Now, if $|\mathcal{M}_r|=1$, let $w$ be the vertex connected to $r$ by an edge of $\mathcal{M}$.
Since $T_u$ satisfies the induction hypothesis, has depth equal to $h$, and $w$ has degree at most one on $\mathcal{M}_w$, 
it follows that $\mathcal{M}_w$ is not maximum on $T_w$ and
\[
|\mathcal{M}|=|\mathcal{M}_r|+\sum_{u\in \mathrm{Ch}(r)} |\mathcal{M}_u|\leq 
1+\nu_2(T_w)-1+\sum_{u\neq w\in \mathrm{Ch}(r)} \nu_2(T_u)=\sum_{u\in \mathrm{Ch}(r)} \nu_2(T_u).
\] 
The case in which $\mathcal{M}$ has degree $2$ on $r$ can be treated in the same way
and therefore $\nu_2(T)=\sum_{u\in \mathrm{Ch}(r)} \nu_2(T_u)$.
Moreover, $r$ is not saturated because if $\mathcal{M}_u$ is a maximum $2$-matching of $T_u$ 
for any $u\in \mathrm{Ch}(r)$, then $\bigcup_{u\in \mathrm{Ch}(r)} \mathcal{M}_u$ is a maximum $2$-matching of $T$.

Now, assume that $h+1$ is odd.
Since $T_u$ satisfies the induction hypothesis and has depth equal to $h$, 
then for any $u\in \mathrm{Ch}(r)$ there exists a maximum $2$-matching $\mathcal{M}_u$ of $T_u$ 
such that the degree of $u$ in $\mathcal{M}_u$ is less than or equal to one ($u$ is not saturated in $T_u$).
In this case it is clear that $\mathcal{M}=\bigcup_{u\in \mathrm{Ch}(r)} \mathcal{M}_u\cup \{ru_1,ru_2\}$ 
for any $u_1\neq u_2\in \mathrm{Ch}(r)$ is a $2$-matching of $T$ and therefore $|\mathcal{M}|=2+\sum_{u\in \mathrm{Ch}(r)} \nu_2(T_u)$.
Moreover, $r$ is saturated.
\end{proof}

Now, we present a lower bound for the $2$-matching number of $T$ as a function of the number of edges of a 2-matching of $T^{\ell}$ plus
twice the matching number of the subtree of $T$ induced by the vertices that have a loop in $\mathcal{M}$.

\begin{Lemma}\label{bound}
If $(T,r)$ is an $h$-tree and $\mathcal{M}\in \mathcal{V}_2(T^{\ell})$, then
\[
|e(\mathcal{M})|+2\nu_1(T[V(\ell(\mathcal{M}))]) \leq \nu_2(T).
\]
\end{Lemma}
\begin{proof}
We will use induction on the depth of $T$.
First, assume that $h=1$, that is, $T$ is a star.
If $\mathcal{M}$ has no edges, then $\mathcal{M}=\ell(\mathcal{M})$ and $\nu_1(T[V(\ell(\mathcal{M}))])\leq 1$.
Thus the result follows because $\gamma_{\mathbb{Z}}(T)=2$.
In the other case, $\nu_1(T[V(\ell(\mathcal{M}))])=0$ and the result follows because the number of edges in $\mathcal{M}$ is at most $\gamma_{\mathbb{Z}}(T)=2$.

Assume that the result is true for $h$, and we will prove for $h+1$. 
We will divide the proof into two cases: $r\in V(\ell(\mathcal{M}))$ and $r\in V(e(\mathcal{M}))$.

First, assume that $r\in V(\ell(\mathcal{M}))$.
If $r$ is not incident to an edge of a maximum matching of $T[V(\ell(\mathcal{M}))]$, then using the induction hypothesis
\[
|e(\mathcal{M})|+2\nu_1(T[V(\ell(\mathcal{M}))])= \sum_{u\in \mathrm{Ch}(r)} |e(\mathcal{M}_u)|+2\nu_1(T[V(\ell(\mathcal{M}_u))]) \leq \sum_{u\in \mathrm{Ch}(r)}\nu_2(T_u) \leq \nu_2(T),
\]
where $\mathcal{M}_u=\mathcal{M}\cap T_u$ and $T_u$ is the subtree of $T$ rooted in $u$.
Now, assume that $r$ is incident to an edge of a maximum matching of $T[V(\ell(\mathcal{M}))]$.
Let $w\in V(T)$ be such that $rw$ is an edge of a maximum matching of $T[V(\ell(\mathcal{M}))]$.
Thus using the induction hypothesis and Lemma~\ref{v2totally}
{\small
\begin{eqnarray*}
|e(\mathcal{M})|+2\nu_1(T[V(\ell(\mathcal{M}))])&=& 2+\!\!\!\!\!\!\!\!\sum_{w\neq u\in \mathrm{Ch}(r)} \!\!\!\!\!\!\!\! 
|e(\mathcal{M}_u)|+2\nu_1(T[V(\ell(\mathcal{M}_u))])
+ \sum_{u\in \mathrm{Ch}(w)} \!\!\!\!|e(\mathcal{M}_u)|+2\nu_1(T[V(\ell(\mathcal{M}_u))])\\
&\leq& 2+ \sum_{w\neq u\in \mathrm{Ch}(r)} \nu_2(T_u) + \sum_{u\in \mathrm{Ch}(w)} \nu_2(T_u) \overset{\mathrm{Lemma}~\ref{v2totally}}{=} \nu_2(T).
\end{eqnarray*}
}

Now, asumme that $r\in V(e(\mathcal{M}))$.
If $h$ is odd, then using the induction hypothesis and Lemma~\ref{v2totally}
{\small
\begin{eqnarray*}
|e(\mathcal{M})|+2\nu_1(T[V(\ell(\mathcal{M}))])&\leq& |e(\mathcal{M}_r)|+2\nu_1(T[V(\ell(\mathcal{M}_r))])
+ \sum_{u\in \mathrm{Ch}(r)} |e(\mathcal{M}_u)|+2\nu_1(T[V(\ell(\mathcal{M}_u))])\\
&\leq& 2+\sum_{u\in \mathrm{Ch}(r)}\nu_2(T_u) \overset{\mathrm{Lemma}~\ref{v2totally}}{=} \nu_2(T),
\end{eqnarray*}
}
where $T_r=T[\{r\}\cup \mathrm{Ch}(r)]$ and $\mathcal{M}_r= \mathcal{M}_r \cap T_r$.
In a similar way, if $h$ is even, then using the induction hypothesis and Lemma~\ref{v2totally}
{\small
\begin{eqnarray*}
|e(\mathcal{M})|+2\nu_1(T[V(\ell(\mathcal{M}))])&\leq& |e(\mathcal{M}_r)|+2\nu_1(T[V(\ell(\mathcal{M}_r))])
+ \sum_{u\in \mathrm{Ch}(r)} \sum_{v\in \mathrm{Ch}(u)} |e(\mathcal{M}_v)|+2\nu_1(T[V(\ell(\mathcal{M}_v))])\\
&\leq& \nu_2(T_r)+\sum_{u\in \mathrm{Ch}(r)} \sum_{v\in \mathrm{Ch}(u)} \nu_2(T_v) \overset{\mathrm{Lemma}~\ref{v2totally}}{\leq} \nu_2(T),
\end{eqnarray*}
}
where $T_r=T[\{r\}\cup \mathrm{Ch}(r) \bigcup \cup_{u\in \mathrm{Ch}(r)} \mathrm{Ch}(u)]$ and $\mathcal{M}_r= \mathcal{M}_r \cap T_r$.
Note that $\nu_2(T_r)=2|\mathrm{Ch}(r)|$.
\end{proof}

Directly from Lemma~\ref{bound} we get the following result.

\begin{Corollary}
If $(T,r)$ is an $h$-tree, then
\[
\nu_2(T)=2\nu_1(T).
\]
\end{Corollary}
\begin{proof}
Taking $\mathcal{M}$ equal to the $2$-matching composed by a loop in each vertex of $T$,
Theorem~\ref{bound} implies that $2\nu_1(T)\leq \nu_2(T)$.
The reverse inequality is valid in general.
\end{proof}

Moreover, in this case we can  get a partial description of the critical ideals of $T$ evaluated at $x_i=x$ for all $i$.
\begin{Theorem}\label{divx}
If $(T,r)$ is an $h$-tree, $1\leq i\leq |V(T)|-\gamma_{\mathbb{Z}}(T)$ and
\[
I_{\gamma_{\mathbb{Z}}(T)+i}(T)|_{x_i=x}=\left\langle p_1(x),\ldots, p_s(x)\right\rangle,
\] 
then $p_j(x)=x^iq_j(x)$ for some $q_j(x)\in\mathbb{Z}[x]$.
Moreover, if $|V(T)|\geq 4$, then $I_{\gamma_{\mathbb{Z}}(T)+1}(T)|_{x_i=x}=\left\langle x\right\rangle$.
\end{Theorem}
\begin{proof}
By Corollary~\ref{description1}, $I_j(T, X)=\langle d(\mathcal{M}, X) \, | \, \mathcal{M}\in\mathcal{V}_2(T^\ell, j)\rangle$.
Thus
\[
I_j(T, X)|_{x_i=x}=\langle d(\mathcal{M}, X)|_{x_i=x} \, | \, \mathcal{M}\in\mathcal{V}_2(T^\ell, j)\rangle.
\]
Now, let $\mathcal{M}\in\mathcal{V}_2(T^\ell,j)$, $\nu_1=\nu_1(T[V(\ell(\mathcal{M}))])$ and $p(x)=d(\mathcal{M}, X)|_{x_i=x}$.
By Lemma~\ref{Lemmarel} and~\cite[Lemma 4.4]{Corrales}
\[
p(x)=x^{|\ell(\mathcal{M})|-2\nu_1}q(x) \text{ for some }q(x)\in\mathbb{Z}[x].
\]
Finally, by Lemma~\ref{bound} and Theorem~\ref{TeoTreeGamma},  
$|\ell(\mathcal{M})|-2\nu_1\geq |\mathcal{M}|-\nu_2(T)=|\mathcal{M}|-\gamma_{\mathbb{Z}}(T)$
and we get that $p_j(x)=x^iq_j(x)$ for some $q_j(x)\in\mathbb{Z}[x]$.
Moreover, is not difficult to see that any $h$-tree $T$ with $|V(T)|\geq 4$ 
has a $2$-matching of $T^\ell$ of size $\nu_2(T)+1$ with a leaf as a loop and no other loop. 
\end{proof}

Using these results we can get the rank of the critical group of the following family of graphs.

\begin{Corollary}\label{rank}
Let $G$ be a graph and $v$ a vertex of $G$.
If $G\setminus v$ is an $h$-tree and $\mathrm{deg}_G(u)=\mathrm{deg}_G(w)$ for all $u\neq w \in V(G)\setminus v$, 
then the rank of the critical group of $G$ is equal to
\[
|V(G\setminus v)|-\gamma_{\mathbb{Z}}(G\setminus v).
\]
Moreover, its first invariant factor is equal to $\mathrm{deg}_G(u)$.
\end{Corollary}
\begin{proof}
This follows directly from Theorem~\ref{divx}.
\end{proof}

The next example shows how Theorem~\ref{divx} works when $T$ is equal to $\overline{T}_{3,3}$.
\begin{Example}\label{h3d3}
If $h=3$ and $d=3$, then is not difficult to check that $\nu_2(\overline{T}_{d,h})=10$ and 
\begin{eqnarray*}
I_{11}(\overline{T}_{3,3},X)|_{x_i=x}&=&\left\langle x\right\rangle,\\
I_{12}(\overline{T}_{3,3},X)|_{x_i=x}&=&\left\langle x^2\right\rangle,\\
I_{13}(\overline{T}_{3,3},X)|_{x_i=x}&=&\left\langle 2x^3, x^5\right\rangle,\\
I_{14}(\overline{T}_{3,3},X)|_{x_i=x}&=&\left\langle 4x^4(x^2-2), x^4(x^2-2)(x^2+2)\right\rangle,\\
I_{15}(\overline{T}_{3,3},X)|_{x_i=x}&=&\left\langle x^5(x^2-2)^2(x^2-4)(x^4-6x^2+4)\right\rangle.
\end{eqnarray*}
Note that in general the critical ideals evaluated at $x_i=x$ are not principal and $x^j$ divides the generators of $I_{10+j}(\overline{T}_{3,3},X)|_{x_i=x}$.

On the other hand, 
\[
I_{11}(\overline{T}_{3,3},X)=\left\langle x_8,x_9,x_{10},x_{11},x_{12},x_{13},x_{14},x_{15},x_1x_2x_3-x_2-x_3\right\rangle,
\]
which shows that evaluating the critical ideals at $x_i=x$ greatly simplifies the descriptions of the ideals.
\end{Example}

Is not difficult to find a tree such that the generators of its critical ideals are divided by $x$, as the next example shows.
\begin{Example}
Consider the tree $T$ given in Figure~\ref{ejemplo}.
\begin{figure}[h!]
\begin{center}
\begin{tikzpicture}
\draw {
	(234-90:1) node[draw, circle, fill=gray, inner sep=0pt, minimum width=4pt] (v1) {}
	(162-90:1) node[draw, circle, fill=gray, inner sep=0pt, minimum width=4pt] (v2) {}
	(90-90:1) node[draw, circle, fill=gray, inner sep=0pt, minimum width=4pt] (v3) {}
	(18-90:1) node[draw, circle, fill=gray, inner sep=0pt, minimum width=4pt] (v4) {}
	(-54-90:1) node[draw, circle, fill=gray, inner sep=0pt, minimum width=4pt] (v5) {}
	(2,0) node[draw, circle, fill=gray, inner sep=0pt, minimum width=4pt] (v6) {}
	
	(v1) to (v2) 
	(v2) to (v3) 
	(v3) to (v4) 
	(v4) to (v5) 
	(v3) to (v6) 
	
	(v1)+(0,0.2) node {$v_1$}
	(v2)+(0,0.2) node {$v_2$}
	(v3)+(0.2,0.2) node {$v_3$}
	(v4)+(0,-0.2) node {$v_4$}
	(v5)+(0,-0.2) node {$v_5$}
	(v6)+(0.2,0.2) node {$v_6$}
};
\end{tikzpicture}
\caption{A tree with six vertices.} 
\label{ejemplo}
\end{center}
\end{figure}
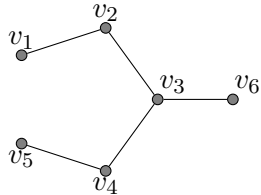

It is not difficult to check that $\nu_2(T)=4$ and 
\[
I_{5}(T,X)=\left\langle x_1x_2-1, x_4x_5-1,x_6\right\rangle.
\]
Thus $I_{5}(T,X)|_{x_i=x}=\left\langle 1\right\rangle$.
Moreover, it also can be checked that $I_{6}(T,X)|_{x_i=x}=\left\langle (x^2-1)(x^4-4x^2+1)\right\rangle$.
\end{Example}

By Corollary~\ref{rank}, in order to get the rank of $K(WT_{d,h})$ and $K(W\overline{T}'_{d,h})$,
we only need to compute their matching numbers.

\begin{Corollary}\label{mintnr}
If $h\geq 1$ and $a=d-1\geq 2$, then
\[
\nu_2(\overline{T}_{d,h})=\begin{cases}
\displaystyle{2\frac{a^{h+1}-a}{a^2-1}}&\textrm{if }h \textrm{ is even,}\\
\\
\displaystyle{2\frac{a^{h+1}-1}{a^2-1}}&\textrm{if }h\textrm{ is odd.}
\end{cases}\]
\end{Corollary}
\begin{proof}
This follows by Lemma~\ref{v2totally}.
\end{proof}

In a similar way.
\begin{Corollary}
If $h\geq 1$ and $a=d-1\geq 2$, then
\[
\nu_2(T_{d,h})=2\frac{a^h-1}{a-1}=2(1+a+\cdots+a^{h-1}).
\]
\end{Corollary}
\begin{proof}
Since $\nu_2(T_{d,h})=\nu_2(\overline{T}_{d,h})+\nu_2(\overline{T}_{d,h-1})$, the result follows from Corollary~\ref{mintnr}.
\end{proof}

Thus we get the rank of $K(WT_{d,h})$ and $K(W\overline{T}'_{d,h})$. 

\begin{Corollary}
If $h\geq 1$ and $a=d-1$, then the critical group of $W\overline{T}'_{d,h}$ has rank 
\[
\sum_{i=0}^{h-1}(-1)^ia^{h-1-i}.
\] 
Furthermore, its first non-trivial invariant factor is equal to $d$. 
\end{Corollary}
\begin{proof}
Using Corollary~\ref{mintnr} it is not difficult to check that
\[
\nu_2(\overline{T}_{d,h-1})=2\sum_{i=0}^{\lfloor \frac{h-2}{2}\rfloor}a^{h-2-2i}
\]
and therefore the rank of the critical group of $W\overline{T}'_{d,h}$ is equal to
\[
|V(\overline{T}_{d,h-1})|-\nu_2(\overline{T}_{d,h-1})=\sum_{i=0}^{h-1}(-1)^ia^{h-1-i}.
\]  
\end{proof}

\begin{Corollary}\label{CGTdh}
If $h\geq 1$ and $a=d-1$, then the critical group of $WT_{d,h}$ has rank $a^{h-1}$ and
its first non-trivial invariant factor is equal to $d$. 
\end{Corollary}
\begin{proof}
First, since $\nu_2(T_{d,h-1})=2(1+a+\cdots+a^{h-2})$ and
\[
|V(T_{d,h-1})|=1+d+d(d-1)+\cdots+d(d-1)^{h-2}=2+2a+\cdots+2a^{h-2}+a^{h-1},
\]
then the rank the critical group of $WT_{d,h}$ is equal to $|V(T_{d,h-1})|-\nu_2(T_{d,h-1})=a^{h-1}$.
\end{proof}

Finally, we present the critical ideals of $\overline{T}_{d,h}$ for $h=1$ and $h=2$.
\begin{Corollary}
If $a=d-1\geq 2$, then
\[
I_{j}(\overline{T}_{d,1},X)|_{x_i=x}=
\begin{cases}
\left\langle 1\right\rangle & \text{ if } 1\leq j \leq 2,\\
\left\langle x^{j-2}\right\rangle & \text{ if } 3\leq j \leq a,\\
\left\langle x^{a-1}(x^2-a)\right\rangle & \text{ if } j=a+1,
\end{cases}
\]
and
\[
I_{j}(\overline{T}_{d,2},X)|_{x_i=x}=
\begin{cases}
\left\langle 1\right\rangle & \text{ if } 1\leq j \leq 2a,\\
\left\langle x^{j-2a}\right\rangle & \text{ if } 2a+1\leq j \leq a^2+2,\\
\left\langle x^{j-2a} (x^2-a)^{j-a^2-2}\right\rangle & \text{ if } a^2+3\leq j \leq a^2+a,\\
\left\langle x^{a^2-a}(x^2-a)^{a-1}(x^3-2ax)\right\rangle & \text{ if }  j=a^2+a+1.
\end{cases}
\]
\end{Corollary}
As Example~\ref{h3d3} shows, the case for $h\geq 3$ is more complicated than these two previous cases.


\subsection{Arithmetical trees}
An \emph{arithmetical graph} is a triplet $(G,{\bf d},{\bf r})$ given by a graph $G$ and ${\bf d},{\bf r}\in\mathbb{Z}_+^{|V(G)|}$ 
such that $({\rm Diag}({\bf d})-A){\bf r}=0$, where $A$ is the adjacency matrix of $G$.
Any graph $G$ belongs to an arithmetical graph in a natural way, just taking ${\bf d}$ as its degree vector and ${\bf r}=(1,\ldots,1)^t$.
The matrix $M={\rm Diag}({\bf d})-A$ arises in algebraic geometry as an intersection 
matrix of degenerating curves, see~\cite{Lorenzini89, Lorenzini90} and the references contained there for more details.  

Given an arithmetical graph $(G,{\bf d},{\bf r})$, we define its critical group $K(G,{\bf d},{\bf r})$ (also called the group of components) 
as the torsion part of $\mathbb{Z}^{|V(G)|}/\textrm{Im}(M)$.
In~\cite{Lorenzini89}, Lorenzini proved that the $\mathbb{Z}$-rank of $K(G,{\bf d},{\bf r})$ is equal to $n-1$. 
Furthermore, if the Smith Normal Form of $M$ is $\textrm{diag}(f_1,\ldots,f_{n-1},0)$, then $K(G,{\bf d},{\bf r})=\mathbb{Z}_{f_1}\oplus\cdots\oplus\mathbb{Z}_{f_{n-1}}$. 
Since $M=L(G,{\bf d})$ and $\prod_{i=1}^j f_i$ is the greatest common divisor of the $j$-minors of $M$ for each $1\leq j \leq n-1$, 
it follows that $\langle \prod_{i=1}^j f_i \rangle$ is the generator of the $j$-critical ideal of $G$ evaluated at ${\bf d}$.

Thus, the invariant factors of $K(G,{\bf d},{\bf r})$ can be found as follows:
First, find a set of generators of the critical ideals of $G$. 
After that, we evaluate them at ${\bf d}$ and finally compute the greatest common divisor.
For instance, consider the family of arithmetical graphs associated to the reduction of elliptic curves of Kodaira type $I_n^*$. 
For any $m\in\mathbb{N}$, let $C_{5,m}$ be the tree obtained by identifying the center of a star with two leaves with each leaf of the path $P_{m+1}$, see Figure~\ref{C5n}. 
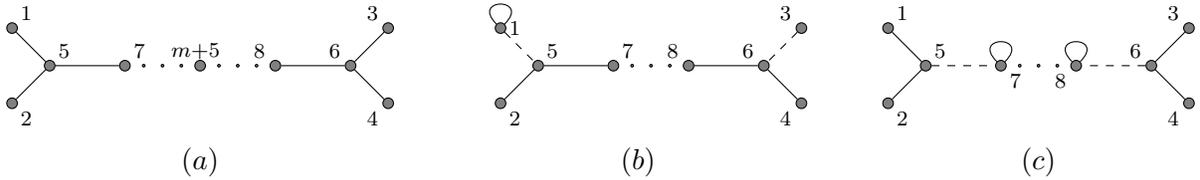
\begin{figure}[ht]
\begin{center}
\begin{tabular}{c@{\extracolsep{10mm}}c@{\extracolsep{10mm}}c}
\begin{tikzpicture}
\draw (0,0) {
	(-2,0.5) node[draw, circle, fill=gray, inner sep=0pt, minimum width=4pt] (v1) {}
	(-2,-0.5) node[draw, circle, fill=gray, inner sep=0pt, minimum width=4pt] (v2) {}
	(-1.5,0) node[draw, circle, fill=gray, inner sep=0pt, minimum width=4pt] (v5) {}
	(-0.5,0) node[draw, circle, fill=gray, inner sep=0pt, minimum width=4pt] (v7) {}
	(-0.25,0) node[draw, circle, fill=gray, inner sep=0pt, minimum width=1pt] () {}
	(0,0) node[draw, circle, fill=gray, inner sep=0pt, minimum width=1pt] () {}
	(0.25,0) node[draw, circle, fill=gray, inner sep=0pt, minimum width=1pt] () {}
	(0.5,0) node[draw, circle, fill=gray, inner sep=0pt, minimum width=4pt] (vm5) {}
	(0.75,0) node[draw, circle, fill=gray, inner sep=0pt, minimum width=1pt] () {}
	(1,0) node[draw, circle, fill=gray, inner sep=0pt, minimum width=1pt] () {}
	(1.25,0) node[draw, circle, fill=gray, inner sep=0pt, minimum width=1pt] () {}
	(1.5,0) node[draw, circle, fill=gray, inner sep=0pt, minimum width=4pt] (v8) {}
	(2.5,0) node[draw, circle, fill=gray, inner sep=0pt, minimum width=4pt] (v6) {}
	(3,-0.5) node[draw, circle, fill=gray, inner sep=0pt, minimum width=4pt] (v4) {}
	(3,0.5) node[draw, circle, fill=gray, inner sep=0pt, minimum width=4pt] (v3) {}
	
	(v1) to (v5) 
	(v2) to (v5) 
	(v5) to (v7) 
	(v3) to (v6) 
	(v4) to (v6) 
	(v6) to (v8)
	
	(v1)+(0.2,0.2) node {$_1$}
	(v2)+(0.2,-0.2) node {$_2$}
	(v3)+(-0.2,0.2) node {$_3$}
	(v4)+(-0.2,-0.2) node {$_4$}
	(v5)+(0.2,0.2) node {$_5$}
	(v6)+(-0.2,0.2) node {$_6$}
	(v7)+(0.2,0.2) node {$_7$}
	(v8)+(-0.2,0.2) node {$_8$}
	(vm5)+(-0.05,0.2) node {$_{m+5}$}
		};
\end{tikzpicture}
&
\begin{tikzpicture}[every loop/.style={}]
\draw (0,0) {
	(-2.,0.5) node[draw, circle, fill=gray, inner sep=0pt, minimum width=4pt] (v1) {}
	(-2,-0.5) node[draw, circle, fill=gray, inner sep=0pt, minimum width=4pt] (v2) {}
	(-1.5,0) node[draw, circle, fill=gray, inner sep=0pt, minimum width=4pt] (v5) {}
	(-0.5,0) node[draw, circle, fill=gray, inner sep=0pt, minimum width=4pt] (v7) {}
	(-0.25,0) node[draw, circle, fill=gray, inner sep=0pt, minimum width=1pt] () {}
	(0,0) node[draw, circle, fill=gray, inner sep=0pt, minimum width=1pt] () {}
	(0.25,0) node[draw, circle, fill=gray, inner sep=0pt, minimum width=1pt] () {}
	(0.5,0) node[draw, circle, fill=gray, inner sep=0pt, minimum width=4pt] (v8) {}
	(1.5,0) node[draw, circle, fill=gray, inner sep=0pt, minimum width=4pt] (v6) {}
	(2,-0.5) node[draw, circle, fill=gray, inner sep=0pt, minimum width=4pt] (v4) {}
	(2,0.5) node[draw, circle, fill=gray, inner sep=0pt, minimum width=4pt] (v3) {}
	
	(v1) edge[dashed] (v5)
	(v2) to (v5)
	(v5) to (v7) 
	(v3) edge[dashed] (v6)
	(v4) to (v6)
	(v6) to (v8)
	(v1) edge[loop] (v1)
	
	(v1)+(0.2,0.0) node {$_1$}
	(v2)+(0.2,-0.2) node {$_2$}
	(v3)+(-0.2,0.2) node {$_3$}
	(v4)+(-0.2,-0.2) node {$_4$}
	(v5)+(0.2,0.2) node {$_5$}
	(v6)+(-0.2,0.2) node {$_6$}
	(v7)+(0.2,0.2) node {$_7$}
	(v8)+(-0.2,0.2) node {$_8$}
	};
\end{tikzpicture}
&
\begin{tikzpicture}[every loop/.style={}]
\draw (0,0) {
	(-2,0.5) node[draw, circle, fill=gray, inner sep=0pt, minimum width=4pt] (v1) {}
	(-2,-0.5) node[draw, circle, fill=gray, inner sep=0pt, minimum width=4pt] (v2) {}
	(-1.5,0) node[draw, circle, fill=gray, inner sep=0pt, minimum width=4pt] (v5) {}
	(-0.5,0) node[draw, circle, fill=gray, inner sep=0pt, minimum width=4pt] (v7) {}
	(-0.25,0) node[draw, circle, fill=gray, inner sep=0pt, minimum width=1pt] () {}
	(0,0) node[draw, circle, fill=gray, inner sep=0pt, minimum width=1pt] () {}
	(0.25,0) node[draw, circle, fill=gray, inner sep=0pt, minimum width=1pt] () {}
	(0.5,0) node[draw, circle, fill=gray, inner sep=0pt, minimum width=4pt] (v8) {}
	(1.5,0) node[draw, circle, fill=gray, inner sep=0pt, minimum width=4pt] (v6) {}
	(2,-0.5) node[draw, circle, fill=gray, inner sep=0pt, minimum width=4pt] (v4) {}
	(2,0.5) node[draw, circle, fill=gray, inner sep=0pt, minimum width=4pt] (v3) {}
	
	(v1) to (v5) 
	(v2) to (v5) 
	(v5) edge[dashed] (v7) 
	(v3) to (v6) 
	(v4) to (v6) 
	(v6) edge[dashed] (v8)
	(v7) edge[loop] (v7)
	(v8) edge[loop] (v8)
	
	(v1)+(0.2,0.2) node {$_1$}
	(v2)+(0.2,-0.2) node {$_2$}
	(v3)+(-0.2,0.2) node {$_3$}
	(v4)+(-0.2,-0.2) node {$_4$}
	(v5)+(0.2,0.2) node {$_5$}
	(v6)+(-0.2,0.2) node {$_6$}
	(v7)+(0.2,-0.2) node {$_7$}
	(v8)+(-0.2,-0.2) node {$_8$}
	};
\end{tikzpicture}\\
$(a)$ & $(b)$ & $(c)$
\end{tabular}
\caption{The tree $C_{5,m}$ and the two types of $2$-matchings of size $m+3$.}
\label{C5n}
\end{center}
\end{figure}

Now, we will describe the critical ideals of $C_{5,m}$.
First, since $V(C_{5,m})\setminus \{v_1,v_3\}$ induces a path isomorphic to $P_{m+3}$, it follows that $\nu_2(C_{5,m})\geq m+2$.
Moreover, is not difficult to check that $\nu_2(C_{5,m})=m+2$.
Thus, by Theorem~\ref{TeoTreeGamma}, $\gamma(C_{5,m})=m+2$ and $C_{5,m}$ has only $3$ non-trivial critical ideals.
The $m+5$-critical ideal is the determinant of the generalized Laplacian matrix.
For simplicity, we will assume that $m\geq 5$.
By Proposition~\ref{paths} we get that
\[
I_{m+4}(C_{5,m},X)=\langle x_1x_3,x_1x_4,x_2x_3,x_2x_4,\mathcal{P}_{1,2}\mathcal{P}_{7,8}-x_1x_2\mathcal{P}_{9,8},\mathcal{P}_{3,4}\mathcal{P}_{7,8}-x_3x_4\mathcal{P}_{7,10}\rangle,
\]
where $\mathcal{P}_{i,j}=\det(P(v_i,v_j))$ and $P(v_i,v_j)$ is the unique path in $C_{5,m}$ that join the vertices $v_i$ and $v_j$.
Note that $\mathrm{det}(C_{5,m}\setminus P(v_3,v_4), X)=\mathcal{P}_{1,2}\mathcal{P}_{7,8}-x_1x_2\mathcal{P}_{9,8}$ 
and similarly in the case of $\mathrm{det}(C_{5,m}\setminus P(v_1,v_2),X)$.
Finally, in Figure \ref{C5n} are sketched the two types of minimal $2$-matchings of $C_{5,m}$ of size $m+3$.
Thus
\[
I_{m+3}(C_{5,m},X)=\langle x_1,x_2,x_3,x_4,\mathcal{P}_{7,8}\rangle.
\]

Now, taking ${\bf d}_{5,m}=(2,\ldots,2)$ and ${\bf r}_{5,m}=(1,1,1,1,2,\ldots,2)^t$ we get that $(C_{5,m},{\bf d}_{5,m},{\bf r}_{5,m})$ is an arithmetical graph
since $\gamma(C_{5,m})=m+2$, $f_{i}=1$ for all $1\leq i\leq m+2$.
On the other hand, using \cite[Corollary 4.5]{Corrales} we get that the polynomial $\mathcal{P}_{i,j}$ evaluated at $d=(2,\ldots, 2)$ 
is odd if and only if the path $P(v_i,v_j)$ has an even number of vertices and $\mathcal{P}_{1,2}$ and $\mathcal{P}_{3,4}$ evaluated at $(2,2,2)$ are equal to $4$.
Thus, $f_{m+3}=1$ when $m$ is odd, $f_{m+3}=2$ when $m$ is even.
Finally, since $f_{m+3}f_{m+4}=I_{m+4}(C_{5,m},(2,\ldots, 2))=4$, then
\[
K\big((C_{5,m},{\bf d}_{5,m},{\bf r}_{5,m})\big)=\begin{cases}
\mathbb{Z}_{2}^2&\textrm{if }m\textrm{ is even,}\\
\mathbb{Z}_{4}&\textrm{if }m\textrm{ is odd.}
\end{cases}
\]


\end{document}